\newcommand{\RNum}[1]{\uppercase\expandafter{\romannumeral #1\relax}}
\newcommand{\sS}{\mathscr{S}}
\newcommand{\resp}{\text{resp.}\xspace}
\newcommand{\Log}{{\operatorname{Log}}}
\newcommand{\loccit}{\textit{loc.\ cit.\ }}
\newcommand{\bG}{\mathbf{G}}
\newcommand{\bC}{\mathbf{C}}
\newcommand{\bGSp}{\mathbf{GSp}}
\newcommand{\bV}{\mathbf{V}}
\newcommand{\bE}{\mathbf{E}}
\newcommand{\bK}{\mathbf{K}}
\newcommand{\bZ}{\mathbf{Z}}
\newcommand{\bF}{\mathbf{F}}
\newcommand{\bh}{\mathbf{h}}
\newcommand{\rK}{\mathrm{K}}
\newcommand{\simtos}{\overset{\sim}{\ra}}
\newcommand{\bbf}{\mathbf f}
\newcommand{\GSpin}{{\operatorname{GSpin}}}
\DeclareMathOperator{\GU}{GU}
\newcommand{\sG}{\mathscr{G}}
\newcommand{\sB}{\mathscr{B}}
\newcommand{\BA}{{\mathbb {A}}} 
\newcommand{\BC}{{\mathbb {C}}} \newcommand{\BD}{{\mathbb {D}}}
\newcommand{\CD}{{\mathcal {D}}}
 \newcommand{\BF}{{\mathbb {F}}}
\newcommand{\BG}{{\mathbb {G}}} \newcommand{\BH}{{\mathbb {H}}}
\newcommand{\BI}{{\mathbb {I}}} 
\newcommand{\BM}{{\mathbb {M}}} 
\newcommand{\BN}{{\mathbb {N}}}
\newcommand{\BQ}{{\mathbb {Q}}} \newcommand{\BR}{{\mathbb {R}}}
\newcommand{\BS}{{\mathbb {S}}} 
 \newcommand{\BV}{{\mathbb {V}}}
\newcommand{\BW}{{\mathbb {W}}} 
 \newcommand{\BZ}{{\mathbb {Z}}}
\newcommand{\CA}{{\mathcal {A}}} \newcommand{\CB}{{\mathcal {B}}}
\newcommand{\CC}{{\mathcal {C}}} 
 \newcommand{\CF}{{\mathcal {F}}}
\newcommand{\CG}{{\mathcal {G}}} \newcommand{\CH}{{\mathcal {H}}}
\newcommand{\CO}{{\mathcal {O}}} \newcommand{\CP}{{\mathcal {P}}}
 \newcommand{\CT}{{\mathcal {T}}}
 \newcommand{\CV}{{\mathcal {V}}}
\newcommand{\RM}{{\operatorname {M}}} 
\newcommand{\RU}{{\operatorname {U}}}
\newcommand{\fa}{{\mathfrak{a}}}
\newcommand{\fm}{{\mathfrak{m}}}
\newcommand{\fM}{{\mathfrak{M}}}
\newcommand{\fS}{{\mathfrak{S}}}
\newcommand{\ab}{{\operatorname{ab}}}
\newcommand{\ad}{{\operatorname{ad}}}
\newcommand{\cris}{{\operatorname{cris}}}
\newcommand{\dR}{{\operatorname{dR}}}
\newcommand{\disc}{{\operatorname{disc}}}
\newcommand{\der}{{\operatorname{der}}}
\newcommand{\diag}{{\operatorname{diag}}}
\newcommand{\Frac}{\operatorname{Frac}}
\newcommand{\Gal}{{\operatorname{Gal}}} \newcommand{\GL}{{\operatorname{GL}}}
\newcommand{\GSp}{{\operatorname{GSp}}}
\newcommand{\Hom}{{\operatorname{Hom}}}
\newcommand{\Isom}{{\operatorname{Isom}}}
\newcommand{\Lie}[1]{\operatorname{Lie} #1} 
\newcommand{\loc}{{\operatorname{loc}}}
\renewcommand{\mod}{\operatorname{mod}}
\newcommand{\Res}{{\operatorname{Res}}}
\newcommand{\Sh}{{\operatorname{Sh}}}
\newcommand{\Spec}{\operatorname{Spec}} \newcommand{\Spf}{\operatorname{Spf}}
\newcommand{\tr}{\operatorname{tr}}
\newcommand{\univ}{{\operatorname{univ}}}
\newcommand{\ur}{{\operatorname{ur}}}
\newcommand{\wt}[1]{{\widetilde {#1}}}
\newcommand{\wh}[1]{{\widehat {#1}}}
\newcommand{\pair}[1]{\langle {#1} \rangle}
\newcommand{\ol}[1]{{\overline{#1}}}
\newcommand{\lra}{\longrightarrow}
\newcommand{\ra}{\rightarrow}
\newcommand{\ud}[1]{{\underline{#1}}}
\newcommand{\rbra}[1]{\left({#1}\right)}
\newcommand{\cbra}[1]{\left\{{#1}\right\}}
\newcommand{\tcbra}[1]{\{{#1}\}}
\def\varW@#1#2{%
\vtop{\m@th\ialign{##\cr
\hfil$#1 \operatorname{colim} $\hfil\cr
\noalign{\nointerlineskip\kern1.5\ex@}#2\cr
\noalign{\nointerlineskip\kern-\ex@}\cr}}
}
\def\colim{%
\mathop{\mathpalette\varW@{}}\nmlimits@
}\makeatother
\theoremstyle{plain}
\newtheorem{thm}{Theorem}[section] 
\newtheorem{corollary}[thm]{Corollary}
  \newtheorem{prop}[thm]{Proposition}
\newtheorem {conj}[thm]{Conjecture} 
\newtheorem{lemma}[thm]{Lemma}
\theoremstyle{remark} \newtheorem{remark}[thm]{Remark}
\theoremstyle{definition}  
\theoremstyle{definition} \newtheorem{example}[thm]{Example}  
\newtheorem{defn}[thm]{Definition}
\newcommand{\adeles}{ad\`{e}les\xspace}
\newcommand{\etale}{\'{e}tale\xspace}
\newcommand{\et}{\'{e}t\xspace}
\renewcommand{\et}{{\text{\'{e}t}}\xspace}
\newcommand{\Teichmuller}{Teichm\"{u}ller\xspace}
\newcommand{\Dieudonne}{Dieudonn\'{e}\xspace}
\numberwithin{equation}{section}
\newcommand{\inverse}{^{-1}}
\newcommand{\sset}{\subset}
\newcommand{\cross}{^\times}
\newcommand{\rk}{\operatorname{rk}}
\newcommand{\Tr}{\operatorname{Tr}}
\newcommand{\Fil}{\operatorname{Fil}}
\newcommand{\simto}{\overset{\sim}{\lra}}
\newcommand{\Gr}{\operatorname{Gr}}
\newcommand{\dfn}[1]{\emph{#1}}
\begin{document}
\title{$2$-adic integral models of some Shimura varieties with parahoric level structure}
\author{Jie Yang}
\address{Department of Mathematics, Michigan State University, 619 Red Cedar Road, East Lansing, MI 48824, USA}
\email{yangji79@msu.edu}

\maketitle

\begin{abstract}
	 We construct integral models over $p=2$ for some Shimura varieties of abelian type with parahoric level structure, extending the previous work of Kim-Madapusi \cite{kim20162} and Kisin, Pappas, and Zhou \cite{kisin2018integral, kisin2024independenceellfrobeniusconjugacy, kisin2024integralmodelsshimuravarieties}. For Shimura varieties of Hodge type, we show that our integral models are canonical in the sense of Pappas-Rapoport \cite{pappas2021p}.
\end{abstract}

\setcounter{tocdepth}{1}
\tableofcontents

\section{Introduction}

\subsection{Background}
Let $p$ be a prime number. Let $\BA_f$ denote the ring of finite \adeles over $\BQ$, and $\BA^p_f$ denote the ring of prime-to-$p$ finite \adeles over $\BQ$. Let $(\bG,X)$ be a Shimura datum. For a sufficiently small open compact subgroup $\rK\sset\bG(\BA_f)$, the corresponding Shimura variety $$\Sh_\rK(\bG,X)=\bG(\BQ)\backslash X\times \bG(\BA_f)/\rK$$
is naturally a quasi-projective smooth scheme over the reflex field $\bE=\bE(\bG,X)$. 

Let $v|p$ be a place of $\bE$ and $E$ be the completion of $\bE$ at $v$. Denote by $\CO_{\bE,(v)}$ the localization of $\CO_\bE$ at $v$. Denote by $k_E$ the residue field of $E$ and by $k$ the algebraic closure of $k_E$. Let $\CG$ be a Bruhat-Tits stabilizer group scheme (see \S \ref{subsec-localmod}) over $\BZ_p$ for $\bG_{\BQ_p}$ with neutral component $\CG^\circ$. Set $\rK_p\coloneqq \CG(\BZ_p)$ and $\rK_p^\circ\coloneqq \CG^\circ(\BZ_p)$. Suppose $\rK^\circ\sset\bG(\BA_f)$ is of the form $\rK^\circ=\rK_p^\circ\rK^p$, where $\rK^p\sset \bG(\BA_f^p)$ is a sufficiently small open compact subgroup.  

Suppose that $(\bG,X)$ is of abelian type. If $\rK^\circ_p$ is hyperspecial, Kisin \cite{kisin2010integral} and Kim-Madapusi \cite{kim20162} (when $p=2$) constructed (smooth) canonical integral models over $p$ of $\Sh_{\rK^\circ}(\bG,X)$, which are uniquely characterized by Milne's extension property. If $p>2$, Kisin, Pappas and Zhou \cite{kisin2024independenceellfrobeniusconjugacy, kisin2024integralmodelsshimuravarieties}, following earlier work of Kisin-Pappas \cite{kisin2018integral}, constructed integral models over $p$ of $\Sh_{\rK^\circ}(\bG,X)$ with arbitrary parahoric level structure.  Using the theory of $p$-adic shtukas, Pappas-Rapoport \cite{pappas2021p} and Daniels \cite{daniels2023canonical}  made the following conjecture about the existence of the canonical integral model of $\Sh_{\rK^\circ}(\bG,X)$ with parahoric level structure for any Shimura datum $(\bG,X)$.

\begin{conj}[{\cite[Conjecture 4.2.2]{pappas2021p},\cite[Conjecture 4.5]{daniels2023canonical}}] \label{conj}
	There exists a unique system $\{\sS_{\rK^\circ} \}_{\rK^p}$ of normal flat schemes over $\CO_E$, extending $\{\Sh_{\rK^\circ}(\bG,X)\}_{\rK^p}$ and equipped with a $p$-adic shtuka satisfying the axioms in \textit{loc. cit.}.
\end{conj}
By \cite[Theorem 1.3.2]{pappas2021p}, Conjecture \ref{conj} holds when $(\bG,X)$ is of Hodge type and $\rK_p^\circ$ is a stabilizer parahoric subgroup (i.e., $\rK_p=\rK_p^\circ$). Assuming the existence of $\sS_{\rK^\circ}$ as in Conjecture \ref{conj}, Pappas and Rapoport also conjectured (at least when $\bG$ satisfies  the blanket assumption in \cite[\S 4.1]{pappas2021p}) that $\sS_{\rK^\circ}$ fits into a scheme-theoretic local model diagram. Specifically, there should exist a diagram of $\CO_E$-schemes \begin{equation*} 
       \begin{gathered}
       	  \xymatrix{
       	   &\sS_{\rK^\circ} & \wt{\sS}_{\rK^\circ}\ar[l]_-{\pi}\ar[r]^-{q} &\BM^\loc_{\CG^\circ,\mu_h},
		    }
       \end{gathered}
	\end{equation*}
    where $\mu_h$ denotes the geometric cocharacter of $\bG_{\BQ_p}$ corresponding to the Hodge cocharacter attached to $(\bG,X)$, the $\CO_E$-scheme $\BM^\loc_{\CG^\circ,\mu_h}$ denotes the scheme local model used in \cite[\S 4.9.2]{pappas2021p}, $\pi$ is a $\CG^\circ$-torsor, and $q$ is $\CG^\circ$-equivariant and smooth of relative dimension $\dim\bG$, such that the compatibility conditions in \cite[Definition 4.9.1]{pappas2021p} are satisfied. For the current status of Conjecture \ref{conj}, we refer readers to Daniels-van Hoften-Kim-Zhang \cite{daniels2024conjecture} and Daniels-Youcis \cite{daniels2024canonical}, which build upon the work of Kisin, Pappas and Zhou \cite{kisin2018integral, kisin2024independenceellfrobeniusconjugacy, kisin2024integralmodelsshimuravarieties}.   
    In this paper, we extend these results to the case $p=2$ under some assumptions, which we now explain.

\subsection{Main results} \label{intro-results}
Assume $p=2$ and that $(\bG,X)$ is a Shimura datum of abelian type. We will construct $2$-adic integral models for $\Sh_{\rK^\circ}(\bG,X)$ under one of the following assumptions: 
    \begin{enumerate}
    	\item [(A)] $(\bG^\ad,X^\ad)$ has no factor of type $D^\BH$, $\bG_{\BQ_p}$ is unramified, and $\rK^\circ_p$ is contained in some hyperspecial subgroup;
    	\item [(B)] $\bG=\GU(n-1,1)$ is the unitary similitude group over $\BQ$ of signature $(n-1,1)$ for some odd integer $n\geq 3$, $\bG_{\BQ_p}$ is (wildly) ramified, and $\rK_p^\circ$ is a special parahoric subgroup.
    \end{enumerate}

\begin{thm} \label{thm1}
	Assume that either (A) or (B) holds. 	
	\begin{enumerate}  
        \item The $\bE$-scheme \[\Sh_{\rK_p^\circ}(\bG,X)\coloneqq \varprojlim_{\rK^p}\Sh_{\rK^\circ_p\rK^p}(\bG,X) \] admits a $\bG(\BA_f^p)$-equivariant extension to a flat normal $\CO_{\bE,(v)}$-scheme $\sS_{\rK^\circ_p}(\bG,X)$. Any sufficiently small $\rK^p\sset \bG(\BA_f^p)$ acts freely on $\sS_{\rK_{p}^\circ}(\bG,X)$, and the quotient $$\sS_{\rK^\circ}(\bG,X)\coloneqq \sS_{\rK_{p}}(\bG,X)/\rK^p$$ is a flat normal $\CO_{\bE,(v)}$-scheme extending $\Sh_{\rK^\circ}(\bG,X)$. 
		\item For any discrete valuation ring $R$ of mixed characteristic $0$ and $p$, the map $$\sS_{\rK_{p}^\circ}(\bG,X)(R)\ra \sS_{\rK_{p}^\circ}(\bG,X)(R[1/p])$$ is a bijection.
		\item There exists a diagram of $\CO_{E}$-schemes  
		    $$\xymatrix{
		   &\wt{\sS}_{\rK_{p}^\circ}^\ad\ar[ld]_{\pi}\ar[rd]^q &\\ \sS_{\rK_{p}^\circ}(\bG,X)_{\CO_{E}} & &\BM^\loc_{\CG^\circ,\mu_{h}},
		}$$ where $\pi$ is a $\bG(\BA_f^p)$-equivariant $G_{\BZ_p}^{\ad\circ}$-torsor, $q$ is $G_{\BZ_p}^{\ad\circ}$-equivariant, and for any sufficiently small $\rK^p\sset \bG(\BA_f^p)$, the map $\wt{\sS}_{\rK_{p}^\circ}^\ad/\rK^p \ra \BM^\loc_{\CG^\circ,\mu_{h}}$ induced by $q$ is smooth of relative dimension $\dim \bG^\ad$. 
		\item If $\kappa$ is a finite extension of $k_E$ and $y\in\sS_{\rK_{p}^\circ}(G,X)(\kappa)$, then there exists $z\in \BM^\loc_{\CG^\circ,\mu_{h}}(\kappa)$ such that we have an isomorphism of henselizations $$\CO_{\sS_{\rK_{p}^\circ}(\bG,X), y}^h\simeq \CO_{\BM^\loc_{\CG^\circ,\mu_{h}},z}^h.$$ 
	\end{enumerate}
\end{thm}

Here in (3), $G^{\ad\circ}_{\BZ_p}$ denotes the parahoric group scheme over $\BZ_p$ with generic fiber $\bG^\ad_{\BQ_p}$, defined by $\CG^\circ$ using the map $\CB(\bG_{\BQ_p},\BQ_p)\ra \CB(\bG^\ad_{\BQ_p},\BQ_p)$ between extended Bruhat-Tits buildings, see \S \ref{subsect-cond}. The proof of Theorem \ref{thm1} will be given in \S \ref{caseA} and \S \ref{sec-application}.

\begin{remark}
    \begin{enumerate}
    	\item 	When $\rK_p^\circ$ is hyperspecial, Theorem \ref{thm1} has been proved by Kim-Madapusi \cite{kim20162}. In \textit{loc. cit.}, $(\bG^\ad,X^\ad)$ is allowed to have a factor of type $D^\BH$.
    	\item We expect that the results of van Hoften \cite{van2020mod} and Gleason-Lim-Xu \cite{gleason2022connected} can be extended to the $2$-adic models constructed in this paper. 
    \end{enumerate}
\end{remark}

Let us give two interesting cases in which Theorem \ref{thm1} can be applied to obtain integral models over $\BZ_{(2)}$ for $\Sh_{\rK_2^\circ\rK^2}(\bG,X)$ when $\rK_2^\circ$ is a parahoric subgroup contained in some hyperspecial subgroup. Let $F$ be a totally real number field which is unramified at primes over $2$. 
  \begin{enumerate}[(i)]
	\item $\bG=\Res_{F/\BQ}\GSpin(V,Q)$, where $\GSpin(V,Q)$ is the spin similitude group over $F$ attached to a quadratic space $(V,Q)$ of signature $(n,2)$ at each real place (assume $\GSpin(V,Q)$ is unramified over $F_v$, $v|2$) and $X$ is (a product of) the space of oriented negative definite planes;
	\item $\bG=\Res_{F/\BQ}\operatorname{GU}$, where $\operatorname{GU}$ is the unitary similitude group over $F$ that is unramified over $F_v$, $v|2$. We note that this case is also known by \cite[Appendix A]{rapoport2021shimura}.
\end{enumerate} 

As in \cite[Corolary 0.3]{kisin2018integral}, Theorem \ref{thm1} implies the following.

\begin{corollary}
	With the assumptions as in Theorem \ref{thm1}, the special fiber $\sS_{\rK_p^\circ}(\bG,X)\otimes k_E$ is reduced, and the strict henselizations of the local rings on $\sS_{\rK_p^\circ}(\bG,X)\otimes k_E$ have irreducible components which are normal and Cohen-Macaulay.
	
	If $\rK_p^\circ$ is associated to a point $x\in\CB(\bG_{\BQ_p},\BQ_p)$ which is a special vertex in $\CB(\bG_{\BQ_p},\BQ_p^\ur)$, then the special fiber $\sS_{\rK_p^\circ}(\bG,X)\otimes k_E$ is normal and Cohen-Macaulay.
\end{corollary}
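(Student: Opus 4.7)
The plan is to exploit the local model diagram in Theorem \ref{thm1}(3) to reduce both assertions to the geometry of the local model $\BM(G_1,\cbra{\mu_1},\CG_1^\circ)$. First I would invoke part (3): for any closed point $y$ of $\sS_{K_p^\circ}(G,X)\otimes\kappa(v)$ with residue field $\kappa'$, one obtains a point $z\in\BM(G_1,\cbra{\mu_1},\CG_1^\circ)(\kappa')$ together with an isomorphism of henselizations $\CO^h_{\sS_{K_p^\circ}(G,X),y}\simeq \CO^h_{\BM(G_1,\cbra{\mu_1},\CG_1^\circ),z}$. Since reducedness, normality, and the Cohen--Macaulay property are preserved and reflected by smooth morphisms and can be checked on (strict) henselizations, this reduces the corollary to the corresponding local statements on $\BM(G_1,\cbra{\mu_1},\CG_1^\circ)$.

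For the local model itself, I would invoke the Pappas--Zhu theory, exactly as used throughout \cite{kisin2018integral}. Since $G_1$ splits over a tamely ramified extension of $\BQ_p$ and the cocharacter $\mu_1$ is minuscule (it arises from a Hodge-type datum), the special fiber of $\BM(G_1,\cbra{\mu_1},\CG_1^\circ)$ is naturally identified with the reduced union, inside the partial affine flag variety $\Fl_{\CG_1^\circ}$, of the affine Schubert varieties indexed by the admissible set $\Adm(\cbra{\mu_1})$. In the minuscule, tame case each such affine Schubert variety is normal and Cohen--Macaulay, and the union is reduced of the expected dimension (combining Zhu's coherence theorem with standard Frobenius-splitting arguments on affine flag varieties). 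This immediately yields that the special fiber of $\sS_{K_p^\circ}(G,X)$ is reduced and that the irreducible components of its strict-henselian local rings are normal and Cohen--Macaulay.

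For the second assertion, when the base point $x$ is a special vertex in $\CB(G,\BQ_p^\ur)$ the admissible set $\Adm(\cbra{\mu})$ possesses a unique maximal element in the Iwahori--Weyl group, so the special fiber of $\BM(G,\cbra{\mu},\CG^\circ)$ collapses to a single affine Schubert variety, which is irreducible, normal, and Cohen--Macaulay. I expect the main obstacle to be transferring this special-vertex behaviour from the intrinsic model $\BM(G,\cbra{\mu},\CG^\circ)$ to the auxiliary model $\BM(G_1,\cbra{\mu_1},\CG_1^\circ)$ actually appearing in Theorem \ref{thm1}(3): the two parahoric structures are linked only through the common image point $\ol{x}\in \CB(G^\ad,\BQ_p)$, so one must check that the corresponding vertex in $\CB(G_1,\BQ_p^\ur)$ is again special and that the auxiliary admissible set $\Adm(\cbra{\mu_1})$ retains a unique maximum. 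In the situations covered by Theorem \ref{thm1}(4) the two local models coincide and the conclusion is immediate; in the remaining case one would argue by a direct comparison of Iwahori--Weyl groups and admissible sets for $G$ and $G_1$ attached to $\ol{x}$, and then transport normality and Cohen--Macaulayness back to $\sS_{K_p^\circ}(G,X)\otimes\kappa(v)$ by smooth descent.
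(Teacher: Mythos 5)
Your proposal follows essentially the same route as the one implicit in the paper (which simply cites \cite[Corollary 0.3]{kisin2018integral}): transfer reducedness, normality, and Cohen--Macaulayness through the smooth local model diagram of Theorem \ref{thm1}(3), then read off the corresponding properties of $\BM(G_1,\cbra{\mu_1},\CG_1^\circ)$ from the Pappas--Zhu/HLR description of the special fiber as the admissible locus and from its normality, which for $p=2$ is precisely what Theorem \ref{thm-normallocal}, Proposition \ref{prop-normalM1}, and the choice of auxiliary datum in Lemma \ref{lem-chooseGX} guarantee (so normality of the relevant Schubert varieties is not automatic from tameness and minusculeness alone, but is secured by the hypotheses). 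The transfer issue you flag at the end resolves because specialness of a vertex and the combinatorics of $W_\bbf\Adm(\{\mu\})W_\bbf$ having a unique maximal element are both invariants of the adjoint data $(G^\ad,\ol{x}^\ad,\{\mu^\ad\})$, which $G$ and $G_1$ share, so the single-Schubert-variety description of the reduced special fiber holds for $\BM(G_1,\cbra{\mu_1},\CG_1^\circ)$ exactly when $x$ is special.
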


\subsection{Strategy of the proof}
We now explain the idea to prove Theorem \ref{thm1}. The overall strategy follows that of \cite{kisin2018integral} and \cite{kisin2024integralmodelsshimuravarieties}. As in \textit{loc. cit.}, the crucial case is when $(\bG,X)$ is of Hodge type. A key step in this case involves identifying the formal neighborhood of $\sS_\rK(\bG,X)$ with that of the local model $\BM^\loc_{\CG,\mu_h}$. For $p>2$, this identification is obtained in \cite{kisin2018integral, kisin2024integralmodelsshimuravarieties}  by constructing a versal deformation of $p$-divisible groups (equipped with a family of crystalline tensors) over the formal neighborhood of the local model. The construction of this versal deformation uses Zink's theory of \Dieudonne displays that classify $p$-divisible groups. For $p=2$, we modify Zink's theory by using Lau's results from \cite{lau2014relations}, and obtain a similar deformation theory for $2$-divisible groups. A technical requirement arising in this step is that we need to find a Hodge embedding $$\iota: (\bG,X)\hookrightarrow (\bGSp(V,\psi),S^\pm),$$ where $V$ is a $\BQ$-vector space of dimension $2g$ equipped with a perfect alternating pairing $\psi$, such that $\iota_{\BQ_p}$ extends to a \dfn{very good} integral Hodge embedding $(\CG,\mu_h)\hookrightarrow (\GL(\Lambda),\mu_g)$, where $\Lambda\sset V_{\BQ_p}$ is a \dfn{self-dual} $\BZ_p$-lattice with respect to $\psi$. 

The concept of very good integral Hodge embeddings was introduced in \cite[\S 5.2]{kisin2024integralmodelsshimuravarieties} for $p>2$, refining the notion of \dfn{good} integral Hodge embeddings in \cite[Definition 3.1.6]{kisin2024independenceellfrobeniusconjugacy}. We generalize the concept to the case $p=2$ (see Definition \ref{defn-very}). Roughly speaking, a good integral integral Hodge embedding is an integral Hodge embedding $$\wt{\iota}: (\CG,\mu_h)\hookrightarrow (\GL(\Lambda),\mu_g)$$ extending $\iota_{\BQ_p}$ such that $\wt{\iota}$ induces a closed immersion \begin{flalign*}
	     \BM^\loc_{\CG,\mu_h}\hookrightarrow \BM^\loc_{\GL(\Lambda),\mu_g}\otimes_{\BZ_p}\CO_E=\Gr(g,\Lambda)\otimes_{\BZ_p}\CO_E
\end{flalign*} of local models, where $\Gr(g,\Lambda)$ denotes the Grassmannian of rank $g$ subspaces of $\Lambda$. The key idea behind very good Hodge embeddings is that certain collection of tensors $(s_\alpha)$ in the tensor algebra $\Lambda^\otimes$, cutting out $\CG$ in $\GL(\Lambda)$, should satisfy a ``horizontal" condition under the natural connection isomorphism. We refer to \S \ref{subsec-localmod} for more details. 
For a good integral Hodge embedding $\wt{\iota}$, Kisin-Pappas-Zhou proved in  \cite[Proposition 5.3.1, Lemma 5.3.2]{kisin2024integralmodelsshimuravarieties} that this horizontality condition is satisfied in the following two cases (including for $p=2$): 
\begin{enumerate}
	\item For any $x\in \BM^\loc_{\CG,\mu_h}(k)$, the image of the natural map \begin{flalign*}
		    \cbra{f\in \BM^\loc_{\CG,\mu_h}(k[[t]]) \ |\ f\mod (t)=x }\ra T_x\BM^\loc_{\CG,\mu_h}
	\end{flalign*} spans, as a $k$-vector space, the tangent space $T_x\BM^\loc_{\CG,\mu_h}$. 
	\item The tensors $(s_\alpha)\sset \Lambda^\otimes$ are in $\Lambda\otimes_{\BZ_p}\Lambda^\vee$.
\end{enumerate}  
Using this, they can produce sufficiently many very good Hodge embeddings when $p>2$.

When $p=2$, it is in general difficult to find a very good integral Hodge embedding $\wt{\iota}$ for a Shimura datum of Hodge type. 
In the present paper, we establish the existence of very good Hodge embeddings under the assumption (A) or (B). 

For Case (A), by applying \cite[Proposition 5.3.1, Lemma 5.3.2]{kisin2024integralmodelsshimuravarieties}, we are reduced to presenting the stabilizer group scheme $\CG$ as $(\Res_{\CO_F/\BZ_p}\CH)^\Gamma$, where $F/\BQ_p$ is a tame Galois extension with Galois group $\Gamma$ and $\CH$ is a reductive group over $\CO_F$. For Case (B), we directly prove that the tangent space of the local model $\BM^\loc_{\CG,\mu_h}$ at any closed point is spanned by formal curves (see Lemma \ref{lem-spantc}), using the explicit description of the (local) coordinate rings of the unitary local models in \cite[Theorem 1.2, 1.3]{yang24}.

\subsection{Organization}
We now give an overview of the paper.

In \S \ref{sec-lau}, we review Lau's results from \cite{lau2014relations}, which generalizes Zink's theory of \Dieudonne displays so that we can classify $2$-divisible groups over $2$-adic rings (see Theorem \ref{thm-classify}). A new feature of the theory of \Dieudonne displays in the case $p=2$ is the modified Verschiebung map for the Zink ring (see Lemma \ref{modifiedV}). In \S \ref{Dpairs}, we construct the natural ``connection isomorphisms" for \Dieudonne pairs when $p=2$ (see Lemma \ref{lem-connection}), generalizing \cite[Lemma 5.1.3]{kisin2024integralmodelsshimuravarieties} for $p>2$.  In \S \ref{subsubsec-compBK}, we compare Lau's classification of $p$-divisible groups with Breuil-Kisin's classification. This comparison is later used in \S \ref{subsubsec-hodgetensors} to construct $(\CG_W,\mu_y)$-adapted deformations of $p$-divisible groups in the sense of Definition \ref{defn-adapted}. 

In \S \ref{sec-defm}, we apply Lau's theory to construct a versal deformation of $p$-divisible groups, extending results from \cite[\S 3]{kisin2018integral} to the case $p=2$. We also generalize the concept of very good Hodge embeddings, introduced in \cite{kisin2024integralmodelsshimuravarieties}, to $p=2$. This is used to construct versal deformations of $p$-divisible groups with crystalline tensors (see Proposition \ref{prop-torsor}). In Proposition \ref{prop-adapted}, we establish a criterion for determining when a deformation is $(\CG_W,\mu_y)$-adapted, extending \cite[Proposition 4.7]{zhou2020mod} to $p=2$. 

In \S \ref{sec-intmodelhodge}, we apply results in \S \ref{sec-defm} to construct $2$-adic integral models of Shimura varieties of abelian type under certain assumptions (see Theorem \ref{thm-main}). The overall strategy follows that of \cite{kisin2018integral, kisin2024integralmodelsshimuravarieties}. We first treat the case of Shimura varieties of Hodge type and then extend to Shimura varieties of abelian type by finding suitable Hodge type lifts while closely following \cite{kisin2018integral}. In \S \ref{caseA} and \S \ref{sec-application}, we complete the proof of Theorem \ref{thm1} by verifying that the assumptions in Theorem \ref{thm-main} are satisfied in Case (A) or (B).

In Appendix \ref{appsec}, we show that, for an unramified group $G$ over a $2$-adic field $F$, if a stabilizer group scheme $\CG$ satisfies $\CG(\CO_F)\sset H$ for some hyperspecial subgroup $H$ of $G(F)$, then $\CG$ can be written as the tame Galois fixed points of the Weil restriction of scalars of a reductive group scheme. This result is used in the construction of very good integral Hodge embeddings in Case (A).

Very often we will refer the readers to corresponding arguments in \cite{kisin2018integral,kisin2024integralmodelsshimuravarieties} that are similar or can be directly extended to the case $p=2$ without repeating the proofs.

\subsection*{Acknowledgments}
I would like to thank my advisor G. Pappas for suggesting this problem to me, and for his support and encouragement. I am especially grateful to him for his explanation of several arguments in \cite{kisin2018integral, kisin2024integralmodelsshimuravarieties}, as well as reading a draft of the paper. I also thank K. Madapusi and R. Zhou for interesting conversations. 

\subsection*{Notations}\label{subsec-notation}
Let $p$ be a prime number. If $R$ is a commutative (not necessarily unitary) ring, we denote by $W(R)$ the ring of ($p$-typical) Witt vectors of $R$, with ghost maps $w_n: W(R)\ra R$. We let $\wh{W}(R)$ denote the group of all elements $(a_0,a_1,\ldots)\in W(R)$ such that $a_i=0$ for almost all $i$.  For $x\in R$, we use $[x]\in W(R)$ to denote the \Teichmuller representative of $x$.  Let $k$ be a perfect field of characteristic $p$. For most of the paper, $k$ is an algebraic closure of a finite field. Set $W\coloneqq W(k)$ and $K_0\coloneqq \Frac W$. 

If $F/\BQ_p$ is a non-archimedean local field, we let $\ol{F}$ denote a fixed choice of algebraic closure of $F$. Denote by $\breve F$ the completion of the maximal unramified extension of $F$ in $\ol{F}$.

\section{$p$-divisible groups and Lau's classification} \label{sec-lau}
In this section, we review Lau's work \cite{lau2014relations} on the classification of $2$-divisible groups in terms of \Dieudonne displays. We generalize the construction of the natural ``connection isomorphisms" for \Dieudonne pairs in \cite{kisin2024integralmodelsshimuravarieties} to the case $p=2$. We also compare Lau's classification of $p$-divisible groups with Breuil-Kisin's classfication. 

\subsection{\Dieudonne displays and $p$-divisible groups}

\subsubsection{The Zink ring}
Let $(R,\fm_R,k)$ be an artinian local ring (or more generally an \dfn{admissible ring} in the sense of \cite[\S 1]{lau2014relations}) with residue field $k$. 
Denote by $W(R)$ its associated Witt ring equipped with Frobenius $\varphi$ and Verschiebung $V$. By \cite[\S 1B]{lau2014relations}, the exact sequence $$0\ra W(\fm_R)\ra W(R)\ra W(k)\ra 0$$ has a unique ring homomorphism section $s: W(k)\ra W(R)$, which is $\varphi$-equivariant.

\begin{defn}[{\cite{zink2001dieudonne}}]  \label{defn-zink}
	The \dfn{Zink ring} of $R$ is $\BW(R)=sW(k)\oplus \wh{W}(\fm_R)$, where $\wh{W}(\fm_R)\sset W(\fm_R)$ consists of elements $(x_0,x_1,\ldots)\in  W(\fm_R)$ such that $x_i=0$ for almost all $i$. 
\end{defn}

%Note that if $R$ is artinian with perfect residue field of characteristic $p$, the Zink ring $\BW(R)$ recovers the definition introduced in \cite{zink2001dieudonne} (but under the notation $\wh{W}(R)$). 
The Zink ring $\BW(R)$ is a $\varphi$-stable subring of $W(R)$. If $p=2$, $\BW(R)$ is in general not stable under the Verschiebung $V$. We need to modify $V$ as follows. The element $p-[p]\in W(\BZ_p)$ lies in the image of $V$ because it maps to zero in $\BZ_p$. Moreover, the element $V\inverse(p-[p])\in W(\BZ_p)$ is a unit, since it maps to $1$ in $W(\BF_p)$. Define \begin{flalign}
	   u_0\coloneqq \begin{cases}
	   	  V\inverse(2-[2])\quad &\text{if $p=2$},\\ 1 &\text{if $p\geq 3$}.
	   \end{cases}   \label{eq-u0}
\end{flalign}
The image of $u_0\in W(\BZ_p)\cross $ in $W(R)\cross$ is also denoted by $u_0$. For $x\in W(R)$, set $$\BV(x)\coloneqq V(u_0 x).$$  

\begin{lemma}[{\cite[Lemma 1.7]{lau2014relations}}] \label{modifiedV}
	The map $\BV: W(R)\ra W(R)$ satisfies $\BV(\BW(R))\sset\BW(R)$. Moreover, there is an exact sequence $$0\ra \BW(R)\xrightarrow{\BV}\BW(R)\xrightarrow{w_0}R\ra 0. $$
\end{lemma}

\begin{remark}
	We will call the map $$\BV: \BW(R)\ra \BW(R)$$ the \dfn{modified Verschiebung} for $\BW(R)$. Many statements about $\BW(R)$ in the case $p=2$ are proven by adapting the corresponding proofs for $p>2$, with adjustments for the modified Verschiebung map.
\end{remark}

Now we recall the logarithm coordinates of the Witt ring, see \cite[\S 1C]{lau2014relations}. Let $(S\ra R,\delta)$ be a divided power extension of rings  with kernel $\fa\sset S$. Denote by $\fa^\BN$ the additive group $\prod_{i\in\BN}\fa$, equipped with a $W(S)$-module structure $$x[a_0,a_1,\ldots]\coloneqq [w_0(x)a_0, w_1(x)a_1,\ldots] $$
for $x\in W(S)$ and $[a_0,a_1,\ldots]\in \prod_{i\in\BN}\fa$.
 Then the $\delta$-divided Witt polynomials $w_n'$ define an isomorphism of $W(S)$-modules \begin{flalign*}
	   \Log: W(\fa) &\simto \fa^\BN\\ \ud{a}=(a_0,a_1,\ldots) &\mapsto [w_0'(\ud{a}), w_1'(\ud{a}),\ldots]
\end{flalign*} where $w_n'(X_0,\ldots,X_n)=(p^n-1)!\delta_{p^n}(X_0)+ (p^{n-1}-1)!\delta_{p^{n-1}}(X_1)+\cdots+ X_n $. For $x\in W(\fa)$, we call $\Log(x)$ the \dfn{logarithmic coordinate} of $x$. In terms of logarithmic coordinates, the Frobenius and Verschiebung of $W(\fa)$ act on $\fa^\BN$ as \begin{flalign}
	    \varphi([a_0,a_1,\ldots])=[pa_1,pa_2,\ldots], \quad V([a_0,a_1,\ldots])=[0,a_0,a_1,\ldots]. \label{eq-Frob} 
\end{flalign} 
Moreover, $\Log$ induces an injective map $$\Log: \wh{W}(\fa)\hookrightarrow \fa^{(\BN)},$$ which is bijective when the divided powers $\delta$ are nilpotent. Here, the group $\wh{W}(\fa)$ denotes the set of elements $(a_0,a_1,\ldots)\in W(\fa)$ such that $a_i=0$ for almost all $i$, and $\fa^{(\BN)}\sset\fa^\BN$ denotes $\bigoplus_{i\in\BN}\fa$. The ideal $\fa\sset W(S)$ is by definition the set of elements whose logarithmic coordinates are of the form $[a,0,0,\ldots]$, $a\in \fa$. 

\begin{defn}
	For a (Noetherian) complete local ring $R$ with residue field $k$, we set $$\BW(R)\coloneqq \varprojlim_n \BW(R/\fm_R^n).$$ 
\end{defn}
For a complete local ring $R$, we can define the modified Verschibung $\BV$ on $\BW(R)$ by passing to the limit. Then $\BW(R)$ is a subring of $W(R)\coloneqq \varprojlim_nW(R/\fm_R^n)$, which is stable under $\varphi$ and $\BV$. We also have  $\BW(R)/\BV(\BW(R))\simeq R$, see \cite[\S 1E]{lau2014relations}. 
Note that $\BW(R)$ is $p$-adically complete by \cite[Proposition 1.14]{lau2014relations}. 

\subsubsection{Frames and windows}
Here, we introduce notions of frames and windows following \cite[\S 2]{lau2010frames} and \cite[\S 2]{lau2014relations}.
\begin{defn}
	\begin{enumerate}
	    \item A \dfn{frame} is a quintuple $\CF=(S,I,R,\sigma,\sigma_1)$, where $S$ and $R=S/I$ are rings, $\sigma: S\ra S$ is a ring endomorphism with $\sigma(a)\equiv a^p\mod pS$, $\sigma_1: I\ra S$ is a $\sigma$-linear map of $S$-modules whose image generates $S$ as an $S$-module, and $I+pS$ lies in the Jacobson radical of $S$. A frame is called a \dfn{lifting frame} if all projective $R$-modules of finite type can be lifted to projective $S$-modules. 
		\item A homomorphism of frames $$\alpha: \CF\lra \CF'=(S',I',R',\sigma',\sigma_1')$$ is a ring homomorphism $\alpha: S\ra S'$ with $\alpha(I)\sset I'$ such that $\sigma'\alpha=\alpha\sigma$ and $\sigma_1'\alpha=u\cdot \alpha \sigma_1$ for a unit $u\in S'$, which is then determined by $\alpha$. We say that $\alpha$ is a frame $u$-homomorphism. If $u=1$, then $\alpha$ is called \dfn{strict}.
		\item Let $\CF$ be a frame. A \dfn{window over $\CF$} (or $\CF$-window) is a quadruple $$\CP=(M,M_1,F,F_1),$$ where $M$ is a projective $S$-module of finite type with a submodule $M_1$ such that there exists a decomposition of $S$-modules $M=L\oplus T$ with $M_1=L\oplus IT$, called a \dfn{normal decomposition}, and where $F: M\ra M$ and $F_1: M_1\ra M$ are $\sigma$-linear maps of $S$-modules with $$F_1(ax)=\sigma_1(a)F(x)$$ for $a\in I$ and $x\in M$, and $F_1(M_1)$ generates $M$ as an $S$-module.
	\end{enumerate}
\end{defn}

\begin{remark} \label{rmk-frame}
	If $\CF$ is a lifting frame, then the existence of a normal decomposition in (3) of the above definition is equivalent to that $M/M_1$ is a projective $R$-module. A frame is a lifting frame if $S$ is local or $I$-adic. 
\end{remark}

% Given $(M,M_1)$ together with a normal decomposition $M=L\oplus T$, giving $\sigma$-linear maps $(F,F_1)$ which make $(M,M_1,F,F_1)$ a window $\CP$ over $\CF$ is equivalent to giving a $\sigma$-linear isomorphism $\Psi: L\oplus T\simto M$ defined by $F_1$ on $L$ and by $F$ on $T$. The triple $(L,T,\Psi)$ is called a \dfn{normal representation of $\CP$}. 

A $u$-homomorphism $\alpha: \CF\ra \CF'$ induces a base change functor \begin{flalign}
	\alpha_*: \text{(windows over $\CF$)}\lra \text{(windows over $\CF'$)} \label{eq-basechange}
\end{flalign}  from the category of windows over $\CF$ to the category of windows over $\CF'$. In terms of normal representations, the functor $\alpha_*$ is given by $$(L,T,\Psi)\mapsto (S'\otimes_SL, S'\otimes_ST, \Psi')$$ with $\Psi'(s'\otimes l)=u\sigma'(s')\otimes \Psi(l)$ and $\Psi'(s'\otimes t)=\sigma'(s')\otimes \Psi(t)$. 
\begin{defn}
	A frame homomorphism $\alpha: \CF\ra \CF'$ is called \dfn{crystalline} if the functor $\alpha_*$ is an equivalence of categories.
\end{defn}

Note that for a frame $\CF=(S,I,R, \sigma,\sigma_1)$, there is a unique element $\theta\in S$ such that $\sigma(a)=\theta\sigma_1(a)$ for all $a\in I$. For an $S$-module $M$, we write $M^{(\sigma)}=S\otimes_{\sigma,S}M$. Then for a window $\CP=(M,M_1,F,F_1)$ over $\CF$, by \cite[Lemma 2.3]{lau2014relations}, there exists a unique $S$-linear map \begin{flalign}
	   V^\sharp: M\lra M^{(\sigma)}  \label{eq-Vsharp}
\end{flalign}  such that $V^\sharp (F_1(x))=1\otimes x$ for $x\in M_1$. It satisfies $F^\# V^\sharp=\theta$ and $V^\sharp F^\#=\theta$, where $F^\#: M^{(\sigma)}\ra M $ is the linearization of $F$.

\begin{example} \label{eg-frame}
	For a complete local ring $R$ with perfect residue field, we will be interested in the following (lifting) frames: \begin{enumerate}
		\item the \dfn{\Dieudonne frame} \[ \CD_R\coloneqq  (\BW(R), \BI_R, R, \varphi, \varphi_1),\] where $\BI_R=\ker (w_0: \BW(R)\ra R)$ and $\varphi_1:  \BI_R\ra \BW(R)$ is the inverse of $\BV$;
		\item assume $R=\CO_K$ for some finite extension $K$ of $\BQ_p$ with residue field $k$, choose a presentation $R=\fS/E\fS$, where $\fS=W(k)[[u]]$ and $E\in \fS$ is an Eisenstein polynomial with constant term $p$. Define the \dfn{Breuil-Kisin frame} \[\CB\coloneqq (\fS, E\fS, R, \varphi,\varphi_1),\] where $\varphi: \fS\ra \fS$ acts on $W(k)$ as usual Frobenius and sends $u$ to $u^p$, and $\varphi_1(Ex)\coloneqq \varphi(x)$ for $x\in \fS$.
	\end{enumerate}
\end{example}

\subsubsection{\Dieudonne displays and \Dieudonne pairs} \label{Dpairs}
%Note that when $R$ has odd residue characteristic, i.e., $p>2$, windows over $\CD_R$ are the same as the \Dieudonne displays over $R$ used in \cite[3.1.3]{kisin2018integral}, and the ring $\BW(R)$ here is denoted by $\wh{W}(R)$ in \loccit More explicitly, 
Let $R$ be a complete local ring with perfect residue field of characteristic $p$. By Remark \ref{rmk-frame},  a window over $\CD_R$ (also called a \dfn{\Dieudonne display over $R$} later) is a tuple $(M,M_1,F,F_1)$, where \begin{enumerate}[(i)]
	\item $M$ is a finite free $\BW(R)$-module,
	\item $M_1\sset M$ is a $\BW(R)$-submodule such that $\BI_RM\sset M_1\sset M$ and $M/M_1$ is a projective $R$-module, 
	\item $F: M\ra M$ is a $\varphi$-linear map,
	\item $F_1: M_1\ra M$ is a $\varphi$-linear map, whose image generates $M$ as a $\BW(R)$-module, and which satisfies \begin{flalign}
		    F_1(\BV(w)m)=wF(m)   \label{eq-FF1}
              \end{flalign}
              for any $w\in\BW(R)$ and $m\in M_1$.
	\end{enumerate}
\begin{remark}
	For $p>2$, windows over $\CD_R$ are the same as the \Dieudonne displays over $R$ used in \cite[3.1.3]{kisin2018integral}, and the ring $\BW(R)$ here is denoted by $\wh{W}(R)$ in \textit{loc. cit.}.
\end{remark}
%	Note that any finite projective module over $\BW(R)$ (a local ring) is free. 
	For a \Dieudonne display $(M,M_1,F,F_1)$, by taking $w=1$ and $m\in M_1$ in the equation \eqref{eq-FF1}, we get \begin{flalign*}
		   F(m)=F_1(\BV(1)m)=\varphi\BV(1)F_1(m) =pu_0 F_1(m).
	\end{flalign*}  Recall that $u_0\in W(R)\cross$ is defined by (\ref{eq-u0}). In particular, we can consider the condition \begin{enumerate}
		\item [(iv$^\prime$)] $F_1: M_1\ra M$ is a $\varphi$-linear map, whose image generates $M$ as a $\BW(R)$-module, and which satisfies \begin{flalign*}
		    F_1(\BV(w)m)=wpu_0F_1(m)   
              \end{flalign*}
              for any $w\in\BW(R)$ and $m\in M_1$.
	\end{enumerate}

Let $\wt{M}_1$ be the image of the homomorphism $$\varphi^*(i): \varphi^*M_1=\BW(R)\otimes_{\varphi,\BW(R)}M_1 \ra \varphi^*M=\BW(R)\otimes_{\varphi,\BW(R)}M$$ induced by the inclusion $i: M_1\hookrightarrow M$. Note that $\wt{M}_1$ and the notion of a normal decomposition depend only on $M$ and $M_1$, not on $F$ and $F_1$. 

\begin{lemma}
	\label{lem-Psi} Suppose $\BW(R)$ is $p$-torsion free (e.g. if $R$ is $p$-torsion free, or $pR=0$ and $R$ is reduced).  \begin{enumerate}
	    \item Giving a \Dieudonne display  $(M,M_1, F,F_1)$ over $R$ is the same as giving $(M,M_1,F_1)$ satisfying (i), (ii) and (iv$^\prime$).  In this case, we also refer to the tuple $(M,M_1,F_1)$ as a \Dieudonne display over $R$.
		\item For a \Dieudonne display $(M,M_1,F_1)$ over $R$, the linearization $F_1^\#$ of $F_1$ factors as $$\varphi^*M_1\ra \wt{M}_1\xrightarrow{\Psi}M$$ with $\Psi$ a $\BW(R)$-module isomorphism.
		\item Given an isomorphism $\Psi: \wt{M}_1\ra M$, there exists a unique \Dieudonne display $(M,M_1,F_1)$  over $R$, which produces the given $(M,M_1,\Psi)$ via the construction in $(2)$. 
	\end{enumerate}
\end{lemma}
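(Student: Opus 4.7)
The three parts organize around a single identity: in any \Dieudonne display $(M,M_1,F,F_1)$, one has $F(m)=F_1(\BV(1)m)$ for every $m\in M$, because $\BV(1)\in\BI_R$ forces $\BV(1)m\in\BI_R M\sset M_1$, and specializing (iv) at $w=1$ gives this. The two algebraic inputs I will exploit throughout are $\varphi(\BV(1))=pu_0$ (from $\varphi V=p$ on $W(R)$) and the fact that $\BW(R)\sset W(R)$ inherits $p$-torsion freeness from the hypothesis, which makes $pu_0$ a non-zero-divisor in $\BW(R)$.

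For (1), uniqueness of $F$ is immediate from the displayed identity. For existence, I would define $F\colon M\to M$ by $F(m):=F_1(\BV(1)m)$. Its $\varphi$-linearity follows from $\BV(1)\cdot am=a\cdot\BV(1)m\in M_1$ together with the $\varphi$-linearity of $F_1$. To verify the full relation $F_1(\BV(w)m)=wF(m)$ for arbitrary $m\in M$, I would multiply both sides by $pu_0$. Applying (iv$^\prime$) at $w'=1$ to the element $\BV(w)m\in M_1$ gives $pu_0F_1(\BV(w)m)=F_1(\BV(1)\cdot\BV(w)m)=F(\BV(w)m)$, and by $\varphi$-linearity of $F$ this equals $\varphi(\BV(w))F(m)=pu_0 wF(m)$. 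Cancelling $pu_0$ by $p$-torsion freeness yields (iv).

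For (2), I would fix a normal decomposition $M=L\oplus T$ so that $M_1=L\oplus\BI_R T$. Using $\BI_R=\BV(\BW(R))$ and $\varphi\BV(w)=pu_0 w$, we have $\varphi(\BI_R)=p\BW(R)$, hence $\wt{M}_1=\varphi^*L\oplus p\varphi^*T\sset\varphi^*M=\varphi^*L\oplus\varphi^*T$. The kernel of $\varphi^*i$ lies in the second summand. For any $x=\sum a_j\otimes b_j t_j\in\varphi^*(\BI_R T)$ with $b_j\in\BI_R$ and $t_j\in T$, the computation $pu_0\cdot F_1^\sharp(x)=\sum a_jF(b_jt_j)=\sum a_j\varphi(b_j)F(t_j)=F_T^\sharp(\varphi^*i(x))$, where $F_T^\sharp\colon\varphi^*T\to M$ is the linearization of $F|_T$, shows that $F_1^\sharp$ vanishes on $\ker(\varphi^*i)$ by $p$-torsion freeness, and hence factors through $\Psi\colon\wt{M}_1\to M$. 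Surjectivity of $\Psi$ is the generation axiom for $F_1(M_1)$. Since multiplication by $p$ identifies $\varphi^*T$ with $p\varphi^*T$ by $p$-torsion freeness, both $\wt{M}_1$ and $M$ are finite projective $\BW(R)$-modules of the same rank, so the surjection $\Psi$ is an isomorphism.

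For (3), given $(M,M_1,\Psi)$, I would define $F_1\colon M_1\to M$ by $F_1(m):=\Psi(\varphi^*i(1\otimes m))$. Its $\varphi$-linearity is formal from the tensor identity $1\otimes am=\varphi(a)\otimes m$ in $\varphi^*M_1$; the image generates $M$ because $\Psi$ is an isomorphism; and (iv$^\prime$) follows from $\varphi^*i(1\otimes\BV(w)m)=\varphi(\BV(w))\otimes m=pu_0w\otimes m$ combined with $\BW(R)$-linearity of $\Psi$. Uniqueness of $F_1$ is forced by the factorization in (2), and then (1) supplies the unique compatible $F$. The main technical nuisance throughout is the careful bookkeeping with Frobenius twists and the $u_0$-modified Verschiebung $\BV$ at $p=2$; the $p$-torsion-freeness hypothesis is precisely what keeps the factor $pu_0$ cancellable and thus ties the three parts together.
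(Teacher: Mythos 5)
Your proof is correct and follows essentially the same route as the paper's: in (1) you define $F(m)=F_1(\BV(1)m)$, verify $\varphi$-linearity, and cancel $pu_0$ using the $p$-torsion-freeness hypothesis to recover (iv); in (2) you use a normal decomposition to identify $\wt{M}_1$, show $F_1^\sharp$ kills $\ker(\varphi^*i)$ by the same cancellation, and deduce the isomorphism from the rank count; in (3) you define $F_1(m)=\Psi(1\otimes m)$ and check the axioms. The only cosmetic differences are that the paper writes $\wt{M}_1=\varphi^*(L)\oplus pu_0\varphi^*(T)$ where you write $p\varphi^*(T)$ (the same module, since $u_0$ is a unit), and that the paper deduces the vanishing on $\ker(\varphi^*i)$ from the global linearization identity $pu_0F_1^\#=F^\#\circ\varphi^*(i)$ rather than by the component-wise computation on $\varphi^*(\BI_R T)$ that you carry out -- both arguments are valid since, as you note, the kernel lies entirely in that summand.
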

\begin{proof}
	The proof closely follows \cite[\S 3.1.3, Lemma 3.1.5]{kisin2018integral}, with adjustments for the modified Verschiebung $\BV$. We take this lemma as an example to illustrate how we modify the arguments concerning \Dieudonne displays in \cite{kisin2018integral} to deal with the case $p=2$. 
	
	(1) Given the tuple $(M,M_1,F_1)$, set $F(m)\coloneqq F_1(\BV(1)m)$ for $m\in M$. Clearly $F: M\ra M$ is $\varphi$-linear. Then for $w\in\BW(R)$ and $m\in M$, we have $$pu_0F_1(\BV(w)m)=F_1(\BV(1)\BV(w)m)= F(\BV(w)m)=\varphi\BV(w)F(m)=pu_0wF(m). $$ Since $u_0\in W(R)\cross$ and $\BW(R)$ is $p$-torsion free, we obtain that $\BW(R)$ is $(pu_0)$-torsion free, and hence $$F_1(\BV(w)m)=wF(m).$$ In particular, $(M,M_1,F,F_1)$ is a \Dieudonne display.
	
	(2) Let $M=L\oplus T$ be a normal decomposition for $M$. Since $\varphi(\BI_R)=pu_0\BW(R)$ and $\BW(R)$ is $pu_0$-torsion free, we have $$\wt{M}_1=\varphi^*(L)\oplus pu_0\varphi^*(T)\simeq \BW(R)^d,$$ where $d=\rk_{\BW(R)} M$. Firstly, we show that $F_1^\#$ factors through $\wt{M}_1$. Let $K$ denote the kernel of $\varphi^*(i): \varphi^*M_1\ra \varphi^*M$. Note that $F|_{M_1}=pu_0 F_1$, and so $pu_0 F_1^\#=F^\#\circ\varphi^*(i)$. In particular, $pu_0 F_1^\#$ vanishes on $K$. Since $\BW(R)$ is $pu_0$-torsion free, we conclude that $F_1^\#$ vanishes on $K$, and hence $F_1^\#$ factors through $\wt{M}_1$. Since $F_1^\#$ is surjective by definition, we obtain a surjective map $\Psi: \wt{M}_1\ra M$ between free $\BW(R)$-modules of the same rank. Hence, $\Psi$ is an isomorphism. 
	
	(3) Define $F_1: M_1\ra M$ by $$F_1(m_1)\coloneqq \Psi(1\otimes m_1),$$ where $1\otimes m_1$ denotes the image of $1\otimes m_1\in \BW(R)\otimes_{\varphi,\BW(R)}M_1=\varphi^*M_1$ in $\varphi^*M$. Then $F_1$ is clearly $\varphi$-linear and its linearization $F_1^\#$ is surjective. Thus, we obtain a \Dieudonne display $(M,M_1,F_1)$.
\end{proof}

\begin{defn}[{\cite[\S 1.1]{hoff2023parahoric}}]
    Let $R$ be a complete local ring.
	\begin{enumerate}
		\item A \dfn{\Dieudonne pair of type $(n,d)$} over $R$ is a pair $(M,M_1)$ of $\BW(R)$-modules such that $M$ is a finite free $\BW(R)$-module of rank $n$, $M_1$ is a $\BW(R)$-submodule of $M$ and $M/M_1$ is a finite free $R$-module of rank $d$. Sometimes, we simply say that $(M,M_1)$ is a \Dieudonne pair. 
		\item A morphism between two \Dieudonne pairs $(M,M_1)$ and $(M',M_1')$ is a homomorphism of $\BW(R)$-modules $f: M\ra M'$ such that $f(M_1)\sset M_1'$.
	\end{enumerate}
\end{defn}

\begin{lemma}   \label{lem-pairs}
	There exists a functor $\CF:$ $(M,M_1)\mapsto \wt{M}_1$, from the category of \Dieudonne pairs over $R$ of type $(n,d)$ to the category of finite free $\BW(R)$-modules of rank $n$, such that $\CF$ is compatible with base change in $R$ and there is a natural isomorphism $\wt{M}_1[1/p]=(\varphi^*M)[1/p]$. If $\BW(R)$ is $p$-torsion free, then $\wt{M}_1$ is given by the construction in Lemma \ref{lem-Psi}.
\end{lemma}
\begin{proof}(cf. \cite[\S 5.1.1]{kisin2024integralmodelsshimuravarieties}.)
	  Let $(M,M_1)$ be a \Dieudonne pair of type $(n,d)$. Choose a normal decomposition $M=L\oplus T$ and a basis $\sB=(e_1,\ldots,e_n)$ of $M$ such that $(e_1,\ldots,e_d)$ is a basis of $L$ and $(e_{d+1},\ldots,e_n)$ is a basis of $T$. Such a basis $\sB$ is said to be \dfn{adapted} to the normal decomposition $M=L\oplus T$.  Set $$\CF((M,M_1))=\wt{M}_1=(\varphi^*L)\oplus(\varphi^*T),$$ which is a free $\BW(R)$-module of rank $n$. We denote by $\wt{\sB}=(\varphi^*{e}_1,\ldots,\varphi^*{e}_n)$ the basis of $\wt{M}_1$. 
	  
	  Let $(M',M_1')$ be a second \Dieudonne pair with a normal decomposition $M'=L'\oplus T'$ and an adapted basis $\sB'=(e_1',\ldots,e_n')$. Let $f$ be  a morphism between $(M,M_1)$ and $(M',M_1')$. Using the normal decompositions, we may express $f$ as a block matrix $$\begin{pmatrix}
	  	    A &B\\ C &D
	  \end{pmatrix}\in M_n(\BW(R))$$ with respect to the bases $\sB$ and $\sB'$, where the entries of $C$ are in $\BI_R$. Then we define $\CF(f)$ to be the morphism $\wt{f}: \wt{M}_1\ra \wt{M}'_1$ given by the block matrix $$\begin{pmatrix}
	  	  \varphi(A) &pu_0\varphi(B)\\ \BV\inverse(C) &\varphi(D)
	  \end{pmatrix}$$
	  in terms of the bases $\wt{\sB}$ and $\wt{\sB}'$. Using $pu_0\BV\inverse=\varphi$, it is straightforward to check that $\CF$ is a well-defined functor. By construction, $\CF$ is compatible with base change in $R$. 
	  
	  There is a natural isomorphism \begin{flalign*}
	  	      \wt{M}_1[1/p]=(\varphi^*L)[1/p] \oplus(\varphi^*T)[1/p]&\simto (\varphi^*M)[1/p]=(\varphi^*L)[1/p] \oplus(\varphi^*T)[1/p]\\ l+t &\mapsto l +pu_0t.
	  \end{flalign*}
	  When $\BW(R)$ is $p$-torsion free, the above isomorphism restricts to an injective map \begin{flalign*}
	  	      \wt{M}_1\hookrightarrow \varphi^*M,
	  \end{flalign*}
	  and we recover the construction of $\wt{M}_1$ in Lemma \ref{lem-Psi}.
\end{proof}

\begin{lemma}[{cf. \cite[Lemma 5.1.3]{kisin2024integralmodelsshimuravarieties}}]  \label{lem-connection}
    Let $R$ be a complete local ring with residue field $k$. Suppose that $\BW(R)$ is $p$-torsion free. 
    Let $(M,M_1)$ be a \Dieudonne pair over $R$ with reduction $(M_0,M_{0,1})$ over $k$. 
	Set $\fa_R\coloneqq \fm_R^2+pR$. Then there exists a natural isomorphism $$c: \wt{M}_{0,1}\otimes_{W(k)}\BW(R/\fa_R) \simto \wt{M}_1\otimes_{\BW(R)}\BW(R/\fa_R) , $$ which is called the ``connection isomorphism", fitting into a canonical commutative diagram $$\xymatrix{
         \wt{M}_1\otimes_{\BW(R)}\BW(R/\fa_R)\ar[r]  &\varphi^*(M_{R/\fa_R})\ar@{=}[d] \\ \wt{M}_{0,1}\otimes_{W(k)}\BW(R/\fa_R)\ar[r]\ar[u]^{c}_{\simeq} &\varphi^*(M_0)\otimes_{W(k)}\BW(R/\fa_R),
}$$ where $M_{R/\fa_R}\coloneqq M\otimes_{\BW(R)}\BW(R/\fa_R)$ and horizontal maps are induced by taking the base change of the natural maps $\wt{M}_{0,1}\ra \varphi^*(M_0)$ and $\wt{M}_1\ra\varphi^*(M)$.
\end{lemma}
\begin{proof}
	Using Lemma \ref{lem-pairs}, the construction of $c$ and the proof of \cite[Lemma 5.1.3]{kisin2024integralmodelsshimuravarieties} (replacing $V\inverse$ by $\BV\inverse$) also work for $p=2$.
\end{proof}

\subsubsection{Lau's classification}
One of the main results in \cite{lau2014relations} is the following.
\begin{thm}
	\label{thm-classify} Let $R$ be a complete local ring with perfect residue field of characteristic $p$. 
	\begin{enumerate}
		\item There is an anti-equivalence of exact categories \begin{flalign*}
		    \Theta_R: \text{($p$-divisible groups over $R$) }\simto \text{(\Dieudonne displays over $R$) },
	\end{flalign*} which is compatible with base change in $R$. 
	    \item For any $p$-divisible group $\sG$ over $R$, there is a natural isomorphism $$\Theta_R(\sG)/\BI_R\Theta_R(\sG)\simeq \BD(\sG)(R),$$ where $\BD(\sG)$ denotes the contravariant \Dieudonne crystal of $\sG$.
	    \item Let $\sG$ be a $p$-divisible group over $R$. Write $\Theta_R(\sG)=(M,M_1,F,F_1)$. The Hodge filtration of $\Theta_R(\sG)$ is defined as \[M_1/\BI_RM\sset M/\BI_RM.\]
	        Then the isomorphism in (2) respects the Hodge filtrations on both sides. 
	\end{enumerate}  
\end{thm}
\begin{remark}
	For $p>2$, the functor $\Theta_R$ recovers the anti-equivalence used in \cite[3.1.7]{kisin2018integral} by sending a $p$-divisible group $\sG$ over $R$ to $\BD(\sG)(\BW(R))$. Note that when $p>2$, $\BW(R)\ra R$ has divided powers on $\BI_R$ by \cite[Lemma 1.16]{lau2014relations}. For $p=2$, $\Theta_R$ is not as explicit as in the case $p>2$, but see the case when $R$ is a ring of $p$-adic integers in \S \ref{subsubsec-compBK}.
\end{remark}
\begin{proof}
	 (1) For any $p$-divisible group $\sG$ over $R$, set $$\Theta_R(\sG)\coloneqq \Phi_R(\sG^*),$$ where $\sG^*$ denotes the Cartier dual of $\sG$ and $\Phi_R$ denotes the equivalence in \cite[Corollary 5.4]{lau2014relations}. Then we see that $\Theta_R$ is an anti-equivalence of exact categories. It commutes with base change in $R$ by \cite[Theorem 3.9, 4.9]{lau2014relations}.
	 
	 (2) and (3) follow from \cite[Corollary 3.22, 4.10]{lau2014relations}. Note that we use \dfn{contravariant} \Dieudonne crystals following \cite{kisin2018integral}, while Lau uses \dfn{covariant} \Dieudonne crystals in \cite{lau2014relations}. One can switch between contravariant and covariant \Dieudonne crystals by taking Cartier duals.
	 
\end{proof}

\subsection{Comparison with Breuil-Kisin's classification}
\label{subsubsec-compBK}
Here the notation is as in Example \ref{eg-frame} (2). In particular, we let $R$ denote the ring $\CO_K$ of integers for some finite extension $K$ of $\BQ_p$ with residue field $k$. Let $\pi$ be a uniformizer of $\CO_K$ satisfying $E(\pi)=0$. Then there is a Frobenius-equivariant ring homomorphism $$\kappa: \fS=W(k)[[u]] \ra  W(\CO_K)$$ sending $u$ to $[\pi]$, lifting the quotient map $\fS\ra \CO_K$. Here $[\cdot]$ denotes the \Teichmuller map $\CO_K\ra W(\CO_K)$. Moreover, the image of $\kappa$ lies in $\BW(\CO_K)$, see \cite[Remark 6.3]{lau2014relations}. Recall that $\CB$ denotes the Breuil-Kisin frame in Example \ref{eg-frame} (2). By \cite[Theorem 6.6]{lau2014relations}, $\kappa$ induces a crystalline homomorphism $$\kappa: \CB\ra \CD_{\CO_K}.$$ That is, the induced functor $\kappa_*$ as in (\ref{eq-basechange})  gives an equivalence \begin{flalign*}
	    \kappa_*: \text{(windows over $\CB$)}\simto \text{(windows over $\CD_{\CO_K}$)}=\text{(\Dieudonne displays over $\CO_K$)}.
\end{flalign*}

Using the anti-equivalence $\Theta_{\CO_K}$ in Theorem \ref{thm-classify}, we obtain the anti-equivalence  \begin{flalign}
	 \CB(-)\coloneqq \kappa_*\inverse\circ \Theta_{\CO_K}: \text{($p$-divisible groups over $\CO_K$)}\simto \text{(windows over $\CB$)}.   \label{eq-equivB}
\end{flalign}
On the other hand, we have, by \cite[Theorem 1.4.2]{kisin2010integral}, a fully faithful contravariant functor \begin{flalign*}
	    \fM(-): (\text{$p$-divisible groups over $\CO_K$}) &\lra \mathrm{BT}_\fS^\varphi,
%	    \\ \sG &\mapsto \varphi^*\fM(\sG)\quad \text{where  $\fM(\sG)\coloneqq \fM(T_p(\sG)^\vee$} 
\end{flalign*}
where $\mathrm{BT}_\fS^\varphi$ denotes the category of Breuil-Kisin modules $(\fM,\varphi_\fM)$ of $E$-height one, i.e., $\fM$ is a finite free $\fS$-module and $\varphi_\fM: \varphi^*\fM\ra \fM $ is an $\fS$-module homomorphism whose cokernel is killed by $E$.  

\begin{prop}\label{prop-Mequi}
	There is an equivalence $$\CF: \mathrm{BT}_\fS^\varphi \lra \text{(windows over $\CB$)}$$ such that $\CF\circ\fM(-)$ is the equivalence $\CB(-)$ in \eqref{eq-equivB}. In particular, $\fM(-)$ is an anti-equivalence. 
\end{prop}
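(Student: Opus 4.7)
The plan is to construct $\CF$ explicitly and then to verify the compatibility $\CF(\fM(\sG))=\CB(\sG)$ for every $p$-divisible group $\sG$ over $R$. Once the compatibility is in hand, the second assertion follows formally: $\CB$ is an anti-equivalence by Theorem~\ref{thm-classify} together with the crystallinity of $\kappa$, and $\CF$ will be shown to be a covariant equivalence, so $\fM=\CF\inverse\circ\CB$ is automatically an anti-equivalence.

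First I would construct $\CF$ on objects. Given a Breuil-Kisin module $(\fM,\varphi_\fM)$ of $E$-height one, set $M:=\fM$ and define $F:M\ra M$ to be the $\varphi$-linear operator with linearisation $\varphi_\fM$, i.e.\ $F(m)=\varphi_\fM(1\otimes m)$. The height-one hypothesis supplies a unique $\psi_\fM:\fM\ra\varphi^*\fM$ with $\varphi_\fM\psi_\fM=E\cdot\id_\fM$ and $\psi_\fM\varphi_\fM=E\cdot\id_{\varphi^*\fM}$, and using it one selects a normal decomposition $\fM=L\oplus T$ adapted to $\varphi_\fM$. Set $M_1:=L\oplus ET$; then $F_1:M_1\ra M$ is forced to be the unique $\varphi$-linear map satisfying $F_1(Em)=F(m)$ for $m\in M$ and matching a chosen splitting on $L$, and one checks using the normal decomposition that $F_1^\sharp:\varphi^*M_1\ra M$ is an isomorphism. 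A quasi-inverse is given by $(M,M_1,F,F_1)\mapsto(M,F^\sharp)$; the $E$-height-one property of $F^\sharp$ follows from the identity $F^\sharp V^\sharp=\theta=\varphi(E)$ together with a normal-decomposition argument absorbing the unit discrepancy between $E$ and $\varphi(E)$ into the data. Functoriality in both directions is clear, and unwinding normal decompositions shows the two functors are mutually inverse.

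To verify $\CF(\fM(\sG))=\CB(\sG)$, the essential input is that Kisin's construction of $\fM(\sG)$ in Theorem~1.4.2 is compatible with the \Dieudonne crystal: the base change of $\fM(\sG)$ along $\kappa:\fS\ra\BW(R)$ is canonically identified, Frobenius-equivariantly and filtration-preservingly, with $\BD(\sG)(\BW(R))$, which by Theorem~\ref{thm-classify}(2) is the underlying $\BW(R)$-module of $\Theta_R(\sG)$. Unwinding the constructions then identifies $\kappa_*(\CF(\fM(\sG)))$ with $\Theta_R(\sG)$ as a window over $\CD_R$, and since $\CB(\sG)=\kappa_*\inverse\Theta_R(\sG)$ by definition, we obtain the desired equality of windows over $\CB$.

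The hard part will be this crystalline compatibility step: one must trace the Hodge filtration and divided Frobenius from Kisin's $\fS$-theoretic construction to Lau's $\BW(R)$-theoretic \Dieudonne display through the frame map $\kappa$, reconciling the differing appearances of $E$ and $\varphi(E)$ that come from the identity $\theta=\varphi(E)$ for the Breuil frame. The construction of $\CF$ and its quasi-inverse is otherwise largely formal and closely parallels the standard situation for odd $p$, where the correspondence between Breuil-Kisin modules and windows over the Breuil frame is well-documented.
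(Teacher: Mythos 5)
Your construction of $\CF$ does not work, and the source of the trouble is taking the underlying $\fS$-module of the window to be $M:=\fM$ rather than $M:=\varphi^*\fM$. With your choice, $F^\sharp$ is the linearisation $\varphi_\fM$, whose cokernel is killed by $E$; but in any window over $\CB$ one has $F^\sharp V^\sharp=\theta=\varphi(E)$, so $\coker F^\sharp$ must be killed by $\varphi(E)$, and $E$ and $\varphi(E)$ are \emph{not} associates in $\fS=W(k)[[u]]$ (they have $u$-degrees $e$ and $pe$), so there is no ``unit discrepancy'' to absorb. One sees the failure already on the rank-one example $\fM=\fS$, $\varphi_\fM=E$: the window axiom forces $F_1(Em)=\varphi_1(E)F(m)=F(m)=E\varphi(m)$, and then for $M_1=\fS$ a $\varphi$-linear $F_1$ would need $\varphi(E)F_1(1)=E$, i.e.\ $F_1(1)=E/\varphi(E)\notin\fS$, while for $M_1=E\fS$ one gets $F_1(M_1)\subset E\fS\subsetneq\fS$, so $F_1(M_1)$ does not generate. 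Your proposed quasi-inverse $(M,M_1,F,F_1)\mapsto(M,F^\sharp)$ fails for the same reason: $\coker F^\sharp$ is $\varphi(E)$-torsion, not $E$-torsion, so it does not land in $\mathrm{BT}^\varphi_\fS$.

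The paper's construction (following Lau and Kim) instead takes $M:=\varphi^*\fM$, $M_1:=V_\fM(\fM)$ where $V_\fM$ is the unique map with $\varphi_\fM V_\fM=E$, and $F_1(V_\fM(x))=1\otimes x$; then $F^\sharp=\varphi^*(\varphi_\fM)$, whose cokernel is $\varphi^*(\coker\varphi_\fM)$ and hence \emph{is} $\varphi(E)$-torsion. This Frobenius twist is also precisely what makes $\fM\circ\CF=\CB$ possible: by Corollary~\ref{coro-Mequi}, $\Theta_R(\sG)\simeq\varphi^*\fM(\sG)\otimes_{\fS,\kappa}\BW(R)$, so the $\BW(R)$-module underlying $\kappa_*\CF(\fM(\sG))$ must be $\varphi^*\fM(\sG)\otimes\BW(R)$, not $\fM(\sG)\otimes\BW(R)$ as your construction would produce. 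Finally, $M_1=V_\fM(\fM)$ is intrinsic to $(\fM,\varphi_\fM)$, whereas your $M_1$, defined via ``a normal decomposition adapted to $\varphi_\fM$'', is a non-canonical choice and would not be functorial in $\fM$.
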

%\begin{remark}
%	When $p>2$, the anti-equivalence of $\fM$ is shown in \cite[Theorem 1.4.2]{kisin2010integral}. For $p=2$, this is done in \cite{lau2014relations}. See also independent proofs in \cite{kim2012classification} and \cite{liu2013correspondence}.  
%\end{remark}

\begin{proof}
	The proposition is implicitly contained in \cite[\S 6, 7]{lau2010frames} (see also \cite[\S 2]{kim20162}). To a Breuil-Kisin module $(\fM,\varphi_\fM)$ in $\mathrm{BT}_\fS^\varphi$, we can associate a triple $(M, M_1, F_1)$, where $M\coloneqq \varphi^*\fM$; $M_1\coloneqq \fM$, viewed as a submodule of $M$ via the unique map $V_\fM: \fM\ra \varphi^*\fM$ whose composition with $\varphi_\fM$ is the multiplication by $E(u)$; and $F_1: M_1\ra M$ is given by $x\in\fM\mapsto 1\otimes x\in \varphi^*\fM$. Then we see \begin{flalign}
		   E(u)M\sset M_1\sset M.    \label{incl}
	\end{flalign} 
	Define $F: M\ra M$ by sending $m\in M$ to $F_1(E(u)m)$. Then $(M,M_1,F,F_1)$ defines a window over $\CB$. Hence, we obtain a functor $$\CF: \mathrm{BT}_\fS^\varphi\lra \text{(windows over $\CB$)}.$$ The functor $\CF$ is an equivalence (cf. \cite[Lemma 8.2, 8.6]{lau2010frames}). Its inverse can be described as follows. Let $(M,M_1,F,F_1)$ be a window over $\CB$. The $\fS$-module $M_1$ is necessarily free, and hence the surjection $F_1^\#: \varphi^*M_1\ra M$ is an isomorphism. Let $\phi: M_1\hookrightarrow \varphi^*M_1$ denote the composition of the inclusion $M_1\hookrightarrow M$ with the inverse of $F_1^\#$. There is a unique $\fS$-linear map $\psi: \varphi^*M_1\ra M_1$ such that $\psi\phi=E(u)$. Then $(M_1,\psi)$ defines an object in $\mathrm{BT}_\fS^\varphi$ and the functor $(M,M_1,F,F_1)\mapsto (M_1,\psi)$ is the inverse of $\CF$.
	Going through the proof of \cite[Theorem 2.12]{kim20162}, we have $$\CF\circ\fM(-) =\CB(-).$$ In particular, $\fM(-)$ is also an equivalence. 
\end{proof}

\begin{defn}
	For $(\fM,\varphi_{\fM})\in \mathrm{BT}_\fS^\varphi$, the Hodge filtration of $\varphi^*\fM$ is defined as \[\fM/E(u)\varphi^*\fM \sset \varphi^*\fM/E(u)\varphi^*\fM,\]
	where the inclusion is induced by \eqref{incl}.
\end{defn}

\begin{corollary}\label{coro-Mequi}
	Let $\sG$ be a $p$-divisible group over $\CO_K$. 
	\begin{enumerate}
		\item There exists a natural isomorphism  \begin{flalign*}
	   \Theta_{\CO_K}(\sG)\simeq \varphi^* \fM(\sG) \otimes_{\fS,\kappa} \BW(\CO_K)
\end{flalign*} as \Dieudonne displays over $\CO_K$.
        \item There exists a natural isomorphism \begin{flalign*}
        	    \BD(\sG)(\CO_K)\simeq \varphi^*\fM(\sG)\otimes_{\fS}\CO_K=\varphi^*\fM/E(u)\varphi^*\fM,
        \end{flalign*}
        which respects the Hodge filtrations on both sides. 
	\end{enumerate}   
\end{corollary}
\begin{proof}
	(1) It follows from the equality $\CF\circ\fM(-)=\CB(-)$ in Proposition \ref{prop-Mequi} and the definition of base change of \Dieudonne displays.  
	
	(2) Denote by $\psi$ the isomorphism in (1). By base change of $\psi$ along the natural surjection $\BW(\CO_K)\ra \CO_K$, we obtain an isomorphism \[\Theta_{\CO_K}(\sG)/\BI_{\CO_K}\Theta_{\CO_K}(\sG) \simeq \varphi^*\fM(\sG)/E(u)\varphi^*\fM.\]
	    Since $\psi$ is an isomorphism of \Dieudonne displays, the above isomorphism respects the Hodge filtrations. By Theorem \ref{thm-classify} (2) and (3), we obtain an isomorphism \begin{flalign*}
        	    \BD(\sG)(\CO_K)\simeq \varphi^*\fM(\sG)\otimes_{\fS}\CO_K=\varphi^*\fM/E(u)\varphi^*\fM
        \end{flalign*} respecting the Hodge filtrations. 

\end{proof}

\section{Deformation theory} \label{sec-defm}
In this section, we extend the deformation theory of $p$-divisible groups in \cite[\S 3]{kisin2018integral} to the case $p=2$. We also generalize the notion of very good Hodge embeddings for $p=2$, allowing us to construct versal deformation of $p$-divisible groups with crystalline tensors (see Proposition \ref{prop-torsor}). In Proposition \ref{prop-adapted}, we establish a criterion for determining when a deformation is $(\CG_W,\mu_y)$-adapted in the sense of Definition \ref{defn-adapted}.

\subsection{Versal deformations of $p$-divisible groups} \label{subsec-versaldef}
The notations are as in \S \ref{sec-lau}. In this subsection, we aim to extend the construction of the versal deformation space of $p$-divisible groups in \cite[\S 3.1]{kisin2018integral} to the case $p=2$. 

Firstly we generalize \cite[Theorem 3,4]{zink2001dieudonne}, which deals with the case when $R$ has residue characteristic $p>2$ or $2R=0$. 
\begin{thm}\label{thm-34}
	 Let $k$ be a perfect field of characteristic $p$. Let $(S\ra R, \delta)$ be a nilpotent divided power extension of artinian local rings of residue field $k$, i.e., the kernel $\fa$ of the surjection $S\ra R$ is equipped with nilpotent divided powers $\delta$. 
	\begin{enumerate}
		\item Let $\CP=(M,M_1,F,F_1)$ be a \Dieudonne display over $S$ and $\ol{\CP}=(\ol{M},\ol{M}_1,F,F_1)$ be the reduction of $\CP$ over $R$. Denote by $\wh{M}_1$ the inverse image of $\ol{M}_1$ under the homomorphism $$M\ra \ol{M}=\BW(R)\otimes_{\BW(S)}M.$$ Then $F_1:M_1\ra M$ extends uniquely to a $\BW(S)$-module homomorphism $$\wh{F}_1: \wh{M}_1\ra M$$ such that $\wh{F}_1 (\fa M)=0$. Therefore, $\wh{F}_1$  restricted to $\wh{W}(\fa)M$ is given by $$\wh{F}_1([a_0,a_1,\ldots]x)=[w_0(u_0\inverse)a_1, w_1(u_0\inverse)a_2,\ldots]F(x)$$ in logarithmic coordinates. 
		\item Let $\CP=(M,M_1,F,F_1)$ (resp. $\CP'=(M',M'_1,F',F_1')$) be a \Dieudonne display over $S$. Let $\ol{\CP}$ (resp. $\ol{\CP}'$) be the reduction over $R$. Assume that $\ol{u}: \ol{\CP}\ra \ol{\CP}'$ is a morphism of \Dieudonne displays over $R$. Then there exists a unique morphism of quadruples $$u: (M,\wh{M}_1, F, \wh{F}_1) \ra (M',\wh{M}'_1, F', \wh{F}'_1)  $$ lifting $\ol{u}$. Hence, we can associate a crystal to a \Dieudonne display as follows: Let $\CP=(M,M_1,F,F_1)$ be a \Dieudonne display over $R$, $(T\ra R, \delta)$ be a divided power extension, then define the \dfn{\Dieudonne crystal} $\BD(\CP)$ evaluated at $(T\ra R,\delta)$ as $$\BD(\CP)(T)\coloneqq T\otimes_{w_0,\BW(T)}\wt{M}, $$ where $\wt{\CP}=(\wt{M}, \wt{M}_1, \wt{F}, \wt{{F}}_1)$ is any lifting of $\CP$ over $T$. 
		\item Let $\CC$ be the category of all pairs $(\CP, Fil)$, where $\CP$ is a \Dieudonne display over $R$ and $Fil\sset \BD(\CP)(S)$ is a direct summand lifting the Hodge filtration $M_1/\BI_R M\hookrightarrow M/\BI_R M$ of $\BD(\CP)(R)$. Then the category $\CC$ is canonically isomorphic to the category of \Dieudonne displays over $S$. 
	\end{enumerate}
\end{thm}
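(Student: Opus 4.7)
The plan is to follow the strategy of Zink's Theorems 3 and 4 in \cite{zink2001dieudonne}, adapting it to account for the twisted Verschiebung $\BV(x) = V(u_0 x)$ present when $p = 2$. The compensating factors $w_n(u_0^{-1})$ in the stated formula for $\wh F_1$ are precisely what force the extension to respect the defining relation $F_1(\BV(w) m) = w F(m)$ on its natural domain.

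For part (1), the first step is to identify $\wh M_1 = M_1 + \wh W(\fa) M$, using that $\ker(\BW(S) \to \BW(R)) = \wh W(\fa)$ and that $M_1 \twoheadrightarrow \ol M_1$ is surjective (since the Hodge filtration of a \Dieudonne display is a direct summand). Uniqueness of $\wh F_1$ is then forced by $\varphi$-linearity together with the conditions $\wh F_1|_{M_1} = F_1$ and $\wh F_1(\fa M) = 0$. For existence, I would define $\wh F_1$ on $\wh W(\fa) M$ by the stated formula --- a direct check on ghost components shows it is well defined and $\varphi$-linear --- and verify consistency with $F_1$ on $M_1 \cap \wh W(\fa) M$. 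The core identity to check is $\wh F_1(\BV(w) m) = w F(m) = F_1(\BV(w) m)$ for $w \in \wh W(\fa)$ and $m \in M$: if $w$ has log-coordinates $[y_0, y_1, \ldots]$, then $\BV(w) = V(u_0 w)$ has log-coordinates $[0, w_0(u_0) y_0, w_1(u_0) y_1, \ldots]$, and applying the formula returns $[w_0(u_0^{-1}) w_0(u_0) y_0, \ldots] F(m) = w F(m)$, as required.

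For part (2), given two $\BW(S)$-linear lifts $u, u'$ of $\ol u$ that commute with $F$ and $\wh F_1$, the difference $u - u'$ takes values in $\wh W(\fa) M'$; the intertwining with $\wh F_1$ combined with the explicit formula then forces the difference to vanish, giving uniqueness. Existence proceeds by induction along a filtration of $\fa$ by smaller PD-subideals, using the formula to write down candidate lifts whose failure to commute with $\wh F_1$ is controlled modulo the next stage. The crystal interpretation then follows formally: two lifts $\wt M^{(1)}, \wt M^{(2)}$ of $\CP$ over $T$ are canonically identified by lifting the identity on $\CP$ via the uniqueness just established, so $T \otimes_{w_0, \BW(T)} \wt M^{(i)}$ is independent of the choice of lift.

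For part (3), given $(\CP, Fil)$, I would choose a free $\BW(S)$-module $\wt M^0$ lifting $M$ together with a $\varphi$-linear $\wt F^0$ lifting $F$, and apply part (1) to produce $\wh F_1^0$. Then I would take $\wt M_1 \subset \wt M^0$ to be a $\BW(S)$-submodule containing $\BI_S \wt M^0$ whose image in $\BD(\CP)(S) = S \otimes_{w_0, \BW(S)} \wt M^0$ is $Fil$, and set $\wt F_1 := \wh F_1^0|_{\wt M_1}$. The generation condition for $\wt F_1$ holds by Nakayama, since it already does modulo the maximal ideal of $\BW(S)$. The inverse functor is $\wt \CP \mapsto (\CP, \image{\wt M_1 \to \BD(\wt \CP)(S)})$, and essential uniqueness of $\wt \CP$ for fixed $(\CP, Fil)$ is supplied by (2). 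The main obstacle I foresee is consistently tracking the $u_0$-twist throughout --- ensuring the formula for $\wh F_1$ remains compatible with $\BV$ rather than $V$, and reconciling the divided-power structure on $\fa \subset \BW(S)$ with this twist --- which replaces the cleaner argument available when $p > 2$, where $\BV = V$ and $\BI_R \subset \BW(R)$ carries honest divided powers.
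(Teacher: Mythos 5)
Your overall strategy --- follow Zink's argument for Theorems~3,4 of \cite{zink2001dieudonne} while inserting the unit $u_0$ wherever $V$ is replaced by $\BV$ --- matches the paper's, and your verification that the factors $w_n(u_0^{-1})$ make the stated formula compatible with $F_1(\BV(w)m) = wF(m)$ is correct. However, there is a genuine gap in your proof of part (1). The paper proves existence and uniqueness by fixing a normal decomposition $M = L \oplus T$, noting that then $\wh M_1 = \fa T \oplus L \oplus \BI_S T = \fa T \oplus M_1$ is a \emph{direct} sum, setting $\wh F_1 = 0$ on $\fa T$ and $\wh F_1 = F_1$ on $M_1$, and only afterwards derives the formula you use as your definition. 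You instead define $\wh F_1$ on $\wh W(\fa)M$ directly by the formula and attempt to patch with $F_1$ on the overlap $M_1 \cap \wh W(\fa)M$. But that overlap equals $\fa M_1 \oplus \BV(\wh W(\fa))M$, and your ``core identity'' only treats the $\BV(\wh W(\fa))M$ summand. On $\fa M_1$ your formula outputs $0$, so you must show $F_1(\fa M_1) = 0$, which holds because $\varphi(\fa) = 0$ (the ideal $\fa\hookrightarrow W(S)$ has log-coordinates $[a,0,0,\ldots]$ and $\varphi$ acts as a shift $[a_0,a_1,\ldots]\mapsto[pa_1,pa_2,\ldots]$). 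You never invoke this, and it is exactly the point at which the divided-power hypothesis enters; the paper flags it explicitly.

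In part (2), the uniqueness step ``the intertwining combined with the explicit formula forces the difference to vanish'' hides the real mechanism: one writes $u = (\wh F_1'^N)^{\#} \circ (1\otimes u) \circ (V^N)^\sharp$ and uses that $\wh F_1'^N$ annihilates $u(M)\subset\wh W(\fa)M'$ once $N$ exceeds the log-support of the images of a basis, because $N$-fold iteration of the formula in (1) shifts log-coordinates by $N$. This nilpotence is the heart of both Zink's proof and the paper's, and needs to be stated. Your existence route (induction on a filtration of $\fa$ by PD-subideals) is genuinely different from the paper's, which simply repeats Zink's argument using the same nilpotence; without more detail it cannot be checked. In part (3), you propose to apply part (1) after lifting only the pair $(M,F)$ to $(\wt M^0, \wt F^0)$, but part (1) requires a full display $(\wt M^0,\wt M_1^0,\wt F^0,\wt F_1^0)$ as input. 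The implicit fix in the paper is to lift a normal representation $(L,T,\Psi)$ of $\CP$ from $\BW(R)$ to $\BW(S)$, which is possible since $\BW(S)\to\BW(R)$ is surjective; after that your construction of $\wt M_1$ from $Fil$ and the Nakayama argument go through.
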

\begin{remark}
	The above theorem has a reformulation in terms of \dfn{relative \Dieudonne displays} as in \cite[\S 2D, 2F]{lau2014relations}: the quadruple $(M,\wh{M}_1,F,\wh{F}_1)$ defines a window over the \dfn{relative \Dieudonne frame} $\CD_{S/R}$. 
\end{remark}

\begin{proof}
    The proof adapts arguments in \cite[Theorem 3,4]{zink2001dieudonne} and \cite[Lemma 38, 42]{zink2002display}, with adjustments for $\BV$.
    
	(1) Choose a normal decomposition $M=L\oplus T$. Then $$\wh{M}_1=\wh{W}(\fa)M+M_1=\fa T\oplus L\oplus \BI_ST.$$ Using this decomposition, we can extend $F_1$ by setting $\wh{F}_1(\fa T)=0$.  We claim that $\wh{F}_1(\fa L)=0$. Note that by formula (\ref{eq-Frob}), we have $\varphi(\fa)=0$. Since $F_1$ is $\varphi$-linear, we have $\wh{F}_1(\fa L)=\varphi(\fa)\wh{F}_1(L)=0$. Thus, the extension $\wh{F}_1$ satisfies $\wh{F}_1(\fa M)=0$. It is unique since $\wh{M}_1=\wh{W}(\fa)M+M_1=\fa M+M_1$. For any $[a_0,a_1,\ldots]\in \wh{W}(\fa)$ and $x\in M$, we have \begin{flalign*}
		   \wh{F}_1([a_0,a_1,\ldots]x) &=\wh{F}_1([a_0,0,0,\ldots]x)+\wh{F}_1(V[a_1,a_2,\ldots]x) \\ &=0+ F_1(\BV(u_0\inverse[a_1,\ldots])x)= F_1(\BV([w_0(u_0\inverse)a_1, w_1(u_0\inverse)a_2,\ldots])x) \\ &= [w_0(u_0\inverse)a_1, w_1(u_0\inverse)a_2,\ldots] F(x).
	\end{flalign*}

	(2) For the uniqueness of $u$, it is enough to consider the case $\ol{u}=0$. Recall that for a \Dieudonne display $(M,M_1,F,F_1)$ over $S$, we have defined the map $V^\sharp: M\ra \BW(S)\otimes_{\varphi,\BW(S)}M$ in (\ref{eq-Vsharp}). For any integer $N\geq 1$, we define $(V^N)^\sharp: M\ra M\otimes_{\varphi^N,\BW(S)}M$ as the composite \begin{flalign*}
		    M\xrightarrow{V^\sharp} \BW(S)\otimes_{\varphi,\BW(S)}M \xrightarrow{1\otimes V^\sharp} \BW(S)\otimes_{\varphi^2,\BW(S)}M\ra\cdots\ra \BW(S)\otimes_{\varphi^N,\BW(S)}M.
	\end{flalign*} Similarly, we can define maps $(F_1^N)^\#$ and $(\wh{F}_1^N)^\#$.
	As in the proof of \cite[Theorem 3]{zink2001dieudonne}, we have a commutative diagram $$\xymatrix{
	     M\ar[d]_{(V^N)^\sharp} \ar[r]^-u &\wh{W}(\fa)M' \\ \BW(S)\otimes_{\varphi^N,\BW(S)}M \ar[r]^-{1\otimes u} &\BW(S)\otimes_{\varphi^N,\BW(S)}\wh{W}(\fa) M' \ar[u]_{(\wh{F}_1^{'N} )^\#}
	}$$  By (1), for $[a_0,a_1,\ldots]\in \wh{W}(\fa)$ and $x\in M'$, we have $$\wh{F}_1^{'N}([a_0,\ldots]x)=[\prod_{i=0}^{N-1}w_i(u_0\inverse) a_N,\prod_{i=1}^Nw_i(u_0\inverse) a_{N+1},\ldots]F'^{N}(x).$$ Since $a_i=0$ for almost all $i$, $\wh{W}(\fa)M'$ is annihilated by $\wh{F}_1^{'N}$ for sufficiently large $N$. This shows $u=0$ as desired. 
	
	For the existence of $u$, we can repeat the proof of \cite[Theorem 3]{zink2001dieudonne}.  
	
	(3) Clearly we can get a lifting of the Hodge filtration of $\BD(\CP)(R)$ from a \Dieudonne display over $S$.  On the other hand, given $(\CP, Fil)\in\CC$, any lifting of $\CP$ to $S$ gives a unique quadruple $(M,\wh{M}_1,F,\wh{F}_1)$ by (2).
	 Let $M_1\sset \wh{M}_1$ be the inverse image of $Fil\sset M/\BI_SM $ under the projection $M\ra M/\BI_S{M}$, then we obtain a \Dieudonne display $(M,{M}_1,{F}, \wh{F}_1|_{{M}_1})$ over $S$. By (2), these two constructions are mutually inverse.
\end{proof}

Now we fix a $p$-divisible group $\sG_0$ over $k$, and let $(\BD, \BD_1,F,F_1)$ be the corresponding \Dieudonne display. Note that $\BD$ is given by $\BD(\sG_0)(W)$, see \cite[Corollary 2.34]{lau2014relations}. By Lemma \ref{lem-Psi}, the \Dieudonne display $(\BD,\BD_1,F,F_1)$ corresponds to a triple $(\BD,\BD_1,\Psi_0)$ for an isomorphism $\Psi_0: \wt{\BD}_1\simto \BD$. Next we will construct a versal deformation space of $\sG_0$, equivalently a versal deformation space of the \Dieudonne display $(\BD,\BD_1,\Psi_0)$.

Recall there is a canonical Hodge filtration on $\BD\otimes_W k=\BD(\sG_0)(k)$: $$0\ra \Hom_k(\Lie\sG_0, k)\ra \BD\otimes_W k \ra \Lie{\sG_0^*}\ra 0.$$ We think of $\BD\otimes_W k$ as a filtered $k$-module by setting $\Fil^0(\BD\otimes_W k)=\BD\otimes_W k$, $\Fil^1(\BD\otimes_W k)= \Hom_k(\Lie\sG_0, k)$.
This filtration corresponds to a parabolic subgroup $P_0\sset \GL(\BD\otimes_W k)$. Fix a lifting of $P_0$ to a parabolic subgroup $P\sset \GL(\BD)$. Write \begin{flalign}
	    M^\loc=\GL(\BD)/P \text{\ and\ } \wh{M}^\loc=\Spf R,  \label{eq-Mloc}
\end{flalign} where $\wh{M}^\loc$ is the completion of $\GL(\BD)/P$ along the image of the identity in $\GL(\BD\otimes_W k)$. Then $R$ is a power series ring over $W$. 

Set $M=\BD\otimes_W\BW(R)$, and let $\ol{M}_1\sset M/\BI_RM$ be the direct summand corresponding to the parabolic subgroup $gPg\inverse\sset \GL(\BD)$ over $\wh{M}^\loc$, where $g\in (\GL(\BD)/P)(R)$ is the universal point. Let $M_1\sset M$ be the preimage of $\ol{M}_1$ in $M$ and $\Psi:\wt{M}_1\overset{\sim}{\ra} M $ be a $\BW(R)$-module isomorphism reducing to $\Psi_0$ modulo $\fm_R$, where $\wt{M}_1$ is defined as in Lemma \ref{lem-Psi}. Then the triple $$(M,M_1,\Psi)$$ gives a \Dieudonne display over $R$ reducing to $(\BD,\BD_1,\Psi_0)$. By Theorem \ref{thm-classify}, the \Dieudonne display $(M,M_1,\Psi)$ corresponds to a $p$-divisible group $\sG_R$ over $R$, which is a deformation of $\sG_0$. 

Set $\fa_R\coloneqq \fm_R^2+pR$. By Lemma \ref{lem-connection}, there exists a natural connection isomorphism $$c: \wt{\BD}_1\otimes_W\BW(R/\fa_R)\simto \wt{M}_1\otimes_{\BW(R)}\BW(R/\fa_R).$$

\begin{defn}
	The map $\Psi$ is said to be \dfn{constant modulo $\fa_R$} if the composite map $$\wt{\BD}_1\otimes_W\BW(R/\fa_R)\xrightarrow{c} \wt{M}_1\otimes_{\BW(R)}\BW(R/\fa_R)\xrightarrow{\Psi\otimes 1} M_{R/\fa_R}\simeq \BD\otimes_W\BW(R/\fa_R) $$ is equal to $\Psi_0\otimes 1$. 
\end{defn} 
\begin{lemma} \label{lem-versal}
	If $\Psi$ is constant modulo $\fa_R$, then the deformation $\sG_R$ of $\sG_0$ is versal.
\end{lemma}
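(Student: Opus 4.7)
The plan is to verify versality by showing that the induced Kodaira--Spencer map on tangent spaces
\[
\KS:\ T_0 \wh{M}^{\loc} \lra T_0 \Def_{\sG_0}
\]
coming from the family $\sG_R$ is an isomorphism. Since both $\wh{M}^{\loc}=\Spf R$ and the deformation functor $\Def_{\sG_0}$ are formally smooth over $W$ of the same relative dimension, namely $\dim_k \Fil^1(\BD\otimes_W k)\cdot \dim_k \bigl((\BD\otimes_W k)/\Fil^1\bigr)$, bijectivity on tangent spaces implies the result.

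First I would identify the two tangent spaces. On the source side, $T_0\wh{M}^{\loc}$ is the tangent space of $\GL(\BD)/P$ at the identity coset, which is canonically $\Hom_k\bigl(\Fil^1(\BD\otimes_W k),(\BD\otimes_W k)/\Fil^1\bigr)$. On the target side, by Lau's anti-equivalence (Theorem~\ref{thm-classify}), a first-order deformation of $\sG_0$ is the same as a \Dieudonne display over $k[\epsilon]$ lifting $(\BD,\BD_1,\Psi_0)$. Applying Theorem~\ref{thm-34}(3) to the nilpotent PD extension $k[\epsilon]\to k$, where $(\epsilon)$ carries the trivial divided powers, such displays correspond bijectively to liftings of the Hodge filtration $\Fil^1(\BD\otimes_W k)\hookrightarrow \BD\otimes_W k$ to direct summands of $\BD\otimes_W k[\epsilon]$, which form a torsor under the same $\Hom_k$-space.

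Next I would analyze $\KS$. Any tangent vector $v:R\to k[\epsilon]$ factors through $R/\fa_R$, because its kernel contains $\fm_R^2$ (since the target has square-zero maximal ideal) and $p$ (since $v$ is a $k$-algebra map). Hence $v^*\sG_R$ is determined by the reduction $(M,M_1,\Psi)\otimes_R R/\fa_R$, and we may read it off via the canonical identification $\wt{M}_1\otimes_{\BW(R)}\BW(R/\fa_R)\simeq \wt{\BD}_1\otimes_W\BW(R/\fa_R)$ displayed above. By construction of $M_1$ via the universal point $g\in(\GL(\BD)/P)(R)$, the image of $M_1\otimes_R k[\epsilon]$ in $\BD\otimes_W k[\epsilon]$ is exactly the Hodge filtration deformed in the direction $v$, and this gives a bijection between tangent vectors $v$ and lifts of the Hodge filtration to $k[\epsilon]$. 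The hypothesis that $\Psi$ is constant modulo $\fa_R$ is precisely what forces the $\Psi$-component of the reduction to become $\Psi_0\otimes 1$ under the identifications of the canonical diagram, so that under Theorem~\ref{thm-34}(3) the pair associated to $v^*\sG_R$ is $\bigl((\BD,\BD_1,\Psi_0),\ v\text{-deformed Hodge filtration}\bigr)$. Thus $\KS$ is the identity under the two explicit descriptions of the tangent spaces.

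The main obstacle is the last compatibility: one must confirm that the isomorphism in the canonical diagram matches, through Theorem~\ref{thm-34}(3), the constancy of $\Psi$ modulo $\fa_R$ with the statement that the underlying \Dieudonne display of the reduction $v^*\sG_R$ is the trivial lift $(\BD,\BD_1,\Psi_0)\otimes_W \BW(k[\epsilon])$, with only the Hodge filtration varying. Once this is verified, bijectivity of $\KS$ reduces to the tautological bijection between tangent directions at the identity of $\GL(\BD)/P$ and lifts of the Hodge filtration, and versality of $\sG_R$ follows from the equality of dimensions and formal smoothness of both $\wh{M}^{\loc}$ and $\Def_{\sG_0}$.
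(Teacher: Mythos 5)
Your proposal is essentially the same as the paper's, just repackaged: the paper argues directly that the classifying map $R^\univ\to R$ from the versal deformation ring is an isomorphism on tangent spaces, while you frame this as bijectivity of the Kodaira--Spencer map; these are equivalent, and the technical core (factoring any tangent vector through $R/\fa_R$, applying Theorem~\ref{thm-34}(3) to the trivial nilpotent PD extension $k[\epsilon]\to k$, identifying tangent directions with lifts of the Hodge filtration, and using the constancy of $\Psi$ modulo $\fa_R$) is identical. The "main obstacle" you flag at the end is precisely the step the paper handles by invoking the analogue of \cite[Lemma 3.1.12]{kisin2018integral}: constancy of $\Psi$ modulo $\fa_R$ forces $\wh{F}_{R,1}=\wh{F}_1$ on $\wh{M}_{R/\fa_R,1}$, so the quadruple $(M_{R/\fa_R},\wh{M}_{R/\fa_R,1},F_{R/\fa_R},\wh{F}_{R/\fa_R,1})$ coincides with the one coming from the constant display $(\BD,\BD_1,F,F_1)$; only the Hodge filtration $M_1$ varies. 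Filling that in would complete your argument and yield exactly the paper's proof.
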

\begin{proof}
	Recall that there exists a versal deformation ring $R^\univ$ for $\sG_0$, which is a power series ring over $W$ of the same dimension as $R$. The deformation $\sG_R$ is induced by a map $R^\univ\ra R$. We want to show this is an isomorphism. It suffices to prove that the induced map on tangent spaces is an isomorphism. 
	
	We have two \Dieudonne displays over $R/\fa_R$. One is obtained from $(M,M_1,F_R,F_{R,1})$ (the \Dieudonne display corresponding to $(M,M_1,\Psi)$) by the base change along $R\ra R/\fa_R$, the other is obtained from $(\BD,\BD_1,F,F_1)$ by the base change along $k\ra R/\fa_R$.  

	If $\Psi$ is constant modulo $\fa_R$, then as in the proof of \cite[Lemma 3.1.12]{kisin2018integral}, we know $\wh{F}_{R,1}=\wh{F}_{1}$ on $\wh{M}_{R/\fa_R,1}$, see the notation in Theorem \ref{thm-34}. Hence, these two \Dieudonne displays give rise to the same quadruple $$(M_{R/\fa_R}, \wh{M}_{R/\fa_R,1},F_{R/\fa_R}, \wh{F}_{R/\fa_R,1}).$$
	
	Let $\sG$ be a deformation over the ring $k[\epsilon]$ of dual numbers. Since $k[\epsilon] \ra k$ has trivial divided powers, it is a nilpotent divided power extension, then by Theorem \ref{thm-34} (1) and (2), the base change of $(\BD,\BD_1,F,F_1)$ along $k\ra k[\epsilon]$ gives rise to a quadruple $(M_{k[\epsilon]},\wh{M}_{k[\epsilon],1},F_{k[\epsilon]}, \wh{F}_{k[\epsilon],1})$. By the proof of Theorem \ref{thm-34} (3), the \Dieudonne display corresponding to $\sG$ is of the form $$ (M_{k[\epsilon]},\wt{\Fil},F_{k[\epsilon]}, \wh{F}_{k[\epsilon],1}),$$ where $\wt{\Fil}\sset\wh{M}_{k[\epsilon],1}$ is the preimage of certain lifting $\Fil\sset (\BD\otimes_Wk)\otimes_kk[\epsilon]$ of the Hodge filtration of $\BD$. From the versality of the filtration $\ol{M}_1\sset \BD\otimes_W R$, there is a map $\alpha: R\ra k[\epsilon]$ (necessarily factors through $R/\fa_R$) such that the induced map $\BD\otimes_W R\ra \BD\otimes_Wk[\epsilon]$ sends $\ol{M}_1$ to $\Fil$. Then by the discussion in the previous paragraph, $(M_{k[\epsilon]},\wt{\Fil},F_{k[\epsilon]}, \wh{F}_{k[\epsilon],1})$ is the base change of $(M,M_1,F_R,F_{R,1})$ along $\alpha$. Thus, $\sG$ is the base change of $\sG_R$ along $\alpha$. In particular, $R^\univ\ra R$ induces an isomorphism of tangent spaces.  Hence, we proved that $\sG_R$ is versal.
\end{proof}

\begin{remark}
	Note that the functor $\CF\coloneqq \ud{\Isom}(\wt{M}_1,M)$ of isomorphisms of finite free $\BW(R)$-modules between $\wt{M}_1$ and $M$ is a $\GL(M)$-torsor over $\BW(R)$. Hence, the surjection $\BW(R)\twoheadrightarrow\BW(R/\fa_R)$ induces a surjection $\CF(\BW(R))\twoheadrightarrow \CF(\BW(R/\fa_R))$. This implies that an isomorphism $\Psi$, which is constant modulo $\fa_R$, always exists. 
\end{remark}

\subsection{Local models and local Hodge embeddings} \label{subsec-localmod}
Before discussing the deformation of $p$-divisible groups with crystalline tensors, we will make a digression into local models and local Hodge embeddings in this subsection.

%\subsubsection{Local Hodge embeddings}
Let $p$ be a prime number and $F$ be a finite field extension of $\BQ_p$ or $\breve\BQ_p$. 
%Here, we denote by $\breve K$ the (completion of) maximal unramified field extension of some finite extension $K$ of $\BQ_p$.  
For a connected reductive group $G$ over $F$, let $\CB(G,F)$ denote the associated (extended) Bruhat-Tits building, which carries an action of $G(F)$.  For $x\in \CB(G,F)$, the associated \dfn{Bruhat-Tits stabilizer group scheme} $\CG_x$, in the sense of \cite{bruhat1984groupes}, is a smooth affine group scheme  over $\CO_F$ such that the generic fiber of $\CG_x$ is $G$ and $\CG_x(\CO_{ F})$ is the stabilizer subgroup of $x$ in $G(F)$. By definition, the neutral component $\CG_x^\circ$ is the \dfn{parahoric group scheme} associated to $x$.  Recall that a smooth affine group scheme $\CG$ over $\CO_F$ is \dfn{quasi-parahoric} if the neutral component of $\CG$ is a parahoric group scheme and $\CG_x^\circ(\CO_{\breve F})\sset \CG(\CO_{\breve F})\sset \CG_x(\CO_{\breve F})$ for some Bruhat-Tits stabilizer group scheme $\CG_x$. 
\begin{defn}
	A \dfn{local model triple} over $F$ is a triple $(G,\cbra{\mu},\CG)$, where $G$ is a connected reductive group over $F$, $\cbra{\mu}$ is the $G(\ol{F})$-conjugacy class of a \dfn{minuscule} cocharacter $\mu: \BG_{m,\ol{F}}\ra G_{\ol{F}}$, and $\CG$ is a quasi-parahoric group scheme for $G$.
\end{defn}

We will often write $(\CG,\mu)$ (resp. $(G,\mu)$) for $(G,\cbra{\mu},\CG)$ (resp. $(G,\cbra{\mu})$). A morphism of local model triples $(\CG,\mu)\ra (\CG',\mu')$ is a group scheme homomorphism $\CG\ra \CG'$ taking $\tcbra{\mu}$ to $\tcbra{\mu'}$. 

Let $(G,\cbra{\mu},\CG)$ be a local model triple over $F$. Denote by $E$ the reflex field of $\tcbra{\mu}$. As conjectured by Scholze-Weinstein \cite[Conjecture 21.4.1]{scholze2020berkeley} and proven in \cite{anschutz2022p,gleason2022tubular}, we have the following theorem.
\begin{thm} \label{thm-localmodels}
	 There exists a unique projective flat normal $\CO_E$-scheme $\BM^\loc_{\CG,\mu}$, which represents the v-sheaf local model in the sense of Scholze-Weinstein.
\end{thm}
 Note that the special fiber of $\BM^\loc_{\CG,\mu}$ is reduced by \cite{gleason2022tubular} and the generic fiber of $\BM^\loc_{\CG,\mu}$ is isomorphic to the homogeneous space $X_{G,\mu}\coloneqq G/P_\mu$ over $E$, where $P_\mu$ is the parabolic subgroup defined by \begin{flalign*}
	    P_\mu\coloneqq \tcbra{g\in G\ |\ \lim_{t\ra \infty} \mu(t)g\mu(t)\inverse \text{\ exists} }.
\end{flalign*}
We also have $\BM^\loc_{\CG,\mu}=\BM^\loc_{\CG^\circ,\mu}$ by \cite[Proposition 21.4.3]{scholze2020berkeley}. By functoriality,  any morphism $(\CG,\mu)\ra (\CG',\mu')$ of local model triples induces a natural morphism $\BM^\loc_{\CG,\mu}\ra \BM^\loc_{\CG',\mu'}$ of local models. 

\begin{defn}[{\cite[Definition 3.1.2]{kisin2024integralmodelsshimuravarieties}}]
   Let $(G,\cbra{\mu},\CG)$ be a local model triple over $F$.
   \begin{enumerate}
   	\item A pair $(G,\mu)$ is of \dfn{(local) Hodge type} if there is a closed immersion $\rho: G\hookrightarrow \GL(V)$, where $V$ is an $F$-vector space of dimension $h$, such that \begin{enumerate}[(i)]
		\item $\rho$ is a minuscule representation in the sense of \cite[\S 1.2.9]{kisin2018integral}.
		\item $\rho\circ\mu$ is conjugate to the standard minuscule cocharacter $\mu_d$ of $\GL(V_{\ol{F}})$, where $$\mu_d(t)\coloneqq \diag(t^{(d)},1^{(h-d)}),\ t\in \ol{F}.$$
		\item $\rho(G)$ contains the scalars.
	\end{enumerate}
	Such a $\rho$ will be said to give a (local) Hodge embedding $\rho: (G,\mu)\hookrightarrow (\GL(V),\mu_d)$. 
	\item An \dfn{integral Hodge embedding} for $(\CG,\mu)$ is a closed immersion $\rho: \CG\hookrightarrow \GL(\Lambda)$ over $\CO_F$, where $\Lambda$ is a finite free $\CO_F$-module, such that the base change $\rho\otimes_{\CO_F}F$ is a Hodge embedding for $(G,\mu)$.
   \end{enumerate}
\end{defn}

\begin{lemma} \label{lem-closure}
	Let $(G,\cbra{\mu},\CG)$ be a local model triple over $F$. Suppose $\rho: (\CG,\mu)\hookrightarrow (\GL(\Lambda),\mu_d)$ is an integral Hodge embedding. Then $\rho$ induces a closed immersion $$X_{G,\mu}=G/P_\mu\hookrightarrow X_{\GL(V),\mu_d}\otimes_{F}E=\Gr(d,V)_E,$$ where $\Gr(d,V)$ denotes the Grassmannian classifying subspaces of $V$ of rank $d$. Let $\ol{X}_{G,\mu}$ be the (reduced) Zariski closure of $X_{G,\mu}\sset \Gr(d,V)_E$ in $\Gr(d,\Lambda)_{\CO_E}$. 
	
	If $\ol{X}_{G,\mu}$ is normal, then $\ol{X}_{G,\mu}$ is isomorphic to $\BM^\loc_{\CG,\mu}$ and the embedding $\ol{X}_{G,\mu}\hookrightarrow \Gr(d,\Lambda)_{\CO_E}$ is identified with the natural morphism $\BM^\loc_{\CG,\mu}\ra \BM^\loc_{\GL(\Lambda),\mu_d}\otimes_{\CO_F}\CO_E$ induced by $\rho$. 
\end{lemma}
\begin{proof}
	See \cite[Lemma 3.4.1]{kisin2024integralmodelsshimuravarieties}. Note that  by \cite{gleason2022tubular}, the condition in \loccit requiring the special fiber of $\ol{X}_{G,\mu}$ to be reduced is in fact implied by the remaining conditions. 
\end{proof}

%\subsubsection{Very good Hodge embeddings}

\begin{defn}[{\cite[Definition 3.4.4]{kisin2024integralmodelsshimuravarieties}}]
	Let $\rho: (\CG,\mu)\hookrightarrow (\GL(\Lambda),\mu_d)$ be an integral Hodge embedding over $\CO_F$. We say that $\rho$ is a \dfn{good} Hodge embedding, if the morphism $$\BM^\loc_{\CG,\mu}\lra \BM^\loc_{\GL(\Lambda),\mu_d}\otimes_{\CO_F}\CO_E$$
	induced by $\rho$ is a closed immersion. 
\end{defn}
By Lemma \ref{lem-closure}, $\rho$ is good if the Zariski closure of $X_{G,\mu}$ in $\Gr(d,\Lambda)_{\CO_E}$ is normal. 

From now on, we suppose that $F/\BQ_p$ is unramified and $\rho: (\CG,\mu)\hookrightarrow (\GL(\Lambda),\mu_d)$ is a good integral Hodge embedding over $\CO_F$. In particular, we have a closed immersion $\BM^\loc_{\CG,\mu}\hookrightarrow \Gr(d,\Lambda)_{\CO_E}$.  

For any $x\in \BM^\loc_{\CG,\mu}(k)$, where $k=\ol{\BF}_p$, we let $R_G=R_{G,x}$ (resp. $R_E$) denote the completion of $\BM^\loc_{\CG,\mu}$ (resp. $\Gr(d,\Lambda)_{\CO_E}$) at $x$. By our assumptions, $R_E$ is isomorphic to a power series ring over $\CO_EW(k)$ and $R_G$ is a (normal) quotient ring of $R_E$. Then $\BW(R_E)$ and $\BW(R_G)$ are $p$-torsion free rings. Set $$M\coloneqq\Lambda\otimes_{\CO_F} \BW(R_E).$$ Let $\ol{M}_1\sset M/\BI_{R_E}M=\Lambda\otimes_{\CO_F}R_E$ be the direct summand corresponding to the universal $R_E$-valued point of $\Gr(d,\Lambda)$. Set $$M_1\coloneqq \text{\ the preimage of $\ol{M}_1$ in $M$. }$$
Then $(M,M_1)$ is a \Dieudonne pair over $R_E$. By the base change along $R_E\twoheadrightarrow R_G$, we obtain a \Dieudonne pair $(M_{R_G},M_{R_G,1})$ over $R_G$. By Lemma \ref{lem-pairs}, we can associate a free $\BW(R_G)$-module $\wt{M}_{R_G,1}$ with $$\wt{M}_{R_G,1}[1/p]=(\varphi^*M_{R_G})[1/p].$$

\begin{defn} \label{defn-tensors}
	For any ring $A$ and a finite free $A$-module $N$, we denote by $N^\otimes$ the direct sum of all $A$-modules which can be formed from $N$ by using the operations of taking tensor products, duals, symmetric and exterior powers. If $N$ is equipped with a filtration, then $N^\otimes$ is equipped with a filtration accordingly. 
	
	If $(s_\alpha)\sset N^\otimes$ and $G\sset \GL(N)$ is the pointwise stabilizer of $s_\alpha$, we say $G$ is the group scheme \dfn{cut out} by the tensors $s_\alpha$.
\end{defn}

\begin{lemma}[{\cite[Proposition 1.3.2]{kisin2010integral}}] \label{lem-tensors}
	 Suppose that $A$ is a discrete valuation ring of mixed characteristic and $N$ is a finite free $A$-module. If $G \sset \GL(N)$ is a closed $A$-flat subgroup whose generic fiber is reductive, then $G$ is cut by a finite collection of tensors in $N^\otimes$.
\end{lemma}
\begin{remark}
	By an argument of Deligne, the tensors in Lemma \ref{lem-tensors} can be taken in the submodule $\oplus_{m,n\geq 0}N^{\otimes m}\otimes_A(N^\vee)^{\otimes n}$. Here, $N^\vee$ denotes the $A$-dual module $\Hom_A(N,A)$. 
\end{remark}

Let $\rho: \CG\hookrightarrow \GL(\Lambda)$ be a Hodge embedding. Then  $\CG\sset \GL(\Lambda)$ (via $\rho$) is cut out by a set of tensors $(s_\alpha)\sset\Lambda^\otimes$ by Lemma \ref{lem-tensors}. Set \begin{flalign*}
	   \wt{s}_\alpha\coloneqq s_\alpha\otimes 1=\varphi^*(s_\alpha\otimes 1)\in \Lambda^\otimes\otimes_{\CO_F}\BW(R_G)=\varphi^*M_{R_G}^\otimes.
\end{flalign*}
We may view $(\wt{s}_\alpha)$ as tensors in $(\varphi^*M_{R_G})^\otimes[1/p]=\wt{M}_{R_G,1}^\otimes[1/p]$. By \cite[\S 5.2]{kisin2024integralmodelsshimuravarieties} (and \cite[Corollary 3.2.11]{kisin2018integral}), we have the following proposition. 

\begin{prop}\label{prop-tens}
	Suppose that $F/\BQ_p$ is unramified and $\rho: (\CG,\mu)\hookrightarrow (\GL(\Lambda),\mu_d)$ is a good integral Hodge embedding over $\CO_F$. Then $\wt{s}_\alpha\in \wt{M}_{R_G,1}^\otimes$. 
\end{prop}

Denote by $\wt{s}_{\alpha,0}$ the reduction of $\wt{s}_\alpha$ in $\wt{M}_{0,1}^\otimes$, where $\wt{M}_{0,1}=\wt{M}_{R_G,1}\otimes_{\BW(R_G)}W(k)$. By Lemma \ref{lem-connection}, we have a connection isomorphism \begin{flalign*}
		   c_\CG: \wt{M}_{0,1}\otimes_{W(k)}\BW(R_G/\fa_{R_G})\simto \wt{M}_{R_G,1}\otimes_{\BW(R_G)}\BW(R_G/\fa_{R_G}).
	\end{flalign*}
	
\begin{defn}\label{defn-very}
	Under the assumptions in Proposition \ref{prop-tens}, we say that $\rho$ is \dfn{very good} at $x\in \BM^\loc_{\CG,\mu}(k)$, if $c_\CG(\wt{s}_{\alpha,0}\otimes 1)=\wt{s}_{\alpha}\otimes 1$. In this case, we say that the tensors $(\wt{s}_\alpha)$ are \dfn{horizontal} at $x$.
	
	We say $\rho$ is a very good (integral) Hodge embedding if $\rho$ is very good at every $x\in \BM^\loc_{\CG,\mu}(k)$. 
\end{defn} 

We will use the following lemma in \S \ref{sec-application}. Recall that (see \cite[Definition 4.1.4]{kisin2024integralmodelsshimuravarieties}) for a scheme $X$ over $k$ and $x\in X(k)$, we say that the tangent space $T_xX$ of $X$ at $x$ is spanned by smooth formal curves if the images of the tangent spaces by $k$-morphisms $\Spec k[[t]]\ra X$ with the closed point mapping to $x$ generate the $k$-vector space $T_xX$.

\begin{lemma}[{\cite[Proposition 5.3.10]{kisin2024integralmodelsshimuravarieties}}] \label{lem-verygood}
	Assume $\rho: (\CG,\mu)\hookrightarrow (\GL(\Lambda),\mu_d)$ is a good integral Hodge embedding over $\BZ_p$. Let $x\in \BM^\loc_{\CG,\mu}(k)$ be a closed point. If the tangent space of the special fiber $\BM^\loc_{\CG,\mu}\otimes_{\CO_E}k$ at $x$ is spanned by smooth formal curves, then $\rho$ is very good at $x$. 
\end{lemma}

We refer to \cite[\S 5.3]{kisin2024integralmodelsshimuravarieties} for more properties of very good Hodge embeddings.

\subsection{Deformations with crystalline tensors} \label{subsec-crystensor}
We continue to use the notation in \S \ref{subsec-versaldef}, and as in \cite[\S 3.2, 3.3]{kisin2018integral}, we may assume $k$ is algebraically closed for simplicity.

Let $\sG_0$ be a $p$-divisible group over $k$. Denote $\BD=\BD(\sG_0)(W)$. Let $(s_{\alpha,0})\sset \BD^\otimes$ be a collection of $\varphi$-invariant tensors whose images in $\BD(\sG_0)(k)^\otimes$ lie in $\Fil^0 \BD(\sG_0)(k)^\otimes $.
In this subsection, we assume the following conditions: \begin{enumerate}
	\item [(A1)] there is an isomorphism $\Lambda\otimes_{\BZ_p}W\simeq \BD$ for some free $\BZ_p$-module $\Lambda$ such that $s_{\alpha,0}\in \Lambda^\otimes$;
	\item [(A2)] the stabilizer group scheme $\CG\sset\GL(\Lambda)$ cut out by $(s_{\alpha,0})\sset\Lambda^\otimes$ has reductive generic fiber $G$ and $\CG^\circ$ is a parahoric group scheme over $\BZ_p$.
\end{enumerate}  

Note that the base change $\CG_W\coloneqq \CG\otimes_{\BZ_p}W\sset \GL(\BD)$ is cut out by $(s_{\alpha,0})\sset\BD^\otimes$. 
In \eqref{eq-Mloc} of \S \ref{subsec-versaldef}, we have defined $M^\loc$ 	and $\wh{M}^\loc=\Spf R$. Let $K'/K_0$ be a finite extension and $y: R\ra K'$ be a map such that $s_{\alpha,0}\in \Fil^0(\BD\otimes_W K')^\otimes$ for the filtration induced by $y$ on $\BD\otimes_W K'$. By \cite[Lemma 1.4.5]{kisin2010integral}, the filtration is induced by a $G$-valued cocoharacter $\mu_y$. We further impose the following assumption:
\begin{enumerate}
	\item [(A3)] there is a very good Hodge embedding $(\CG,\mu_y\inverse)\hookrightarrow (\GL(\Lambda),\mu_d)$ for $d=\dim_k\Lie{\sG_0}$.
\end{enumerate}

Denote by $E\sset K'$ the local reflex field of the $G$-conjugacy class of cocharacters $\{\mu_y\}$. Write $M_{G,y}^\loc$ for the closure of the $G$-orbit $G.y\sset M^\loc\otimes_WE$ in $M^\loc\otimes_W\CO_E$. By assumption (A3) and Lemma \ref{lem-closure}, the scheme $M_{G,y}^\loc$ is isomorphic to the local model $\BM^\loc_{\CG,\mu_y\inverse}$ attached to the local model triple $(G,\tcbra{\mu_y\inverse},\CG)$, and hence $M^\loc_{G,y}$ is normal and only depends on the $G$-conjugacy class $\cbra{\mu_y}$ (not on $y$). 
We denote by $\wh{M}_{G,y}^\loc=\Spf R_G$ the completion of $M_{G,y}^\loc$ along (the image of) the identity in $\GL(\BD\otimes_W k)$. Then $R_G$ is a normal quotient ring of $R\otimes_W\CO_E$. 

Recall in \S \ref{subsec-versaldef}, we constructed a versal deformation $\sG_R$ over $R$ corresponding to a \Dieudonne display $(M,M_1,\Psi)$, where $\Psi$ is constant modulo $\fa_R$. Set $$M_{R_E}\coloneqq  M\otimes_{\BW(R)}\BW(R_E),\quad M_{R_G}\coloneqq M_{R_E}\otimes_{\BW(R_E)}\BW(R_G).$$ The tensors $s_{\alpha,0}\in \BD^\otimes$ induce tensors in  $M_{R_G}^\otimes$, still denoted as $s_{\alpha,0}$. Notice that $\wt{M}_{R_G,1}\sset\varphi^*M_{R_G}$ and $(s_{\alpha,0})$ are $\varphi$-invariant. By \cite[Corollary 3.2.11]{kisin2018integral}, we have $(s_{\alpha,0})\sset \wt{M}_{R_G,1}$. (Here we uses \cite[Proposition 10.3]{anschutz2022extending} to remove the condition (3.2.3) in \cite{kisin2018integral}.)  Recall that the $p$-divisible group $\sG_0$ over $k$ corresponds to a \Dieudonne display $(\BD,\BD_1,\Psi_0: \wt{\BD}_1\simtos \BD)$. Since $\wt{\BD}_1=\varphi^*(\BD)$ and $(s_{\alpha,0})$ are $\varphi$-invariant, we have $(s_{\alpha,0})\sset \wt{\BD}_1^\otimes$. Set $$\fa_{R_E}\coloneqq \fm_{R_E}^2+\pi_ER_E,$$ where $\pi_E\in \CO_E$ is a uniformizer. In particular $R_E/\fa_{R_E}\simeq R/\fa_R$. Set $$\fa_{R_G}\coloneqq \fm_{R_G}^2+\pi_ER_G,$$  

%Denote by $$c_\CG: \wt{\BD}_1\otimes_W\BW(R_G/\fa_{R_G})\simto \wt{M}_{R_G,1}\otimes_{\BW(R_G)}\BW(R_G/\fa_{R_G}), $$ the base change of the connection isomorphism  \begin{flalign*}
%	    c: \wt{\BD}_1\otimes_W\BW(R/\fa_{R})\simto \wt{M}_1\otimes_{\BW(R)}\BW(R/\fa_R)=\wt{M}_{R_E,1}\otimes_{\BW(R_E)}\BW(R_E/\fa_{R_E}).
%\end{flalign*} constructed in Lemma \ref{lem-connection}. We assume that: 
%
%\begin{enumerate}
%	\item [(A5)] The isomorphism $c_{\CG}$ respects tensors $s_{\alpha,0}$ on both sides. 
%\end{enumerate}
%As in \cite[\S 5.2]{kisin2024integralmodelsshimuravarieties}, we say that $(s_{\alpha,0})$ are ``horizontal" if the assumption (A5) holds. 

\begin{prop}
	[{cf. \cite[\S 3.2.12]{kisin2018integral}}]\label{prop-torsor} Assume (A1) to (A3).  \begin{enumerate}
		\item The scheme $$\CT\coloneqq \ud{\Isom}_{(s_{\alpha,0})}(\wt{M}_{R_G,1}, M_{R_G} )$$ consisting of isomorphisms respecting tensors $s_{\alpha,0}$ is a trivial $\CG$-torsor over $\BW(R_G)$.
		\item There exists an isomorphism $\Psi_{R_G}: \wt{M}_{R_G,1}\simto M_{R_G}$ respecting $s_{\alpha,0}$ which lifts to an isomorphism $\Psi_{R_E}: \wt{M}_{R_E,1}\ra M_{R_E}$ that is constant modulo $\fa_{R_E}$. Moreover, the $p$-divisible group $\sG_{R_E}$ over $R_E$ corresponding to the \Dieudonne display $(M_{R_E},M_{R_E,1},\Psi_{R_E})$ is a versal deformation of $\sG_0$. 
	\end{enumerate} 
\end{prop}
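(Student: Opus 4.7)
The plan is to adapt the arguments of \cite[Corollary 3.2.11 and \S 3.2.12]{kisin2018integral} to $p=2$, replacing $\BD(\sG)(\wh{W}(R))$ throughout by Lau's $\Theta_R(\sG)$ from Theorem \ref{thm-classify}, and using Proposition \ref{prop-integraltensor} together with Corollary \ref{coro-integral} as the source of the tensor data on $M_{R_G}^\otimes$.

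For statement (1), I would begin with the inclusion $s_{\alpha,0}\in \wt{M}_{R_G,1}^\otimes$. By the definition of $M_G^\loc$, the filtration $\ol{M}_{R_G,1}/\BI_{R_G}\ol{M}_{R_G}\hookrightarrow \BD\otimes_W R_G$ is induced fibrewise by a cocharacter in the $G$-conjugacy class $\cbra{\mu_y}$; since $\CG$ fixes $s_{\alpha,0}$, these tensors already lie in $\Fil^0(\BD\otimes_W R_G)^\otimes$. Using a normal decomposition and the explicit description $\wt{M}_{R_G,1}=\varphi^*L\oplus pu_0\varphi^*T$ from Lemma \ref{lem-Psi}, together with $\varphi$-invariance of $s_{\alpha,0}\in \BD^\otimes$, one then deduces that the natural lifts of $s_{\alpha,0}$ in $(\varphi^*M_{R_G})^\otimes$ lie in $\wt{M}_{R_G,1}^\otimes$; normality of $R_G$ is what makes these lifts well defined. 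The scheme $\CT$ is then a $\CG$-torsor on $\BW(R_G)$, since its base change along $\BW(R_G)\to W$ is trivialised by Proposition \ref{prop-integraltensor}. For triviality on $\BW(R_G)$ itself, I would use that $\BW(R_G)$ is a complete local ring with residue field $k$ (its maximal ideal being $\BV\BW(R_G)+\BW(\fm_{R_G})$), together with smoothness of $\CG$ over $\BZ_p$, to run a Hensel-type lifting.

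For statement (2), my plan is to start with the canonical constant trivialisation $\Psi^{0}_{R_E}:\wt{M}_{R_E,1}\simto M_{R_E}$ obtained by extending scalars of $\Psi_0$ along $W\to \BW(R_E)$; this is constant modulo $\fa_{R_E}$ tautologically. Any tensor-preserving $\Psi_{R_G}$ produced in part (1) differs from the reduction of $\Psi^{0}_{R_E}$ by an element $g\in \CG(\BW(R_G))$. Invoking smoothness of $\CG$ and the identification $R_E/\fa_{R_E}=R/\fa_R$, I would lift $g$ to an element $\tilde g\in \CG(\BW(R_E))$ whose reduction modulo $\fa_{R_E}$ is the identity, and set $\Psi_{R_E}:=\tilde g\cdot \Psi^{0}_{R_E}$; this lifts $\Psi_{R_G}$, preserves the tensors, and remains constant modulo $\fa_{R_E}$. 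Versality of the resulting $\sG_{R_E}$ then follows verbatim from the lemma at the end of \S\ref{subsec-versaldef}, since constancy modulo $\fa_{R_E}$ is the only input needed.

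I expect the main obstacle to be the simultaneous smooth-lifting step just described: maintaining both the tensor-preserving condition and constancy modulo $\fa$ while passing from $\BW(R_G)$ up to $\BW(R_E)$. This is precisely where smoothness of $\CG$ enters, and is also the place where one might worry about new phenomena at $p=2$; however, Lau's Zink ring $\BW(-)$ enjoys the same functorial properties required for these reductions as in the odd $p$ case, so the argument of \cite{kisin2018integral} should transport without essential change.
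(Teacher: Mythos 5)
Your overall strategy --- adapt \cite[Corollary 3.2.11, \S 3.2.12]{kisin2018integral}, replacing $\wh W(-)$ and $\BD(\sG)(\wh W(-))$ by Lau's $\BW(-)$ and $\Theta_R(-)$ --- is exactly what the paper does (it cites \loccit directly), but your sketch of part (2) contains a genuine error. The ``canonical constant trivialisation'' $\Psi^0_{R_E}: \wt M_{R_E,1}\simto M_{R_E}$ does not exist: extending $\Psi_0$ by scalars produces a map with source $\wt{\BD}_1\otimes_W\BW(R_E)$, whereas $\wt M_{R_E,1}$ is the image of $\varphi^*(M_{R_E,1})$ for the \emph{universal} filtration $M_{R_E,1}$, and these are genuinely different submodules of $\varphi^*M_{R_E}$; they are identified canonically only after reduction modulo $\fa_{R_E}$ (this is the isomorphism displayed just before the definition of ``constant modulo $\fa_R$'' in \S\ref{subsec-versaldef}). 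If such a globally defined, tensor-preserving, $\fa_{R_E}$-constant $\Psi^0_{R_E}$ existed to start with, part (2) would have nothing to prove. Relatedly, your lifting step does not close as written: asking $\tilde g\in\CG(\BW(R_E))$ to simultaneously lift $g$ and reduce to the identity modulo $\fa_{R_E}$ forces $g$ to already be the identity modulo the image of $\fa_{R_E}$ in $R_G$, which nothing in your setup arranges. The correct sequence, following \cite[\S 3.2.12]{kisin2018integral}, is: (i) take any $\Psi'_{R_G}\in\CT(\BW(R_G))$ from part (1); (ii) compare its reduction with $\Psi_0\otimes 1$ over $\BW(R_G/\fa_{R_G})$ --- both are sections of $\CT$ there via the mod-$\fa$ identification --- and use smoothness of $\CG$ over $\BZ_p$ to twist $\Psi'_{R_G}$ into a $\Psi_{R_G}$ that agrees with the constant trivialisation modulo $\fa_{R_G}$; (iii) only then lift this compatible pair to a not-necessarily-tensor-preserving isomorphism $\Psi_{R_E}$ over $\BW(R_E)$, again by smoothness (now of $\GL$).

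A smaller concern in part (1): the inference ``$\CT$ is a $\CG$-torsor since its base change to $W$ is trivialised'' is circular --- having a single $W$-point does not make an $\ud{\Isom}$-scheme fppf-locally trivial. In \loccit the inclusion $s_{\alpha,0}\in\wt M_{R_G,1}^\otimes$ and the torsor property are first established over a suitable dense open of $\Spec R_G$ and then propagated using normality of $R_G$; this, rather than ``making lifts well defined,'' is what normality buys. The Hensel-type argument comes last, once $\CT$ is already known to be a $\CG$-torsor, and shows that it is in fact trivial.
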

\begin{proof}
	(1) This follows from \cite[Corollary 3.2.11]{kisin2018integral} and \cite[Proposition 10.3]{anschutz2022extending}.
	
	(2) By assumption (A3), the isomorphism $\Psi_{R_G/\fa_{R_G}}$ \begin{flalign*}
		    \wt{M}_{R_G,1}\otimes_{\BW(R_G)}\BW(R_G/\fa_{R_G})\xrightarrow{c\inverse_\CG} \wt{\BD}_1\otimes_W\BW(R_G/\fa_{R_G}) \xrightarrow{\Psi_0\otimes 1}\BD\otimes_W\BW(R_G/\fa_{R_G})=M_{R_G/\fa_{R_G}}
	\end{flalign*}
	preserves the tensors $s_{\alpha,0}$, and hence defines a point in $\CT(\BW(R_G/\fa_{R_G}))$. Since $\CT$ is a $\CG$-torsor, we can lift the point to a point in $\CT(\BW(R_G))$, which corresponds to an isomorphism $\Psi_{R_G}: \wt{M}_{R_G,1}\simto M_{R_G}$ respecting $s_{\alpha,0}$. By construction, $\Psi_{R_G/\fa_{R_G}}$ is the reduction of the isomorphism $\Psi_{R_E/\fa_{R_E}}$ \begin{flalign*}
		    \wt{M}_{R_E,1}\otimes_{\BW(R_E)}\BW(R_E/\fa_{R_E})\xrightarrow{c\inverse} \wt{\BD}_1\otimes_W\BW(R_E/\fa_{R_E}) \xrightarrow{\Psi_0\otimes 1}\BD\otimes_W\BW(R_E/\fa_{R_E})=M_{R_E/\fa_{R_E}} .
	\end{flalign*} 
	Denote by $\CF$ the $\GL(M_{R_E})$-torsor $\ud{\Isom}(\wt{M}_{R_E,1},M_{R_E})$ over $\BW(R_E)$. Then $\Psi_{R_G}$ and $\Psi_{R_E/\fa_{R_E}}$ define a point of $\CF$ valued in $\BW(R_G)\times_{\BW(R_G/\fa_{R_G})}\BW(R_E/\fa_{R_E})=\BW(R_G\times_{R_G/\fa_{R_G}}R_E/\fa_{R_E})$. We can lift this point to an $\BW(R_E)$-valued point of $\CF$, which corresponds to an isomorphism $\Psi_{R_E}: \wt{M}_{R_E,1}\simto M_{R_E}$. Hence, $\Psi_{R_E}$ is constant modulo $\fa_{R_E}$. By Lemma \ref{lem-versal} and the discussion in \cite[\S 3.2.12]{kisin2018integral}, the \Dieudonne display $(M_{R_E},M_{R_E,1},\Psi_{R_E})$ is versal.
\end{proof}

Following \cite[\S 4]{zhou2020mod}, we make the following definition. 

\begin{defn}\label{defn-adapted}
	Let $\sG$ be a $p$-divisible group over $\CO_K$ deforming $\sG_0$. We say that $\mathscr{G}$ is \dfn{$(\CG_W,\mu_y)$-adapted} if the tensors $s_{\alpha,0}$ lift to Frobenius invariant tensors $\wt{s}_{\alpha}\in \Theta_{\CO_K}(\sG)^\otimes$ such that the following two conditions hold: \begin{enumerate}
		\item There is an isomorphism $\Theta_{\CO_K}(\sG)\simeq \BD\otimes_W\BW(\CO_K)$ sending $\wt{s}_\alpha$ to $s_{\alpha,0}\otimes 1$.
		\item Under the canonical isomorphism $\BD(\sG)(\CO_K)\otimes_{\CO_K}K\simeq \BD\otimes_WK$, the filtration on $\BD\otimes_WK$ is induced by a $G$-valued cocharacter $G$-conjugate to $\mu_y$. 
	\end{enumerate}
\end{defn}

\begin{prop}\label{prop-adapted}
	Assume (A1) to (A3). View $\Spf R_E$ as the versal deformation space of $\sG_0$ by the construction in Proposition \ref{prop-torsor} (2). Then for any finite extension $K/E$, a map $\xi: R_E\ra \CO_K$ factors through $R_G$ if and only if the $p$-divisible group $\sG_\xi=\xi^*\sG_{R_E}$ is $(\CG_W,\mu_y)$-adapted.
\end{prop}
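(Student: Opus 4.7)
\medskip

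The plan is to prove the two directions separately, using Proposition \ref{prop-torsor} to handle the forward direction directly, and using the closure definition of $M_G^\loc$ together with the DVR property of $\CO_K$ for the reverse direction.

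For the ``only if" direction, suppose $\xi: R_E \to \CO_K$ factors as $R_E \twoheadrightarrow R_G \xrightarrow{\xi_G} \CO_K$. By Proposition \ref{prop-torsor}, the \Dieudonne display $(M_{R_E}, M_{R_E,1}, \Psi_{R_E})$ corresponding to $\sG_{R_E}$ admits, after base change to $\BW(R_G)$, a trivialization: the torsor $\CT$ is trivial, so the isomorphism $\Psi_{R_G}$ respects the tensors $s_{\alpha,0}$, which therefore lie in $\wt{M}_{R_G,1}^\otimes$ and lift to $\varphi$-invariant tensors in $M_{R_G}^\otimes$. Pulling back along $\xi_G$ gives tensors $\wt{s}_\alpha \in \Theta_{\CO_K}(\sG_\xi)^\otimes$ and an isomorphism $\Theta_{\CO_K}(\sG_\xi) \simeq \BD \otimes_W \BW(\CO_K)$ sending $\wt{s}_\alpha$ to $s_{\alpha,0}\otimes 1$, verifying (1) of Definition \ref{defn-adapted}. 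The filtration condition (2) follows since, by construction, every $K'$-point of $M_G^\loc$ corresponds to a parabolic conjugate to the one induced by $\mu_y$, so the Hodge filtration on $\BD(\sG_\xi)(\CO_K)\otimes_{\CO_K}K \simeq \BD\otimes_WK$ is given by a $G$-valued cocharacter $G(K)$-conjugate to $\mu_y$.

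For the ``if" direction, assume $\sG_\xi$ is $(\CG_W,\mu_y)$-adapted with witnessing tensors $\wt{s}_\alpha$ and trivialization $\Theta_{\CO_K}(\sG_\xi) \simeq \BD \otimes_W \BW(\CO_K)$. Reducing modulo $\BI_{\CO_K}$ yields an isomorphism $\BD(\sG_\xi)(\CO_K) \simeq \BD \otimes_W \CO_K$ taking the image of $\wt{s}_\alpha$ to $s_{\alpha,0} \otimes 1$, and lifting the identity on $\BD \otimes_W k$ (the latter by the compatibility of $\Theta$ with base change to the residue field). Transporting the Hodge filtration across this isomorphism produces a section $\Spec \CO_K \to \GL(\BD)/P$ which lifts the identity over $k$; this is precisely a map $\xi': R \to \CO_K$ whose base change to $\CO_E$ recovers $\xi$ as a map $R_E \to \CO_K$. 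By Definition \ref{defn-adapted}(2), the induced filtration on $\BD \otimes_W K$ is given by a cocharacter $G(K)$-conjugate to $\mu_y$, so the generic point of $\xi'$ lies in the $G$-orbit $G.y \subset (\GL(\BD)/P)_{K}$. Since $\CO_K$ is a DVR and $M_G^\loc$ is the scheme-theoretic closure of $G.y$ inside $(\GL(\BD)/P)_{\CO_E}$, the valuative criterion forces the entire section $\Spec\CO_K \to (\GL(\BD)/P)_{\CO_E}$ to factor through $M_G^\loc$. Passing to completions at the identity in the special fiber shows that $\xi$ factors through $R_G$, as desired.

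The main subtlety will be the consistency between the integral trivialization supplied by the adapted condition and the formal-deformation-theoretic construction of $\sG_{R_E}$ from Proposition \ref{prop-torsor}(2): we need to know that the map $\xi' : R \to \CO_K$ produced from the Hodge filtration of $\sG_\xi$ agrees with $\xi$ as points of the deformation space, rather than giving some other deformation of $\sG_0$ with the same Hodge filtration. This is handled by the versality of $\sG_{R_E}$ over $R_E$ combined with the fact that, modulo $\fa_{R_E}$, the \Dieudonne display of $\sG_\xi$ is determined by the quadruple $(M,\wh{M}_1, F, \wh{F}_1)$ of Theorem \ref{thm-34}, so only the Hodge filtration degree of freedom remains; this is essentially the argument proving versality in \S\ref{subsec-versaldef}. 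A parallel argument appears in \cite[Proposition 3.2.17]{kisin2018integral}, and the tensor-compatible version of Theorem \ref{thm-classify} together with Proposition \ref{prop-integraltensor} allows the same reasoning to run unchanged when $p = 2$.
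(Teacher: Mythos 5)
Your forward direction is essentially correct and close in spirit to the paper (which cites \cite[Proposition~4.7]{zhou2020mod} rather than spelling out the pullback of the torsor trivialization from $\BW(R_G)$).

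For the reverse direction there is a genuine gap, and it is exactly at the step you flagged as ``the main subtlety'' but did not resolve. You construct $\xi'\colon R \to \CO_K$ by transporting the Hodge filtration of $\sG_\xi$ across the isomorphism $\Theta_{\CO_K}(\sG_\xi)\simeq \BD\otimes_W \BW(\CO_K)$ furnished by Definition~\ref{defn-adapted}(1), and then assert that $\xi'$ base-changed to $\CO_E$ ``recovers $\xi$.'' This is not automatic. The adapted condition only asserts that \emph{some} tensor-respecting trivialization exists; the canonical trivialization $\Theta_{\CO_K}(\sG_\xi)=\xi^*\Theta_{R_E}(\sG_{R_E})\simeq\BD\otimes_W\BW(\CO_K)$ coming from the construction of the versal family need not respect the tensors $s_{\alpha,0}$, and the two trivializations differ by an automorphism $g$ of $\BD\otimes_W\BW(\CO_K)$ that a priori does \emph{not} lie in $\CG(\BW(\CO_K))$. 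Consequently the transported filtration, which is what $\xi'$ sees, and the filtration cut out by $\xi$ itself, which is what you want to show lies in $M_G^{\loc}$, are related by $g$ and can differ. Your final paragraph, invoking Theorem~\ref{thm-34} and the reduction to the quadruple modulo $\fa_{R_E}$, does not remove this ambiguity: that reduction isolates the Hodge-filtration degree of freedom for a \emph{fixed} trivialization, but the whole difficulty is the freedom in choosing the trivialization.

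The paper closes this gap with a genuinely different device. After replacing $y$ by $y'$ so that both the display induced by $y$ and the display of $\sG_\xi$ (identified via the adapted trivialization) share the same filtration $M_{\CO_K,1}$, one interpolates the \emph{Frobenius structure only} over $S=\CO_K[[T]]$: using the surjection $\BW(S)\twoheadrightarrow \BW(\CO_K)\times_W\BW(\CO_K)$ and smoothness of the $\CG$-torsor $\CT$, one produces a tensor-preserving $\Psi_S$ specializing to $(\Psi,\Psi')$ under $T\mapsto(0,\pi)$, while the filtration $M_{S,1}$ is the base change of $y$'s filtration and hence stays in $R_G$. Versality then realizes the resulting display over $S$ as $\wt\xi^*\sG_{R_E}$ for some $\wt\xi\colon R_E\to S$ lifting $(y,\xi)$; since the filtration of $\wt\xi$ is constant in $T$ and equal to $y$'s, $\wt\xi$ factors through $R_G$, and so does its specialization $\xi$ at $T=\pi$. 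Note also that identifying the $T=\pi$ specialization of $\wt\xi$ with $\xi$ itself quietly uses that $R_E$ is a universal (not merely versal) deformation ring, so that two maps $R_E\to\CO_K$ inducing isomorphic deformations of $\sG_0$ coincide; your direct argument would need the same input even if the $\xi'=\xi$ claim were repaired. Without the interpolation over $\CO_K[[T]]$ or an equivalent mechanism, the proof of the ``if'' direction is incomplete.
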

\begin{proof}
	($\Rightarrow$) See \cite[Proposition 4.7]{zhou2020mod} and \cite[Proposition 3.2.7]{kisin2024independenceellfrobeniusconjugacy}.
	
	($\Leftarrow$) The proof goes as in \cite[Proposition 3.2.17]{kisin2018integral}. For completeness, we recall the arguments here. Suppose $\sG_\xi$ is $(\CG_W,\mu_y)$-adapted. Denote by $s_\alpha\in\BD(\sG)(\CO_K)^\otimes$ the image of $\wt{s}_\alpha$ modulo $\BI_{\CO_K}$. Then the isomorphism in (1) of Definition \ref{defn-adapted} gives an isomorphism $\BD_{\CO_K}\coloneqq \BD\otimes_W\CO_K\simtos \BD(\sG)(\CO_K)$ taking $s_{\alpha,0}$ to $s_\alpha$. Hence, by (2) in Definition \ref{defn-adapted}, this isomorphism induces a filtration on $\BD_{\CO_K}$ corresponding to a map $y': R_G\ra \CO_K$ and $s_{\alpha,0}\in \Fil^0\BD_{\CO_K}^\otimes$. As $R_G$ depends only on the reduction of $y$ and the conjugacy class of $\mu_y$, we may assume $y=y'$ (and $K'=K$). 
	
	The map $y: R_G\ra \CO_K$ induces a \Dieudonne display $(M_{\CO_K}, M_{\CO_K,1},\Psi)$, and by the construction of $\Psi_{R_G}$, the isomorphism $\Psi: \wt{M}_{\CO_K,1}\simtos M_{\CO_K}$ takes $s_{\alpha,0}$ to $s_{\alpha,0}$. Since $y=y'$, the $p$-divisible group $\sG_{\xi}$ corresponds to a \Dieudonne display $(M_{\CO_K},M_{\CO_K,1},\Psi')$. As $\wt{s}_{\alpha}$ is Frobenius invariant and $\Psi'$ differs from the Frobenius a scalar (contained in $G$ by assumption), then $\Psi'$ takes $s_{\alpha,0}$ to $s_{\alpha,0}$, and reduces to $\Psi_0: \wt{\BD}_1\simtos \BD$. 
	
	Now we construct a \Dieudonne display over $S\coloneqq \CO_K[[T]]$. First consider the \Dieudonne display $(M_S,M_{S,1},\Psi)$, the base change of $(M_{\CO_K}, M_{\CO_K,1}, \Psi)$ to $S$. The map $S\ra \CO_K\times_k\CO_K$ given by $T\mapsto (0,\pi)$ is surjective, and hence so is $\BW(S)\ra \BW(\CO_K)\times_W\BW(\CO_K)$. Note that by Proposition \ref{prop-torsor}, $\CT$ is a (trivial) $\CG$-torsor. Since $\CG$ is smooth, we have a surjection $$\CT(\BW(S))\twoheadrightarrow \CT(\BW(\CO_K)\times_W\BW(\CO_K)).$$ That is, there exists an isomorphism $\Psi_S: \wt{M}_{S,1}\simto M_S$ which takes $s_{\alpha,0}$ to $s_{\alpha,0}$, and specializes to $(\Psi,\Psi')$ under $T\mapsto (0,\pi)$. We take $M_S$ to be the \Dieudonne display associated to $(M_S,M_{S,1},\Psi_S)$. 
	
	By versality, we may lift the map $(y,\xi): R_E\ra \CO_K\times_k\CO_K$ to a map $\wt{\xi}: R_E\ra S$ which induces the \Dieudonne display $M_S$ and $M_S$ is the base change of $M_{R_E}$ by $\wt{\xi}$. Now the rest of the proof is similar as in \cite[Proposition 3.2.17]{kisin2018integral}. Then we conclude that $\wt{\xi}$ factors though $R_G$, and hence $\xi$ does as well. 
\end{proof}

In \S \ref{sec-intmodelhodge}, we will construct $(\CG_W,\mu_y)$-adapted deformations of $p$-divisible groups associated to closed points in integral models of Shimura varieties, and apply Proposition \ref{prop-adapted} to describe the local structure of integral models of Shimura varieties.

\section{Integral models of Shimura varieties of abelian type} \label{sec-intmodelhodge}
In this section, we will prove Theorem \ref{thm1}. Following the strategy of \cite{kisin2018integral,kisin2024integralmodelsshimuravarieties}, we first consider Shimura varieties $\Sh_\rK(\bG,X)$ of Hodge type. We construct their integral models $\sS_{\rK}(\bG,X)$ by using the Hodge embeddings into Siegel modular varieties, as in \textit{loc. cit.}. Under certain assumptions (see Theorem \ref{thm-hodgemodel}), we apply the deformation theory developed in \S \ref{sec-defm} to identify the formal neighborhood of $\sS_\rK(\bG,X)$ with that of the local model. Then we extend this construction of integral models to the case of Shimura varieties of abelian type by choosing suitable Hodge type lifts under certain conditions (see Theorem \ref{thm-main}). We complete the proof of Theorem \ref{thm1} by showing that these conditions are satisfied in Case (A) or (B). 

\subsection{Shimura varieties of Hodge type} \label{subsubsec-hodge}
Let $(\bG,X)$ be a Shimura datum, that is, $\bG$ is a reductive group over $\BQ$ and $X$ is a $\bG(\BR)$-conjugacy class of $$h: \BS\coloneqq \Res_{\BC/\BR}\BG_m\ra \bG_\BR$$ satisfying axioms 2.1.1.1-2.1.1.3 in \cite[\S 2.1]{deligne1979varietes}. Denote by $\mu_h: \BG_{m\BC}\ra \bG_{\BC}$ the associated Hodge cocharacter, defined  by $\mu_h(z)=h_{\BC}(z,1)$. Set $w_h\coloneqq \mu_h\inverse\mu_h^{c-1}$ (the weight homomorphism), where $c$ denotes the complex conjugation.

Fix a $\BQ$-vector space $V$ of dimension $2g$ with a perfect alternating pairing $\psi: V\times V\ra \BQ$. Let $\bGSp=\bGSp(V,\psi)$ be the corresponding symplectic similitude group over $\BQ$, and let $S^\pm=S^\pm(V,\psi)$ be the Siegel double space consisting of maps $h:\BS\ra \bGSp_\BR$ such that \begin{enumerate}
	\item The map $\BS\xrightarrow{h}\bGSp_\BR\hookrightarrow\GL(V_\BR)$ gives rise to a Hodge structure of type $(-1,0), (0,-1)$ on $V_\BR$, i.e., $V_\BC=V^{-1,0}\oplus V^{0,-1}$.
	\item The pairing $(x,y)\mapsto \psi(x,h(i)y)$ is (positive or negative) definite on $V_\BR$.
\end{enumerate}
Then $(\bGSp,S^\pm)$ is a Shimura datum, which is called a \dfn{Siegel Shimura datum}. 

For the rest of the subsection, we assume $(\bG,X)$ is of Hodge type, i.e., there exists an embedding of Shimura data $$\iota: (\bG,X)\hookrightarrow (\bGSp(V,\psi),S^\pm).$$ Sometimes we will write $G$ for $\bG_{\BQ_p}$ for simplicity. 

Let $\bE=\bE(\bG,X)$ be the reflex field with ring of integers $\CO_{\bE}$. Let $p$ be a prime number. Let $\BA_f$ denote the ring of finite \adeles over $\BQ$, and $\BA^p_f $ denote the ring of prime-to-$p$ finite \adeles, which we consider as the subgroup of $\BA_f$ with trivial component at $p$.  Fix a place $v|p$ of $\bE$, and let $E$ denote the completion of $\bE$ at $v$. Denote by $\CO_{\bE,(v)}$ (resp. $\CO_E$) the localization (resp. completion) of $\CO_{\bE}$ at $v$. We write $G$ for the base change $\bG_{\BQ_p}$. Let $\CG$ be the Bruhat-Tits group scheme over $\BZ_p$ associated with some $x\in \CB(G,\BQ_p)$, whose neutral component $\CG^\circ$ is parahoric. Set $\rK_p=\CG(\BZ_p)$ or $\CG^\circ(\BZ_p)$ and $\rK=\rK_p\rK^p$ with $\rK^p\sset \bG(\BA^p_f)$ sufficiently small open compact subgroup.
By general theory of Shimura varieties, these data yield a quasi-projective smooth algebraic variety $\Sh_\rK(\bG,X)$ canonically defined over $\bE$, whose $\BC$-points are given by $$\Sh_\rK(\bG,X)(\BC)= \bG(\BQ)\backslash X\times \bG(\BA_f)/\rK.$$

We can also consider the projective limit of $\bE$-schemes \begin{flalign*}
	   \Sh(\bG,X) =\varprojlim_{\rK}\Sh_\bK(\bG,X), \ 
	   \text{resp.}\ \Sh_{\rK_p}(\bG,X)=\varprojlim_{\rK^p}\Sh_{\rK_p\rK^p}(\bG,X),
\end{flalign*} which carries a natural action of $\bG(\BA_f)$ (resp. $\bG(\BA_f^p)$). The projective limit exists since the transition maps are finite, hence affine.

\subsubsection{Integral models for level $\CG(\BZ_p)$: construction} \label{subsubsec-intconstruction}
Assume that \begin{enumerate}[(i)]
		\item $\rK_p=\CG(\BZ_p)$;
		\item $\iota_{\BQ_p}$ extends to a very good integral Hodge embedding $\wt{\iota}: (\CG,\mu_h)\hookrightarrow (\GL(V_{\BZ_p}),\mu_g)$, where $V_{\BZ_p}\sset V_{\BQ_p}$ is a \dfn{self-dual} $\BZ_p$-lattice with respect to $\psi$.
	\end{enumerate} 
% $\rK_p=\CG(\BZ_p)$ and $\iota_{\BQ_p}$ extends to a very good integral Hodge embedding $\wt{\iota}: (\CG,\mu_h)\hookrightarrow (\GL(V_{\BZ_p}),\mu_g)$, where $V_{\BZ_p} \sset V_{\BQ_p}$ is a $\BZ_p$-lattice of rank $2g$ and $V_{\BZ_p}$ is \dfn{self-dual} with respect to $\psi$. 
 We let $\mathcal{GSP}$ denote the parahoric group scheme associated to the self-dual lattice $V_{\BZ_p}$. 
Set $V_{\BZ_{(p)}}\coloneqq V\cap V_{\BZ_p}$. Denote by $G_{\BZ_{(p)}}$ the Zariski closure of $G$ in $\GL(V_{\BZ_{(p)}})$, then $\CG\simeq G_{\BZ_{(p)}}\otimes_{\BZ_{(p)}}\BZ_p$. Set $\rK'_p\coloneqq \mathcal{GSP}(\BZ_p)$. Let $\rK'^p$ be a small enough open compact subgroup of $\bGSp(\BA^p_f)$ containing $\rK^p$, which leaves $V_{\wh{\BZ}^p}$ stable. Here $\wh{\BZ}^p\coloneqq \prod_{\ell\neq p}\BZ_\ell$.  Set $\rK'=\rK'_p\rK'^p$. Then the embedding $\iota$ induces a closed immersion over $\bE$ $$\Sh_\rK(\bG,X)\hookrightarrow \Sh_{\rK'}(\bGSp,S^\pm)\otimes_\BQ\bE.$$ The choice of $V_{\BZ_{(p)}}$ gives rise to an interpretation of $\Sh_{\rK'}(\bGSp,S^\pm)$ as a moduli space of polarized abelian varieties, and hence to a natural integral model $\mathscr{S}_{\rK'}(\bGSp,S^\pm)$ over $\BZ_{(p)}$ (cf. \cite[\S 6.3]{zhou2020mod}).

\begin{defn}\label{defn-model}
	The integral model $\mathscr{S}_\rK(\bG,X)$ of $\Sh_\rK(\bG,X)$ is the normalization of the (reduced) Zariski closure $\mathscr{S}^-_\rK(\bG,X)$ of $\Sh_\rK(\bG,X)$ in $\mathscr{S}_{K'}(\bGSp,S^\pm)_{\CO_{\bE,{(v)}}}$. We set $$\sS_{\rK_p}(\bG,X)\coloneqq \varprojlim_{\rK^p}\sS_{\rK_p\rK^p}(\bG,X).$$ The $\bG(\BA_f^p)$-action on $\Sh_{\rK_p}(\bG,X)$ extends to $\sS_{\rK_p}(\bG,X)$. 
\end{defn}

\subsubsection{Hodge tensors and deformation theory}\label{subsubsec-hodgetensors}
Since $G_{\BZ_{(p)}}$ has reductive generic fiber, we can find a finite collection of tensors $(s_{\alpha})\sset V_{\BZ_{(p)}}^\otimes=(V_{\BZ_{(p)}}^\vee)^\otimes$ whose scheme-theoretic stabilizer is $G_{\BZ_{(p)}}$. Let $h: \CA\ra \mathscr S_\rK(\bG,X)$ denote the pullback of the universal abelian scheme over $\mathscr S_{\rK'}(\bGSp,S^\pm)$. Denote by $\CV=R^1h_*\Omega^\bullet$ the (relative) algebraic de Rham cohomology of $\CA$. Then the tensors $(s_\alpha)$, by the de Rham isomorphism, give rise to a collection of (absolute) Hodge cycles $s_{\alpha,\dR}\in\CV^\otimes_\BC$, where $\CV_\BC$ is the complex analytic vector bundle attached to $\CV$, and $s_{\alpha,\dR}$ descends to $\CV^\otimes$ by \cite[Proposition 4.2.6]{kisin2018integral} (i.e., $s_{\alpha,\dR}$ can be defined over $\CO_{\bE,(v)}$).

Recall that $\breve E$ denotes the completion of the maximal unramified extension of $E$ in $\ol{\BQ}_p$ with residue field $k$. Let $L/\breve E$ be a finite extension. For a point $x\in \Sh_\rK(\bG,X)(L)$ specializing to $\ol{x}\in\sS_\rK^-(\bG,X)(k)$, we write $\CA_x$ for the pullback of $\CA$ to $x$ and write $\sG_x$ for the $p$-divisible group associated with $\CA_x$. Then $s_{\alpha,\dR}$ pullbacks to $s_{\alpha,\dR,x}\in H^1_\dR(\CA_x)^\otimes$. We can also obtain corresponding tensors $s_{\alpha,\et,x}$ in $T_p\sG_x^{\vee\otimes}$ by the Betti-\etale comparison theorem. Here $T_p\sG_x^\vee\coloneqq \Hom_{\BZ_p}(T_p\sG_x,\BZ_p)$. The tensors $s_{\alpha,\et,x}$ are Galois invariant and their scheme-theoretic stabilizer is isomorphic to $\CG$. Write $\sG_{\ol{x}}$ for the $p$-divisible group corresponding to $\ol{x}$ and $\BD_{\ol{x}}$ for $\BD(\sG_{\ol{x}})(W)$. Set $V\coloneqq T_p\sG_x^{\vee}\otimes_{\BZ_p}\BQ_p$. Then $V$ is a crystalline representation of $\Gamma_L\coloneqq \Gal(\ol{L}/L)$. The $p$-adic comparison isomorphism $$B_\cris\otimes_{\BZ_p} T_p\sG_x^{\vee}\simeq B_\cris\otimes_{K_0}D_\cris(V), \quad D_\cris(V)\coloneqq (B_\cris\otimes_{\BQ_p}V)^{\Gamma_L}, $$ takes the Galois invariant tensors $s_{\alpha,\et,x}$ to the $\varphi$-invariant tensors $s_{\alpha,0}\in D_\cris(V)^\otimes$.

\begin{prop}\label{prop-integraltensor}
	We have $s_{\alpha,0}\in \BD_{\ol{x}}^\otimes$, where we view $\BD_{\ol{x}}^\otimes$ as a $W$-submodule of the $K_0$-vector space $D_\cris(V)^\otimes$. Moreover, we have the following properties. \begin{enumerate}
		\item The tensors $s_{\alpha,0}$ lift to $\varphi$-invariant tensors $\wt{s}_{\alpha,x}\in \Theta_{\CO_L}(\sG_x)^\otimes$, which map into $\Fil^0\BD(\sG_x)(\CO_L)^\otimes$ along the natural projection $\Theta_{\CO_L}(\sG_x)\ra \BD(\sG_x)(\CO_L)$ given by Theorem \ref{thm-classify} (2). Denote by $s_{\alpha,x}$ the image of $\wt{s}_{\alpha,x}$.
		\item There exists an isomorphism $\Theta_{\CO_L}(\sG_x)\simeq \BW(\CO_L)\otimes_{\BZ_p}T_p\sG_x^\vee$ taking $\wt{s}_{\alpha,x}$ to $s_{\alpha,\et,x}$. 
		In particular, there exists an isomorphism $$\BD_{\ol{x}}\simeq W\otimes_{\BZ_p}T_p\sG_x^\vee$$ taking $s_{\alpha,0}$ to $s_{\alpha,\et,x}$, and an isomorphism $$\BD(\sG_x)(\CO_L)\simeq \BD(\sG_{\ol{x}})(W) \otimes_W\CO_L $$ taking $s_{\alpha,x}$ to $s_{\alpha,0}$. Therefore, we can identify the group scheme $\CG_W\sset \GL(\BD_{\ol{x}})$ defined by $s_{\alpha,0}$ with $\CG\otimes_{\BZ_p}W$, and there exists a $G_{K_0}$($=\CG_W\otimes_WK_0$)-valued cocharacter $\mu_y$ such that \begin{enumerate}
		\item The filtration on $\BD_{\ol{x}}\otimes_WL$ induced by the canonical isomorphism $$\BD_{\ol{x}}\otimes_WL\simeq \BD(\sG_x)(\CO_L)\otimes_{\CO_L}L$$ is given by a $G_{K_0}$-valued cocharacter $G_{K_0}$-conjugate to $\mu_y$.
		\item $\mu_y$ induces a filtration on $\BD_{\ol{x}}$ which lifts the Hodge filtration on $\BD_{\ol{x}}\otimes_Wk=\BD(\sG_{\ol{x}})(k)$. 
	    \end{enumerate}
	\end{enumerate}  
\end{prop}
\begin{proof}
	As in \cite[Proposition 3.3.8]{kisin2018integral}, the tensors $(s_{\alpha,\et,x})\sset T_p\sG_x^{\vee\otimes}$ give rise to $\varphi$-invariant tensors $s_{\alpha,x}^{\fM}\sset \fM(\sG_x)^\otimes$. The tensors $s_{\alpha,x}^\fM$ map to tensors $\wt{s}_{\alpha,x}$ in $\Theta_{\CO_L}(\sG_x)^\otimes$ via the isomorphism \begin{flalign*}
	   \Theta_{\CO_K}(\sG_x)\simeq \varphi^* \fM(\sG_x) \otimes_{\fS,\kappa} \BW(\CO_K)
\end{flalign*} in Corollary \ref{coro-Mequi} (1). Since the above isomorphism respects the Hodge filtrations by Corollary \ref{coro-Mequi} (2), the tensors $\wt{s}_{\alpha,x}$ map into $\Fil^0\BD(\sG_x)(\CO_L)^\otimes$. The rest of the proof proceeds as in \cite[Proposition 3.3.8, Corollary 3.3.10]{kisin2018integral}.
\end{proof}

The above proposition implies that $\sG_x$ is a $(\CG_W,\mu_y)$-adapted deformation of $\sG_{\ol{x}}$ in the sense of Definition \ref{defn-adapted}.

\subsubsection{Integral models for level $\CG(\BZ_p)$: properties}
Fix a parabolic subgroup $P\sset \GL(\BD_{\ol{x}})$ lifting $P_0$ corresponding to the Hodge filtration of $\BD(\sG_{\ol{x}})(k)= \BD_{\ol{x}}\otimes_Wk$. Let $y=y(x)\in \rbra{\GL(\BD_{\ol{x}})/P)}(L)$ correspond to the cocharacter $\mu_y$ as in Proposition \ref{prop-integraltensor} (2). Then as in \S \ref{subsec-crystensor}, we obtain from $y$ a closed subscheme $M_{G,y}^\loc\sset \GL(\BD_{\ol{x}})/P$ and formal local models $$\wh{M}^\loc=\Spf R,\quad \wh{M}^\loc_{G,y}=\Spf R_G.$$ Note that $R_G$ is a quotient of $R_E=R\otimes_W\CO_E$. By Proposition \ref{prop-integraltensor} (2) and the Betti-\etale comparison theorem, the scheme $\ud{\Isom}_{(s_\alpha,s_{\alpha,0})} (V_{\BZ_p}^\vee,\BD_{\ol{x}})$ of tensor-preserving isomorphisms is a trivial $\CG$-torsor. Then we may choose an isomorphism  $V_{\BZ_p}^\vee\simeq \BD_{\ol{x}}$ preserving tensors such that the very good Hodge embedding (by our assumption on $\wt{\iota}$) $$(\CG,\mu_h)\overset{\wt{\iota}} {\hookrightarrow} (\GL(V_{\BZ_p}),\mu_g)\simeq(\GL(V_{\BZ_p}^\vee),\mu_g) \simeq (\GL(\BD_{\ol{x}}),\mu_g)$$ induces a closed immersion $\BM^\loc_{\CG,\mu_h}\hookrightarrow (\GL(\BD_{\ol{x}})/P)_{\CO_E}\simtos \Gr(g,\BD_{\ol{x}})_{\CO_E}$. Note that the Hodge filtration on $\BD_{\ol{x}}\otimes_WL$ is induced by a $G$-valued cocharacter conjugate to $\mu_h\inverse$. Hence, we can identify ${M}^\loc_{G,y}$ with $\BM^\loc_{\CG,\mu_h}$ by Lemma \ref{lem-closure}, and so $R_G$ is normal.

 \begin{prop} \label{prop-hodgelocal}
 Suppose that conditions (i) and (ii) in the beginning of \S \ref{subsubsec-intconstruction} are satisfied.
 	Let $\wh{U}_{\ol{x}}$ be the completion of $\sS_{\mathrm{K}}^-(\bG,X)_{\CO_{\breve E}}$ at $\ol{x}$. Then the irreducible component of $\wh{U}_{\ol{x}}$ containing $x$ is isomorphic to $\wh{M}_{G,y}^\loc=\Spf R_G$ as formal schemes over $\CO_{\breve E}$.
 \end{prop}
 \begin{proof}
 	We follow the arguments of \cite[Proposition 4.2.2]{kisin2018integral}. 
 	
 	Note that $G_{K_0}\sset \GL(\BD_{\ol{x}}\otimes_{\BZ_p}\BQ_p)$ contains scalars, since $\bG\sset \GL(V_\BQ)$ contains the image of the weight homomorphism $w_h$. As $\iota_{\BQ_p}$ extend to a very good Hodge embedding, the constructions and results in \S \ref{sec-defm} can apply. In particular, by Proposition \ref{prop-torsor} (2), we can view $\Spf R_E$ as a versal deformation space of $\sG_{\ol{x}}$. Then the $p$-divisible group over $\wh{U}_{\ol{x}}$ arising from the universal abelian scheme $\CA$ gives rise to a natural map $\Phi: \wh{U}_{\ol{x}}\ra \Spf R_E$, which is a closed embedding by Serre-Tate theorem. 
 	
 	Let $Z\sset \wh{U}_{\ol{x}}$ be the irreducible component containing $x$. Let $x'\in Z(L')$ for some finite field extension $L'$ of $\breve E$. Then we can argue as in \cite[Proposition 4.2.2]{kisin2018integral} to show: $s_{\alpha,\et,x'}$ corresponds to $s_{\alpha,0}$ under the $p$-adic comparison isomorphism for the $p$-divisible group $\sG_{x'}$. Since the filtration on $\BD_{\ol{x}}\otimes_WK'$ corresponding to $\sG_{x'}$ is given by a $G$-valued cocharacter which is conjugate to $\mu_y$, by Proposition \ref{prop-integraltensor}, $\sG_{x'}$ is $(\CG_W,\mu_y)$-adapted. By our assumption on the integral Hodge embedding $\wt{\iota}$ and Proposition \ref{prop-integraltensor}, the assumptions in Proposition \ref{prop-adapted} are satisfied. Hence, $x'$ is induced by a point of $\wh{M}_{G,y}^\loc$ by Proposition \ref{prop-adapted}. Since $x'$ is arbitrary, it follows that $\Phi(Z)\sset \wh{M}^\loc_{G,y}$. They are equal, as $Z$ and $\wh{M}^\loc_{G,y}$ are of the same dimension. 	
 \end{proof}

\begin{thm}\label{thm-hodgemodel}
	Assume the following conditions: \begin{enumerate}[(i)]
		\item $\rK_p=\CG(\BZ_p)$;
		\item $\iota_{\BQ_p}$ extends to a very good integral Hodge embedding $\wt{\iota}: (\CG,\mu_h)\hookrightarrow (\GL(V_{\BZ_p}),\mu_g)$, where $V_{\BZ_p}\sset V_{\BQ_p}$ is a \dfn{self-dual} $\BZ_p$-lattice with respect to $\psi$.
	\end{enumerate} 
	Then the $\CO_{\bE,(v)}$-schemes $\sS_\rK(\bG,X)$ and $\sS_{\rK_p}(\bG,X)$ constructed in Definition \ref{defn-model} satisfy the following properties.
	\begin{enumerate}
		\item $\sS_{\rK_p}(\bG,X)$ is an $\CO_{\bE,(v)}$-flat, $\bG(\BA_f^p)$-equivariant extension of $\Sh_{\rK_p}(\bG,X)$. The integral model $\sS_\rK(\bG,X)$ is canonical in the sense of \cite{pappas2021p}.
		\item For any discrete valuation ring $R$ of mixed characteristic $0$ and $p$, the natural map $$\sS_{\rK_{p}}(\bG,X)(R)\ra \sS_{\rK_{p}}(\bG,X)(R[1/p])$$ is a bijection.  
		\item $\sS_{\rK}(\bG,X)$ fits into a local model diagram $$\xymatrix{
		   &\wt{\sS}_\rK(\bG,X)_{\CO_E}\ar[ld]_{\pi}\ar[rd]^q &\\ \sS_\rK(\bG,X)_{\CO_E} & &\BM^\loc_{\CG,\mu_h}
		}$$ of $\CO_E$-schemes, in which $\pi$ is a $\CG$-torsor and $q$ is $\CG$-equivariant and smooth of relative dimension $\dim G$. 
		\item If in addition, we have $\CG=\CG^\circ$, then for each $x\in \sS_\rK(\bG,X)(k')$ with $k'/k_E$ finite, there is a point $y\in \BM^\loc_{\CG,\mu_h}(k')$ such that we have an isomorphism of henselizations $$\CO^h_{\sS_\rK(\bG,X),x}\simeq \CO^h_{\BM^\loc_{\CG,\mu_h},y}.$$
	\end{enumerate} 
\end{thm}
\begin{proof}
	Note that under the assumptions of the above theorem, we have Proposition \ref{prop-hodgelocal}, which extends \cite[Proposition 4.2.2]{kisin2018integral} to the case $p=2$. Then the proofs of \cite[Proposition 4.2.2,4.2.7]{kisin2018integral} and \cite[Theorem 7.1.3]{kisin2024integralmodelsshimuravarieties} go through, and we obtain the theorem. We note that the assumption (B) in \cite[Theorem 7.1.3]{kisin2024integralmodelsshimuravarieties} is not used in the proof.  
	
	The integral model $\sS_\rK(\bG,X)$ is canonical by the construction in \cite{pappas2021p}.
\end{proof}

\subsubsection{Integral models for parahoric level $\CG^\circ(\BZ_p)$}
Now we use previous results to study integral models with parahoric level structure. That is, the level at $p$ is given by $\CG^\circ(\BZ_p)$. Write $\rK_p^\circ=\CG^\circ(\BZ_p)$ and $\rK^\circ=\rK_p^\circ \rK^p$. 
 
Note that there is a natural finite morphism of Shimura varieties $\Sh_{\rK^\circ}(\bG,X)\ra \Sh_\rK(\bG,X)$.

\begin{defn}
	The integral model $\sS_{\rK^\circ}(\bG,X)$ for parahoric level $\rK^\circ$ is the normalization of $\sS_\rK(\bG,X)$ in $\Sh_{\rK^\circ}(\bG,X)$. We also set $$\sS_{\rK_p^\circ}(\bG,X)\coloneqq \varprojlim_{\rK^p}\sS_{\rK_p^\circ \rK^p}(\bG,X). $$
\end{defn}

Let $\bG^\mathrm{sc}$ denote the simply connected cover of $\bG^\der$ and set $\bC=\ker(\bG^{\mathrm{sc}}\ra \bG^\der)$. For a finite prime $\ell$ and $c\in H^1(\BQ,\bC)$, we write $c_\ell$ for the image of $c$ in $H^1(\BQ_\ell,\bC)$. We introduce the following assumption: \begin{flalign}
	     \text{If $c\in H^1(\BQ,\bC)$ satisfies $c_\ell=0$ for all $\ell\neq p$, then $c_p=0$. } \label{eq-c-assump}
\end{flalign}  

\begin{prop} \label{prop-paramodel}
	Assume that conditions $(i)$ and $(ii)$ in Theorem \ref{thm-hodgemodel} and condition (\ref{eq-c-assump}) are satisfied. 
	\begin{enumerate}
		\item Assume $\rK^p$ is sufficiently small. Then the covering $\sS_{\rK^\circ}(\bG,X)\ra \sS_\rK(\bG,X)$ is \etale, and splits over an unramified extension of $\CO_E$. 
		\item The geometrically connected components of $\sS_{\rK_p^\circ}(\bG,X)$ are defined over the maximal extension of $\bE$ that is unramified at primes above $p$.
	\end{enumerate}
\end{prop}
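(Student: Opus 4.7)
The generic fiber map $\Sh_{K^\circ}(G,X)\to \Sh_K(G,X)$ is Galois with group $\Delta:=K/K^\circ = \CG(\BZ_p)/\CG^\circ(\BZ_p)$, a finite abelian group (identifiable via Kottwitz as a subquotient of $\pi_1(G)_I$). When $K^p$ is sufficiently small, $\Delta$ acts freely. The plan for part (1) is to exhibit an étale extension of this cover to $\sS_K(G,X)$ and identify it with $\sS_{K^\circ}(G,X)$ via the universal property of normalization.

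Concretely, I would use the torsor structure from Theorem \ref{thm-hodgelocal}(3): the map $\pi:\wt{\sS}_K(G,X)_{\CO_E}\to \sS_K(G,X)_{\CO_E}$ is a $\CG$-torsor. Since $\CG^\circ\subset\CG$ is an open and closed normal subgroup scheme with finite étale quotient $\CG/\CG^\circ$, the induced map $\wt{\sS}_K/\CG^\circ \to \sS_K$ is finite étale. Descending from $\CO_E$ to $\CO_{\mathrm E,(v)}$ and checking that its generic fiber matches $\Sh_{K^\circ}(G,X)$ identifies it with the normalization $\sS_{K^\circ}(G,X)$. For the splitting statement, I would decompose $\Delta$ into characters and, for each character, realize the associated rank-one torsor as coming from a torus Shimura variety; hypothesis (\ref{eq-c-assump}) guarantees that each such character lifts to a global cocharacter whose associated reciprocity is unramified at $p$, so the cover splits over an unramified extension of $\CO_E$.

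For part (2), the set $\pi_0(\Sh_{K_p^\circ}(G,X)_{\ol{\BQ}})$ has Deligne's description in terms of a double coset on an abelian quotient $T\simeq G^{\ab}$, and the Galois action is governed by a reciprocity map $\Gal(\ol{\mathrm E}/\mathrm E)\to T(\BA_f)/\ol{T(\BQ)^+}$. At places above $p$, the parahoric condition forces the image of inertia to land in the connected parahoric of $T$, which is trivial modulo the obstruction in $H^1(\BQ,C)$ killed by hypothesis (\ref{eq-c-assump}). Hence the components are defined over the maximal extension of $\mathrm E$ unramified at primes above $p$.

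The main obstacle will be the identification step in (1): namely, showing rigorously that the étale quotient $\wt{\sS}_K/\CG^\circ$ coincides with the normalization defining $\sS_{K^\circ}$. This requires (a) an adèlic comparison on generic fibers showing $(\wt{\sS}_K/\CG^\circ)\otimes\mathrm E\simeq \Sh_{K^\circ}(G,X)$, and (b) normality of $\wt{\sS}_K/\CG^\circ$, which follows from normality of the local model (Hypothesis (\ref{hypo1})), smoothness of $q$ in Theorem \ref{thm-hodgelocal}(3), and descent of normality through the étale quotient by $\CG^\circ$. The hypothesis (\ref{eq-c-assump}) enters precisely to rule out the potential contribution of locally trivial classes in $H^1(\BQ,C)$ that would obstruct both the global identification in (1) and the reciprocity computation in (2).
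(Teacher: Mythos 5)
You have identified a plausible-sounding strategy, but the core move in part (1) does not work, for a structural reason. The group scheme $\CG/\CG^\circ$ over $\BZ_p$ is \emph{not} a finite \etale group scheme: $\CG$ and $\CG^\circ$ have the same (connected) generic fiber $G_{\BQ_p}$, so $\CG/\CG^\circ$ has trivial generic fiber while its special fiber is the finite group $\pi_0(\CG\otimes\BF_p)$. In particular it is not flat over $\BZ_p$, and quotienting the $\CG$-torsor $\wt{\sS}_K$ by the open subgroup $\CG^\circ$ does not produce a finite \etale cover of $\sS_K$. Worse, even if one makes sense of the quotient, its generic fiber over $\Sh_K$ is \emph{trivial} — it equals $\Sh_K$, not $\Sh_{K^\circ}$ — because over $\BQ_p$ there is nothing to quotient by. The identification you flag as ``the main obstacle'' (matching $(\wt{\sS}_K/\CG^\circ)\otimes\mathrm E$ with $\Sh_{K^\circ}(G,X)$) is therefore not merely hard; it is false as stated. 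The underlying issue is that the torsor $\wt{\sS}_K$ is a de Rham/crystalline torsor, and the cover $\Sh_{K^\circ}\to\Sh_K$ encodes $p$-adic \etale level structure; these carry different information integrally, and the component-group data lives entirely on the special fiber on the de Rham side.

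The argument of \cite[Prop. 4.3.7]{kisin2018integral} (to which this paper simply refers; no proof is given here) instead works character by character on the finite abelian group $\Delta=K_p/K_p^\circ\simeq\pi_0(\CG\otimes\BF_p)(\BF_p)$. For each character, one produces a finite flat cyclic cover of $\sS_K$ and proves \etaleness by reducing to a torus/abelian Shimura variety and using Hypothesis (\ref{eq-c-assump}) together with unramifiedness of the relevant reciprocity map at $p$; the splitting statement follows from the same torus computation. Your sketch of part (2) — Deligne's description of $\pi_0$ via an abelian quotient and the reciprocity law, with (\ref{eq-c-assump}) controlling the obstruction in $H^1(\BQ,C)$ — is in the right spirit and close to what \cite[Prop. 4.3.9]{kisin2018integral} does, though the precise way the parahoric condition enters is through the Kottwitz homomorphism on the torus $G^\ab$ rather than through an appeal to a ``connected parahoric of $T$.'' The honest fix for (1) is to abandon the de Rham torsor entirely and either reproduce the KP character-by-character torus argument, or give a deformation-theoretic argument (via Serre--Tate and the fact that the change from $K_p$ to $K_p^\circ$ does not change the formal deformation problem) that the formal completions of $\sS_{K^\circ}$ and $\sS_K$ at closed points above one another become isomorphic after an unramified base change.
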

\begin{proof}
	The proof follows the same argument as in \cite[Proposition 4.3.7, 4.3.9]{kisin2018integral}.
\end{proof}

\subsection{Shimura varieties of abelian type}\label{subsect-cond}
Let $(\bG,X)$ be a Shimura datum of Hodge type with a Hodge embedding $\iota: (\bG,X)\hookrightarrow (\bGSp(V,\psi),S^\pm)$. Denote by $G$ the base change $\bG_{\BQ_p}$. Let $\CG^\circ$ be the parahoric group scheme associated to some point $x\in \CB(G,\BQ_p)$.
 Assume  \begin{enumerate}[(i)]
	\item $\rK_p=\CG(\BZ_p)$;
	\item $\iota_{\BQ_p}$ extends to a very good integral Hodge embedding $\wt{\iota}: (\CG,\mu_h)\hookrightarrow (\GL(V_{\BZ_p}),\mu_g)$, where $V_{\BZ_p}\sset V_{\BQ_p}$ is a \dfn{self-dual} $\BZ_p$-lattice with respect to $\psi$; 
	\item $\bG$ satisfies condition (\ref{eq-c-assump});
	\item The center $Z_G$ of $G$ is an $R$-smooth torus (see \cite[\S 2.4]{kisin2024independenceellfrobeniusconjugacy}).
\end{enumerate}
Assume $(\bG_2, X_2)$ is a Shimura datum of abelian type such that there is a central isogeny $\bG^\der\ra \bG_2^\der$ inducing an isomorphism of Shimura data $(\bG^\ad,X^\ad)\simto (\bG_2^\ad,X_2^\ad)$. Here, $X^\ad$ denotes the $\bG^\ad(\BR)$-conjugacy class of $h^\ad: \BS\xrightarrow{h} \bG_{\BR}\ra \bG^\ad_\BR$ for some $h\in X$; $X_2^\ad$ is similar. 

As usual, we denote $\rK_p^\circ\coloneqq \CG^\circ(\BZ_p)\sset G(\BQ_p)$ and $G_2\coloneqq \bG_{2,\BQ_p}$. Let $x_2\in \CB(G_2,\BQ_p)$ be a lift of $x_2^\ad=x^\ad$ in the identification $\CB(G_2^\ad,\BQ_p)=\CB(G^\ad,\BQ_p)$. Let $\CG_2^\circ$ be the parahoric group scheme associated to $x_2$. Write $\rK_{2,p}^\circ=\CG^\circ_2(\BZ_p)$. Denote by $\bE_2$ the reflex field of $(\bG_2,X_2)$ and set $\bE'\coloneqq \bE\cdot \bE_2$, recall $\bE$ denotes the reflex field of $(\bG,X)$. We fix a place $v'$ of $\bE'$ above $v$. Denote by $E'$ the completion of $\bE'$ at $v'$.

Fix a connected component $X^+\sset X$. Denote by $\Sh_{\rK_p^\circ}(\bG,X)^+$ the geometrically connected component containing the image of $X^+\times 1$ in $$\varprojlim_{\rK^p} \bG(\BQ)\backslash X\times \bG(\BA_f)/\rK_p^\circ \rK^p.$$ By Proposition \ref{prop-paramodel} (2), $\Sh_{\rK_p^\circ}(\bG,X)^+$ is defined over the maximal extension $\bE^p$ of $\bE$ that is unramified at primes above $p$. We denote by $\sS_{\rK_p^\circ}(\bG,X)^+$ the component of $\sS_{\rK_p^\circ}(\bG,X)$ extending $\Sh_{\rK_p^\circ}(\bG,X)^+$, which is defined over $\CO_{\bE^p,(v)}$. 

\subsubsection{Integral models of Shimura varieties of abelian type}
We recall the notation of \cite{deligne1979varietes}. Let $H$ be a group equipped with an action of a group $\Delta$, and let $\Gamma\sset H$ be a $\Delta$-stable subgroup. Suppose we are given a $\Delta$-equivariant map $\varphi: \Gamma\ra \Delta$ where $\Delta$ acts on itself by inner automorphisms, and suppose that for $\gamma\in\Gamma$, $\varphi(\gamma)$ acts on $H$ as conjugation by $\gamma$. Then the elements of the form $(\gamma,\varphi(\gamma)\inverse)$ form a normal subgroup of the semi-direct product $H\rtimes\Delta$. We denote by $$H*_{\Gamma}\Delta$$ the quotient of $H\rtimes \Delta$ by this normal subgroup. 

For a subgroup $H\sset \bG(\BR)$, denote by $H_+$ the preimage in $H$ of the connected component $\bG^\ad(\BR)^+$ of the identity in $\bG^\ad(\BR)$. We write $\bG^\ad(\BQ)^+=\bG^\ad(\BQ)\cap \bG^\ad(\BR)^+$.

\begin{lemma}
	\label{lem-groups} Suppose $S$ is an affine $\BQ$-scheme, and let $S_{\BZ_p}$ be a flat affine $\BZ_p$-scheme with generic fiber $S\otimes_\BQ\BQ_p$. Then there exists a $\BZ_{(p)}$-scheme $S_{\BZ_{(p)}}$, which is unique up to isomorphism, with generic fiber $S$ and $S_{\BZ_{(p)}}\otimes_{\BZ_{(p)}}\BZ_p=S_{\BZ_p}$.
\end{lemma}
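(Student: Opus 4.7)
The plan is to patch the coordinate rings along the arithmetic fracture sequence. The key algebraic input is the exact sequence
\[
0 \lra \BZ_{(p)} \lra \BQ \oplus \BZ_p \lra \BQ_p \lra 0,
\]
where the left map is diagonal and the right map is the difference; exactness follows from $\BZ_{(p)} = \BQ \cap \BZ_p$ and $\BQ + \BZ_p = \BQ_p$ inside $\BQ_p$. Tensoring with any $\BZ_{(p)}$-flat module preserves exactness and produces a Mayer--Vietoris sequence for it.

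For existence, let $A := \Gamma(S,\CO_S)$ and $A_{\BZ_p} := \Gamma(S_{\BZ_p},\CO_{S_{\BZ_p}})$, and use the given iso $A_{\BZ_p} \otimes_{\BZ_p}\BQ_p \simeq A \otimes_\BQ \BQ_p$ to form
\[
A_{\BZ_{(p)}} := A \times_{A \otimes_\BQ \BQ_p} A_{\BZ_p}, \qquad S_{\BZ_{(p)}} := \Spec A_{\BZ_{(p)}}.
\]
I would then verify that the Milnor sequence $0 \to A_{\BZ_{(p)}} \to A \oplus A_{\BZ_p} \to A \otimes_\BQ \BQ_p \to 0$ is exact, and deduce $A_{\BZ_{(p)}} \otimes_{\BZ_{(p)}} \BQ \simeq A$ by tensoring with $\BQ$. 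For the base change to $\BZ_p$, both sides of the natural map $A_{\BZ_{(p)}} \otimes_{\BZ_{(p)}} \BZ_p \to A_{\BZ_p}$ are $\BZ_p$-flat, so it is enough to compare after inverting $p$ (both yield $A \otimes_\BQ \BQ_p$) and modulo $p$. Modulo $p$, the reduced Milnor sequence collapses---since $(A\otimes_\BQ\BQ_p)\otimes \BF_p = 0$---to $A_{\BZ_{(p)}}/p \simeq A/p \oplus A_{\BZ_p}/p = A_{\BZ_p}/p$, using that $A/p = 0$ because $A$ is a $\BQ$-algebra.

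For uniqueness, suppose $T = \Spec B$ is any other $\BZ_{(p)}$-scheme with the required base changes. Since $B \otimes_{\BZ_{(p)}} (\BQ \oplus \BZ_p) \simeq A \oplus A_{\BZ_p}$ is flat over $\BQ \oplus \BZ_p$, and $\BZ_{(p)} \to \BQ \oplus \BZ_p$ is faithfully flat, descent of flatness gives that $B$ is $\BZ_{(p)}$-flat. Tensoring the fracture sequence with the flat module $B$ yields
\[
0 \lra B \lra (B\otimes\BQ) \oplus (B\otimes\BZ_p) \lra B\otimes\BQ_p \lra 0,
\]
so $B \simeq A \times_{A\otimes\BQ_p} A_{\BZ_p} = A_{\BZ_{(p)}}$ canonically.

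I expect the main technical step to be the surjectivity in the Milnor sequence, i.e.\ that the map $\phi - \psi \colon A \oplus A_{\BZ_p} \to A\otimes_\BQ\BQ_p$ (given by the two natural inclusions) is surjective. To prove it, I would fix a $\BQ$-basis $\{e_i\}$ of $A$, write any $c \in A\otimes_\BQ \BQ_p$ as a finite sum $c = \sum e_i q_i$ with $q_i \in \BQ_p$, and decompose $q_i = y_i + p^N z'_i$ with $y_i \in \BQ$ and $z'_i \in \BZ_p$ (which is possible for every $N$ since $\BQ_p = \BQ + p^N\BZ_p$). Choosing $N$ larger than the finitely many $p$-adic denominators of the $e_i$ in the lattice $A_{\BZ_p} \subset A_{\BZ_p}[1/p] = A\otimes_\BQ\BQ_p$ forces $\sum e_i p^N z'_i \in A_{\BZ_p}$, while $\sum e_i y_i \in A$, giving the desired splitting of $c$.
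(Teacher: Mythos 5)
Your construction $A_{\BZ_{(p)}} := A \times_{A\otimes_\BQ\BQ_p} A_{\BZ_p}$ is, because both structure maps are injective, precisely the intersection $A \cap A_{\BZ_p}$ inside $A \otimes_\BQ \BQ_p$, which is exactly the paper's one-line construction, so your approach is the same; you have simply supplied the verifications that the paper leaves implicit. One small gap worth noting: the claim that $A_{\BZ_{(p)}} \otimes_{\BZ_{(p)}} \BZ_p$ is $\BZ_p$-flat requires observing that $A_{\BZ_{(p)}}$, being a $\BZ_{(p)}$-submodule of the $\BQ$-vector space $A$, is torsion-free and hence flat over the PID $\BZ_{(p)}$.
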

\begin{proof}
	Let $A$ (resp. $B$) be the affine coordinate ring of $S_{\BZ_p}$ (resp. $S$). We have $A\otimes_{\BZ_p}\BQ_p=B\otimes_\BQ\BQ_p$. Then we can take $S_{\BZ_{(p)}}$ to be $\Spec A\cap B$, where the intersection happens in $A\otimes_{\BZ_p}\BQ_p=B\otimes_\BQ\BQ_p$. Any $\BZ_{(p)}$-scheme $T$ with generic fiber $S$ and $T\otimes_{\BZ_{(p)}}\BZ_p=S_{\BZ_p}$ is necessarily isomorphic to $\Spec A\cap B$. 
\end{proof}

By applying the above lemma to the group schemes $\CG$ and $\CG^\circ$ over $\BZ_p$, we obtain $\BZ_{(p)}$-smooth affine group schemes $G_{\BZ_{(p)}}$ and $G^\circ\coloneqq G_{\BZ_{(p)}}^\circ$. Similarly, let $G^{\ad\circ}=G^{\ad\circ}_{\BZ_{(p)}}$ be the  $\BZ_{(p)}$-model of the parahoric group scheme associated to $x^\ad\in \CB(G^\ad,\BQ_p)$. 
Let $G^\ad=G_{\BZ_{(p)}}/Z$, where $Z$ denotes the Zariski closure in $G_{\BZ_{(p)}}$ of the center $\bZ$ of the $\BQ$-group $\bG$.  As we assume that the center $Z_G$ of $G$ is an $R$-smooth torus, we have $G^{\ad\circ}$ is the neutral component of $G^\ad$, see \cite[Lemma 4.6.2]{kisin2018integral} and \cite[Proposition 2.4.14]{kisin2024independenceellfrobeniusconjugacy}.

Following \cite[\S 4.6.3]{kisin2018integral}, we set \begin{flalign*}
	  \mathscr{A} (G_{\BZ_{(p)}})&\coloneqq  \bG(\BA_f^p)/Z(\BZ_{(p)})^-*_{G^\circ(\BZ_{(p)})_+/Z^\circ(\BZ_{(p)})} G^{\ad\circ}(\BZ_{(p)})^+, 
	  \\  \mathscr A(\bG) &\coloneqq  \bG(\BA_f)/\bZ(\BQ)^-*_{\bG(\BQ)_+/\bZ(\BQ)}\bG^\ad(\BQ)^+,
\end{flalign*}  and \begin{flalign*}
	\mathscr{A} (G_{\BZ_{(p)}})^\circ &\coloneqq  G^\circ(\BZ_{(p)})_+^- /Z^\circ(\BZ_{(p)})^-*_{G^\circ(\BZ_{(p)})_+/Z^\circ(\BZ_{(p)})} G^{\ad\circ}(\BZ_{(p)})^+,
	\\ \mathscr A(\bG)^\circ &\coloneqq  \bG(\BQ)_+^-/\bZ(\BQ)^-*_{\bG(\BQ)_+/\bZ(\BQ)}\bG^\ad(\BQ)^+.
\end{flalign*}  
Here, $G^\circ(\BZ_{(p)})_+^-$ is the closure of $G^\circ(\BZ_{(p)})_+$ in $\bG(\BA_f^p)$, and $Z^\circ$ is the Zariski closure of $\bZ$ in $G^\circ$.  
Similarly, we have $\mathscr{A} (G_{2,\BZ_{(p)}})$ and $\mathscr{A} (G_{2,\BZ_{(p)}})$. Since $G^{\ad\circ}$ is the neutral component of $G^\ad_{\BZ_{(p)}}$ (we assume $Z_G$ is an $R$-smooth torus), the action of $\mathscr A(G_{\BZ_{(p)}})$ on $\Sh_{\rK_p^\circ}(\bG,X)$ extends to $\sS_{\rK_p^\circ}(\bG,X)$. There is an injection by \cite[Lemma 4.6.10]{kisin2018integral}, $$\mathscr{A} (G_{\BZ_{(p)}})^\circ\backslash \mathscr{A} (G_{2,\BZ_{(p)}})\hookrightarrow \mathscr{A} (\bG)^\circ\backslash \mathscr A(\bG_2)/\rK_{2,p}^\circ.$$

Let $J\sset G_2(\BQ_p)$ be a set of coset representatives for the image of the above injection. 

\begin{defn}\label{defn-abelian}
	The integral model $\sS_{\rK_{2,p}^\circ}(\bG_2,X_2)$ for $\Sh_{\rK_{2,p}^\circ}(\bG_2,X_2)$ is $$[[\sS_{\rK_p^\circ}(\bG,X)^+\times \mathscr A(G_{2,\BZ_{(p)}})]/ \mathscr A(G_{\BZ_{(p)}})^\circ]^{|J|}.$$
\end{defn}
The scheme $\sS_{\rK^\circ_{2,p}}(\bG_2,X_2)$ is priori defined over $\CO_{\bE'^p, (v)} $, but it descends to an $\CO_{\bE',(v')}$-scheme with a $\bG_2(\BA^p_f)$-action, see \cite[Corollary 4.6.15]{kisin2018integral}. 

\begin{thm} \label{thm-main}
	Assume that conditions $(i)$ to $(iv)$ in the beginning of \S \ref{subsect-cond} are satisfied. 	
	\begin{enumerate}  
        \item The $\bE$-scheme $\Sh_{\rK_{2,p}^\circ}(\bG_2,X_2)$ admits a $\bG_2(\BA_f^p)$-equivariant extension to a flat normal $\CO_{\bE',(v')}$-scheme $\sS_{\rK^\circ_{2,p}}(\bG_2,X_2)$. Any sufficiently small $\rK^p_2\sset \bG_2(\BA_f^p)$ acts freely on $\sS_{\rK_{2,p}^\circ}(\bG_2,X_2)$, and the quotient $$\sS_{\rK^\circ_2}(\bG_2,X_2)\coloneqq \sS_{\rK_{2,p}}(\bG_2,X_2)/\rK^p_2$$ is a flat normal $\CO_{\bE',(v')}$-scheme extending $\Sh_{\bK_2^\circ}(\bG_2,X_2)$. 
		\item For any discrete valuation ring $R$ of mixed characteristic $0$ and $p$, the map $$\sS_{\rK_{2,p}^\circ}(\bG_2,X_2)(R)\ra \sS_{\rK_{2,p}^\circ}(\bG_2,X_2)(R[1/p])$$ is a bijection.
		\item There is a diagram of $\CO_{E'}$-schemes 
		    $$\xymatrix{
		   &\wt{\sS}_{\rK_{2,p}^\circ}^\ad\ar[ld]_{\pi}\ar[rd]^q &\\ \sS_{\rK_{2,p}^\circ}(\bG_2,X_2)_{\CO_{E'}} & &\BM^\loc_{\CG^\circ_2,\mu_{h_2}}\otimes_{\CO_{E_2}}\CO_{E'},
		}$$ where $\pi$ is a $\bG_2(\BA_f^p)$-equivariant $G_{2,\BZ_p}^{\ad}$-torsor, $q$ is $G_{2,\BZ_p}^{\ad}$-equivariant, and for any sufficiently small $\rK_2^p\sset \bG_2(\BA_f^p)$, the map $\wt{\sS}_{\rK_{2,p}^\circ}^\ad/\rK_2^p \ra \BM^\loc_{\CG^\circ_2,\mu_{h_2}}\otimes_{\CO_{E_2}}\CO_{E'}$ induced by $q$ is smooth of relative dimension $\dim \bG_2^\ad$. 
		If in addition, we have $\CG=\CG^\circ$, then $\pi$ reduces to a $G_{2,\BZ_p}^{\ad\circ}$-torsor. 
%		In such case, we have that if $\kappa'$ is a finite extension of $\kappa(v_2)$, and $y\in\sS_{K_{2,p}^\circ}(G_2,X_2)(\kappa')$, then there exists $z\in \BM^\loc_{\CG_2^\circ,\mu_{h_2}}(\kappa')$ such that we have an isomorphism of henselizations $$\CO_{\sS_{\rK_{2,p}^\circ}(\bG_2,X_2), y}^h\simeq \CO_{\BM^\loc_{\CG_2^\circ,\mu_{h_2}},z}^h.$$
	\end{enumerate}
\end{thm}

\begin{proof}
    Under the assumptions of the theorem, we can construct the integral model $\sS_{\rK_{2,p}^\circ}(\bG_2,X_2)$ as in Definition \ref{defn-abelian}. The properties of $\sS_{\rK_{2,p}^\circ}(\bG_2,X_2)$ are deduced from Theorem \ref{thm-hodgemodel} by following the arguments in \cite[Proposition 7.1.14]{kisin2024integralmodelsshimuravarieties} (cf. \cite[\S 4.4-4.6]{kisin2018integral}). Note that arguments in \cite[\S 4.4-4.6]{kisin2018integral} also work for $p=2$.
\end{proof}

\begin{remark}
    For a Shimura datum $(\bG_2,X_2)$ of abelian type as in Theorem \ref{thm-main}, we expect that the integral model $\sS_{\rK_2^\circ}(\bG_2,X_2)$ is canonical in the sense of \cite{pappas2021p}, which would imply that $\sS_{\rK_2^\circ}(\bG_2,X_2)$ is independent of the choice of a Shimura datum $(\bG,X)$, as well as the choice of a symplectic embedding $(\bG,X)\hookrightarrow (\bGSp,S^\pm)$. 
%    \begin{enumerate}
%    	\item Theorem \ref{thm1} in Case (A) of the Introduction follows from the above theorem. Note that the group $\bG$ in Theorem \ref{thm1} is denoted by $\bG_2$ here. 
%    	\item For a Shimura datum $(\bG_2,X_2)$ of abelian type as in Theorem \ref{thm-main}, we expect that the integral model $\sS_{\rK_2^\circ}(\bG_2,X_2)$ is canonical in the sense of \cite{pappas2021p}, which would imply that $\sS_{\rK_2^\circ}(\bG_2,X_2)$ is independent of the choice of a Shimura datum $(\bG,X)$, as well as the choice of a symplectic embedding $(\bG,X)\hookrightarrow (\bGSp,S^\pm)$.    
%    \end{enumerate}
\end{remark}

\subsubsection{Proof of Theorem \ref{thm1} in Case (A)} \label{caseA}

Now we start with a Shimura datum $(\bG_2,X_2)$ of abelian type with reflex field $\bE_2$, and denote by $\rK_{2,p}^\circ\sset G_2(\BQ_p)$ the parahoric subgroup associated to some $x_2\in \CB(G_2,\BQ_p)$.

\begin{lemma}
	\label{lem-chooseGX} 
	Suppose that $(\bG_2^\ad,X_2^\ad)$ has no factor of type $D^\BH$, $G_2$ is unramified over $\BQ_p$, and $\rK_{2,p}^\circ$ is contained in some hyperspecial subgroup. Then there exists a Shimura datum $(\bG,X)$ of Hodge type, together with a central isogeny $\bG^\der\ra \bG_2^\der$ inducing an isomorphism $(\bG^\ad, X^\ad)\simeq (\bG_2^\ad,X_2^\ad)$, such that the following conditions hold.
	\begin{enumerate}
		\item $\pi_1(G^\der)$ is trivial.  
		\item Any prime $v_2|p$ of $\bE_2$ splits completely in $\bE'=\bE\cdot \bE_2$. 
		\item $X_*(G^\ab)_{I_{\BQ_p}}$ is torsion free, where $G^\ab$ denotes the quotient $G/G^\der$ and $I_{\BQ_p}$ denotes the inertia subgroup of $\Gal(\ol{\BQ}_p/\BQ_p)$.
		\item Conditions $(i)$ to $(iv)$ in the beginning of \S \ref{subsect-cond} are satisfied.
	\end{enumerate}
\end{lemma}
\begin{proof}
	As discussed in \cite[2.4.5]{kisin2024independenceellfrobeniusconjugacy}, the proof of \cite[Theorem 4.2]{edixhoven1992neron} implies that a tamely ramified torus is $R$-smooth. As we assume $G_2$ is unramified (in particular, $G_2$ is tamely ramified), by \cite[Lemma 4.6.22]{kisin2018integral}, it remains to show that there exists a Hodge embedding $\iota: (\bG,X)\hookrightarrow (\bGSp(V,\psi),S^\pm)$ satisfying condition $(ii)$ in the beginning of \S \ref{subsect-cond}. Since $\pi_1(G^\der)$ is trivial by our choice of $(\bG,X)$, we may assume that, by Zarhin's trick and \cite[Corollary 2.3.16]{kisin2018integral}, there exists a good integral Hodge embedding $\wt{\iota}: (\CG,\mu_h)\hookrightarrow (\GL(\Lambda),\mu_g)$ extending $\iota_{\BQ_p}$, where $\Lambda\sset V_{\BQ_p}$ is a {self-dual} $\BZ_p$-lattice with respect to $\psi_{\BQ_p}$. Denote $\GSp\coloneqq \bGSp(V,\psi)_{\BQ_p}$. By our assumptions and Theorem \ref{thm-tamefix}, there is a tame Galois extension $F/\BQ_p$ with Galois group $\Gamma$ such that in the diagram
	$$\xymatrix{
	    \CB(G,\BQ_p) \ar@{^{(}->}[r]^-{\iota}\ar@{^{(}->}[d] &\CB(\GSp,\BQ_p)\ar@{^{(}->}[d] \\ \CB(G_F,F)\ar@{^{(}->}[r] &\CB(\GSp_F,F)
	}$$
	of Bruhat-Tits buildings, we have \begin{itemize}
		\item the image of $x\in \CB(G,\BQ_p)$ in $\CB(G_F,F)$ is hyperspecial, and determines a reductive group $\CH$ over $\CO_F$ satisfying $\CG\simeq(\Res_{\CO_F/\BZ_p}\CH)^\Gamma$;
		\item the point $\iota(x)$ is hyperspecial corresponding to the self-dual lattice $\Lambda$, and its image in $\CB(\GSp_F,F)$ is hyperspecial corresponding to the  lattice $\wt{\Lambda}\coloneqq \Lambda\otimes_{\BZ_p}\CO_F$, which is self-dual with respect to the pairing $\psi_F$.
	\end{itemize}  
	By \cite[Lemma 3.1]{dokchitser2011}, there exist a totally real number field $\BF/\BQ$ and a place $w$ above $p$ such that $\BF_w\simeq F$. Let $\wt{V}$ denote the $\BQ$-vector space $V\otimes_\BQ\BF$. We pick an element $a\in \BF$ such that its image in $F$ generates the different ideal $\delta_{F/\BQ_p}$. Then $\wt{V}$ is equipped with a perfect alternating pairing given by $$\wt{\psi}(x,y)\coloneqq \Tr_{\BF/\BQ}(a\inverse\psi_F(x,y))$$ for $x,y\in \wt{V}$. Then $\wt{\Lambda}$ is self-dual with respect to $\wt{\psi}$, and the closed immersion \begin{flalign*}
		    \wt{\iota}: \CG\hookrightarrow \Res_{\CO_F/\BZ_p}\CH\hookrightarrow \GL(\wt{\Lambda})
	\end{flalign*} extends the Hodge embedding $G\hookrightarrow \GSp\hookrightarrow \bGSp(\wt{V},\wt{\psi})_{\BQ_p}\sset \GL(\wt{V}_{\BQ_p})$. As $\pi_1(G^\der)$ is trivial and $G$ is unramified over $\BQ_p$, the Pappas-Zhu local model for $(\CG,\mu_h)$ is isomorphic to $\BM^\loc_{\CG,\mu_h}$, and $\wt{\iota}$ is a good integral Hodge embedding by \cite[Proposition 2.3.7]{kisin2018integral}. As $(\bG_2^\ad,X_2^\ad)$ has no factor of type $D^\BH$, the closed immersion $\Res_{\CO_F/\BZ_p}\CH\hookrightarrow \GL(\wt{\Lambda})$ gives a very good integral Hodge embedding by \cite[Proposition 5.3.10, Theorem 1.2.3]{kisin2024integralmodelsshimuravarieties}. Since $\CG=(\Res_{\CO_F/\BZ_p}\CH)^\Gamma$, we obtain that $\wt{\iota}$ is also very good by \cite[Corollary 5.3.4]{kisin2024integralmodelsshimuravarieties}. We then obtain a desired Hodge embedding by replacing $\iota$ by the Hodge embedding $(\bG,X)\hookrightarrow (\bGSp(\wt{V},\wt{\psi}),S^\pm(\wt{V},\wt{\psi}))$.
\end{proof}

\begin{corollary}\label{coro-abelian}
	Under the same assumptions as in Lemma \ref{lem-chooseGX}, the integral model $\sS_{\rK^\circ_{2,p}}(\bG_2,X_2)$ constructed in Definition \ref{defn-abelian} is defined over $\CO_{\bE_2,(v_2)}$ for some fixed prime $v_2|p$ of $\bE_2$. Moreover, we have $\CG=\CG^\circ$, and the conclusions of Theorem \ref{thm-main} hold. In particular, if $\kappa$ is a finite extension of $\kappa(v_2)$ and $y\in\sS_{\rK_{2,p}^\circ}(\bG_2,X_2)(\kappa)$, then there exists $z\in \BM^\loc_{\CG_2^\circ,\mu_{h_2}}(\kappa)$ such that we have an isomorphism of henselizations $$\CO_{\sS_{\rK_{2,p}^\circ}(\bG_2,X_2), y}^h\simeq \CO_{\BM^\loc_{\CG_2^\circ,\mu_{h_2}},z}^h.$$ 
\end{corollary}
\begin{proof}
	By Theorem \ref{thm-main} and Lemma \ref{lem-chooseGX} (4), the integer model $\sS_{\rK^\circ_{2,p}}(\bG_2,X_2)$ is constructed using the Shimura datum $(\bG,X)$ chosen in Lemma \ref{lem-chooseGX}.  By Lemma \ref{lem-chooseGX} (2), there exists a prime  $v_2|p$ of $\bE_2$ extending to the prime $v'$ of $\bE'$, and we have $\CO_{\bE_2,(v_2)}\simeq \CO_{\bE',(v')}$. Hence, the scheme $\sS_{\rK^\circ_{2,p}}(\bG_2,X_2)$ is defined over $\CO_{\bE_2,(v_2)}$. Since $\pi_1(G^\der)$ is trivial by Lemma \ref{lem-chooseGX} (1), we have $\pi_1(G)=X_*(G^\ab)$, and $\pi_1(G)_{I_{\BQ_p}}$ is torsion-free by Lemma \ref{lem-chooseGX} (3). In particular, we have $\CG=\CG^\circ$.
\end{proof}

By Theorem \ref{thm-main} and Corollary \ref{coro-abelian}, we obtain Theorem \ref{thm1} in Case (A). Note that the group $\bG$ in Theorem \ref{thm1} is denoted by $\bG_2$ here.

\subsection{Integral models of unitary Shimura varieties}  \label{sec-ulm}
In this subsection, we consider Shimura varieties in Case (B) of \S \ref{intro-results}. We show that, in this case, the assumptions in Theorem \ref{thm-main} are satisfied, allowing us to construct integral models of Shimura varieties for which the conclusions of Theorem \ref{thm-main} hold.  

\subsubsection{} \label{subsubsec-unitary}
Let $n=2m+1\geq 3$ be an odd integer. Let $\bF/\BQ$ be an imaginary quadratic extension such that $2$ is ramified in $\bF$. Then $F\coloneqq \bF\otimes_\BQ\BQ_2$ is a ramified quadratic extension of $\BQ_2$ with residue field $\BF_2$.
 Let $(\bV,\bh)$ be an $n$-dimensional non-degenerate $\bF/\BQ$-hermitian space of signature $(n-1,1)$. Denote by $$\bG\coloneqq \GU(\bV,\bh)$$ the unitary similitude group over $\BQ$ attached to $(\bV,\bh)$. 
% The vector space $V\coloneqq \bV\otimes_{\bF}F$ equipped with the $F/\BQ_2$-hermitian form $h\coloneqq \bh_{\BQ_2}$ defines a unitary similitude group $G=\bG_{\BQ_2}$ over $\BQ_2$. 
%\begin{lemma} \label{lem-splith}
%	For any non-degenerate hermitian form $h'$ on $V$, we have $G\simeq \GU(V,h')$.
%\end{lemma}
%\begin{proof}
%	By the classification of hermitian spaces over local fields (see, for example, \cite[Theorem 3.1]{jacobowitz1962hermitian}), there are two isomorphism classes of $n$-dimensional non-degenerate hermitian spaces over $\BQ_2$, classified by discriminants in $\BQ_2\cross/N_{F/\BQ_2}(F\cross)$. Let $a\in\BQ_2\cross$ be an element not in $N_{F/\BQ_2}(F\cross)$. Define a hermitian form $h_a$ on $V$ by setting $h_a(x,y)\coloneqq ah(x,y)$ for $x,y\in V$. Since $\disc(h_a)=a^{n}\disc(h)$ and $n$ is odd, the hermitian spaces $(V,h)$ and $(V,h_a)$ represent the two isomorphism classes of $n$-dimensional non-degenerate hermitian spaces over $\BQ_2$. Moreover, multiplication by $a$ induces an isomorphism between $\GU(V,h)$ and $\GU(V,h_a)$. Hence, the lemma follows. 
%\end{proof}
% By Lemma \ref{lem-splith}, we may assume that the hermitian form $h$ is split, that is, there exists an $F$-basis $e_1,\ldots,e_n$ of $V$ such that $h(e_i,e_j)=\delta_{i,n+1-j}$. 
 Suppose that $$\rK_2\sset \bG(\BQ_2)$$ is a special parahoric subgroup in the sense of Bruhat-Tits theory.  For an open compact subgroup of the form $\rK=\rK_2\rK^2\sset \bG(\BA_f)$, where $\rK^2\sset \bG(\BA_f^2)$ is open compact and sufficiently small, we can associate a Shimura variety $\Sh_\rK(\bG,X)$ of level $\rK$ as in \cite[\S 1.1]{pappas2009local}. Then $\Sh_\rK(\bG,X)$ is a quasi-projective smooth variety of dimension $n-1$ over the reflex field $\bF$. Denote by $\Sh_\rK(\bG,X)_F$ the base change of $\Sh_\rK(\bG,X)$ to $F$.  

%Let $a\in \bF\cross$ be an element such that $a=-\ol{a}$. Then the hermitian form $\bh$ on $\bV$ induces a perfect alternating $\BQ$-bilinear form $\psi$ on $\bV$ by setting $$\psi(x,y)\coloneqq  \Tr_{\bF/\BQ}(a\inverse \bh(x,y)), \text{\ for $x,y\in \bV$}.$$ 
%Denote by $\bGSp(\bV,\psi)$ the symplectic similitude group over $\BQ$ associated with the above pairing. Then we obtain an embedding $\iota_1: \bG\hookrightarrow \bGSp(\bV,\psi)$, which also induces an embedding of Shimura data \begin{flalign*}
%	   \iota_1: (\bG,X)\hookrightarrow (\bGSp(\bV,\psi), S^\pm(\bV,\psi)).
%\end{flalign*} 

\subsubsection{Unitary local models} \label{subsubunitary}
Before proving Theorem \ref{thm1} in Case (B), we first review the results from \cite{yang24} and then apply them to show that the tangent spaces of unitary local models at closed points are spanned by smooth formal curves. This property is a key ingredient in the construction of very good Hodge embeddings. 

Let $F_0/\BQ_2$ be a finite extension\footnote{In our application, we take $F_0$ to be $\BQ_2$.} and $F$ be a (wildly) ramified quadratic extension of $F_0$. Let $k$ denote the algebraic closure of the finite field $\BF_2$. Let $(V,h)$ be a hermitian space, where $V$ is an $F$-vector space of dimension $n=2m+1 \geq 3$ and $h: V\times V\ra F$ is a non-degenerate hermitian form. Assume that $h$ is split, i.e., there exists an $F$-basis $(e_i)_{1\leq i\leq n}$ of $V$ such that $h(e_i,e_j)=\delta_{i,n+1-j}$ for $1\leq i,j\leq n$. Let $G\coloneqq \GU(V,h)$ denote the unitary similitude group over $F_0$ attached to $(V,h)$. There are two conjugacy classes of \dfn{special} parahoric subgroups of $G$. As in \cite{yang24}, we index them by $I=\cbra{0}$ or $\cbra{m}$. Let $\CG_I$ denote the special parahoric group scheme corresponding to $I=\cbra{0}$ or $\cbra{m}$. By \cite[1.2.3]{pappas2009local}, $\sG_I$ is a Bruhat-Tits stabilizer group scheme. Let $\mu$ denote the geometric cocharacter $\BG_{m,\ol{F}}\ra G_{\ol{F}}\simeq \GL_{n,\ol{F}}\times \BG_{m,\ol{F}}$ given by $z\mapsto (\diag(z,1^{(n-1)}),z)$. Let $\BM^\loc_{\CG_I,\mu}$ be the local model attached to $(\CG_I,\mu)$ by Theorem \ref{thm-localmodels}. The following theorem is essentially contained in \cite{yang24}.
\begin{thm} \label{thm-goodlattice}
	Let $\RM^\loc_I$ denote the unitary local model in \cite{yang24} corresponding to $I=\cbra{0}$ or $\cbra{m}$. 
	\begin{enumerate}
		\item There exists a good integral Hodge embedding $(\CG_I,\mu)\hookrightarrow (\GL(\Lambda_I),\mu_n)$ for some lattice $\Lambda_I\sset V$. The group $$\cbra{g\in G(F_0)\ |\ g\Lambda_I=\Lambda_I}$$ is a special parahoric subgroup of $G(F_0)$. 
		\item The scheme $\RM^\loc_I$ is isomorphic to $\BM^\loc_{\CG_I,\mu}$.
		\item If $I=\cbra{0}$ (resp. $I=\cbra{m}$), then $\RM^\loc_I$ is $\CO_F$-smooth on the complement of a single closed point (resp. $\CO_F$-smooth). 
	\end{enumerate}
\end{thm}
\begin{proof}
	(2) and (3) follow from \cite[Theorem 1.2, 1.4]{yang24}. It remains to prove (1). Choose $\Lambda_I$ as in \cite[Theorem 1.1]{yang24}. By the concrete description of the parahoric group scheme $\CG_I$ in \cite[Theorem 3.9]{yang24}, there is a closed immersion $\iota: \CG_I\hookrightarrow \GL(\Lambda_I)$. The base change $\iota_{\BQ_p}$ is the standard Hodge embedding $G=\GU(V,h) \hookrightarrow \GL(V)$, which sends the conjugacy class $\cbra{\mu}$ to $\cbra{\mu_n}$. As $G$ contains the scalars, $\iota$ is an integral Hodge embedding. Moreover, $\iota$ is good, since it induces a closed immersion $\BM^\loc_{\CG_I,\mu}\simeq \RM^\loc_I\hookrightarrow \Gr(n,\Lambda_I)_{\CO_F}$ (cf. the proof of \cite[Theorem 8.4]{yang24}).
\end{proof}

\begin{thm} \label{thm-tangent}
   For any closed point $x\in \RM^\loc_I(k)$, the tangent space of the special fiber $\RM^\loc_I\otimes_{\CO_F}k$ at $x$ is spanned by smooth formal curves.
\end{thm}

The proof of Theorem \ref{thm-tangent} is divided into the following two cases. 

\subsubsection*{The case $I=\cbra{m}$}
By Theorem \ref{thm-goodlattice} (3), the local model $\RM^\loc_{\cbra{m}}$ is smooth over $\CO_F$. Clearly Theorem \ref{thm-tangent} holds in this case by the infinitesimal lifting property of smooth morphisms. 

\subsubsection*{The case $I=\cbra{0}$}
By Theorem \ref{thm-goodlattice} (3), $\RM^\loc_{\cbra{0}}$ is $\CO_F$-smooth on the complement of a single closed point, which we will call the \dfn{worst point}. To prove Theorem \ref{thm-tangent} in this case, it suffices to prove the tangent space of $\RM^\loc_{\cbra{0}}\otimes_{\CO_F}k$ at the worst point is spanned by smooth formal curves.
The following proposition directly follows from \cite[Theorem 1.2, 1.3]{yang24}. 

\begin{prop} \label{prop-localmod}
	Let $A$ (\resp $B$) be a $2m\times 2m$ (resp. $2m\times 1$) matrix with variables as entries. Let $H_{2m}$ denote the $2m\times 2m$ anti-diagonal unit matrix. There is an open affine subscheme $\RU_{\cbra{0}}^\loc$ of $\RM^\loc_{\cbra{0}} \otimes_{\CO_F} k$ which contains the worst point such that $\RU^\loc_{\cbra{0}}$ is isomorphic to 
	\begin{flalign*}
		  \Spec \frac{k[A|B]}{(\wedge^2(A|B), A-A^t, \frac{\tr(H_{2m}A)}{2}A+BB^t)}.
	\end{flalign*} 
	(We remark that under the relation $A-A^t=0$, the polynomial $\tr(H_{2m}A)$, which is the sum of the anti-diagonal entries of $A$, is indeed divisible by $2$ in $k[A]$.) Moreover, the worst point corresponds to the point defined by $A=B=0$ and $\RU^\loc_{\cbra{0}}$ is smooth on the complement of the worst point. 
\end{prop}

\begin{defn} \label{defn-tangent}
	Let $X$ be an affine scheme of finite type over $k$. Let $x\in X(k)$ be a $k$-point. We may express $X$ as a closed subscheme of $\BA^d=\Spec k[T_1,\ldots,T_d]$ defined by an ideal $\fa\sset k[T_1,\ldots,T_d]$ such that $x$ is the origin of $\BA^d$. 
	\begin{enumerate}
		\item For a polynomial $f\in k[T_1,\ldots,T_d]$, write $f=\sum_{i=r}^Nf_i$ as a decomposition into homogeneous polynomials with $f_r\neq 0$. Denote by $f^*$ (resp. $f^{(1)}$) the lowest degree term $f_r$ (resp. $f_1$). If $r\geq 2$, set $f_1=0$.   
		\item Denote by $\fa^*$ (resp. $\fa^{(1)}$) the ideal in $k[T_1,\ldots,T_d]$ generated by $f^*$ (resp. $f^{(1)}$), for all $f\in \fa$. The \dfn{tangent cone} $TC_xX$ (resp. \dfn{schematic tangent space} $T_x^{sch}X$) of $X$ at $x$ is the scheme $\Spec k[T_1,\ldots,T_d]/\fa^*$ (resp. $\Spec k[T_1,\ldots,T_d]/\fa^{(1)}$).
	\end{enumerate}
\end{defn}
Note that the definition of $TC_xX$ (resp. $T_x^{sch}X$) is independent of the embeddings of $X$ in affine spaces. See \cite[Chapter III, \S 3, 4]{mumford1999red}. Clearly $T_x^{sch}X$ is a linear subspace of $\BA^d$ and there is a closed immersion $TC_xX\hookrightarrow T_x^{sch}X$. Note that there is a natural bijection between the $k$-points $T_x^{sch}X(k)$ and the tangent space $T_xX$, see \cite[\S 4]{mumford1999red}. Concretely, for any $z\in T^{sch}_xX(k)$ corresponding to a $k$-algebra homomorphism $z: k[T_1,\ldots,T_d]/\fa^{(1)} \ra k$, we can associate a $k$-algebra homomorphism $t_z: k[T_1,\ldots,T_d]/\fa \ra k[t]/(t^2)$ via $T_i\mapsto z(T_i)t$. The morphism $t_z$ defines a tangent vector of $X$ at $x$.

%\begin{defn} \label{defn-tangent}
%	Let $X$ be an affine scheme of finite type over $k$. Let $x\in X(k)$ be a $k$-point. We may express $X$ as a closed scheme of $\BA^d=\Spec k[T_1,\ldots,T_d]$ defined by an ideal $\fa\sset k[T_1,\ldots,T_d]$ such that $x$ is the origin of $\BA^d$. 
%	\begin{enumerate}
%		\item For a polynomial $f\in k[T_1,\ldots,T_d]$, write $f=\sum_{i=r}^Nf_i$ as a decomposition into homogeneous polynomials with $f_r\neq 0$. Denote by $f^*$ the lowest degree term $f_r$.    
%		\item Denote by $\fa^*$ the ideal in $k[T_1,\ldots,T_d]$ generated by $f^*$, for all $f\in \fa$. The \dfn{tangent cone} $TC_xX$ of $X$ at $x$ is the scheme $\Spec k[T_1,\ldots,T_d]/\fa^*$.
%	\end{enumerate}
%\end{defn}
%Note that the definition of $TC_xX$ is independent of the embeddings of $X$ into affine spaces. See \cite[Chapter III, \S 3]{mumford1999red}. 

\begin{lemma} \label{lem-spantc}
	Let $X$ be a reduced affine scheme of finite type over $k$. Let $x\in X(k)$. Assume that there exists a closed immersion $i: X\hookrightarrow \BA^d$ such that $X$ is defined by a homogeneous ideal $\fa$ and $i(x)$ is the origin $O$ of $\BA^d$. Then the set $TC_xX(k)$ spans the $k$-vector space $T_xX$. 
\end{lemma}
\begin{proof}
	Without loss of generality, we may assume that $i$ does not factor through any (proper) linear subspace of $\BA^d$. As $X$ is reduced, the image $i(X)$ is not contained in any (proper) linear subspace of $\BA^d$. Since $\fa$ is homogeneous, $X$ is isomorphic to the tangent cone $TC_xX$ and $i$ is identified with the embedding $TC_xX\hookrightarrow T_x^{sch}X\hookrightarrow T_O^{sch}\BA^d$. Let $W$ denote the subspace in $T_xX$ spanned by $TC_xX(k)$. Then we have a linear subspace $W^{sch}\sset \BA^d$ such that $W^{sch}(k)=W$. We obtain a factorization $$i: X\hookrightarrow W^{sch}\hookrightarrow T_x^{sch}X\hookrightarrow \BA^d.$$ Since $i: X\hookrightarrow\BA^d$ does not factor through any proper linear subspace of $\BA^d$, it forces that $W^{sch}=T_x^{sch}X=\BA^d$, and hence, $W=T_xX$. 
\end{proof}

\begin{corollary} \label{coro-span}
	Under the same assumptions as in Lemma \ref{lem-spantc}, the tangent space $T_xX$ is spanned by smooth formal curves.
\end{corollary}
\begin{proof}
	Denote $X=\Spec R=\Spec k[T_1,\ldots,T_d]/\fa$. By assumption, the tangent cone $TC_xX$ is isomorphic to $X$. Recall that for a $k$-point $z\in TC_xX(k)$ corresponding to $z: R=k[T_1,\ldots,T_d]/\fa\ra k$, the associated tangent vector $t_z\in X(k[t]/(t^2))$ is given by the $k$-algebra homomorphism $R\ra k[t]/(t^2)$ sending $T_i\mapsto z(T_i)t$. Define a $k$-algebra homomorphism $\wt{t}_z: k[T_1,\ldots,T_d] \ra k[[t]]$ via $T_i\mapsto z(T_i)t$. For any homogeneous polynomial $f\in \fa$, we have \begin{flalign*}
		    \wt{t}_z(f)=f(z(T_1)t,\ldots, z(T_d)t)= t^{\deg f}f(z(T_1),\ldots,z(T_d))=0.
	\end{flalign*} Hence, the map $\wt{t}_z$ factors through $R/\fa$. In other words, the tangent vector $t_z$ lifts to the smooth formal curve $\wt{t}_z\in X(k[[t]])$. Now the corollary follows from Lemma \ref{lem-spantc} immediately.  
\end{proof}

By Proposition \ref{prop-localmod}, the scheme $\RU^\loc_{\cbra{0}}\otimes_{\CO_F}k$ is defined by a homogeneous ideal under the obvious embedding $\RU^\loc_{\cbra{0}}\otimes_{\CO_F}k \hookrightarrow \Spec k[A|B]$, which sends the worst point to the origin. By Corollary \ref{coro-span}, we obtain the following.

\begin{corollary} \label{cor-cone}
	The tangent space of $\RM^\loc_{\cbra{0}}\otimes_{\CO_F}k$ at the worst point is spanned by smooth formal curves.  
\end{corollary}

This proves Theorem \ref{thm-tangent} in the case $I=\cbra{0}$.

\subsubsection{Proof of Theorem \ref{thm1} in Case (B)} \label{sec-application}
Let us keep the notation as in \S \ref{subsubsec-unitary}.
%Let $\bF/\BQ$ be an imaginary quadratic extension such that $2$ is ramified in $\bF$. Then $F\coloneqq \bF\otimes_\BQ\BQ_2$ is a ramified quadratic extension of $\BQ_2$ with residue field $\BF_2$.
%Let $n=2m+1\geq 3$ be an odd integer. 
% Let $(\bV,\bh)$ be an $n$-dimensional non-degenerate $\bF/\BQ$-hermitian space of signature $(n-1,1)$. Denote by $\bG\coloneqq \GU(\bV,\bh)$ the unitary similitude group over $\BQ$ attached to $(\bV,\bh)$. 
 The vector space $V\coloneqq \bV\otimes_{\bF}F$ equipped with the $F/\BQ_2$-hermitian form $h\coloneqq \bh_{\BQ_2}$ defines a unitary similitude group $G=\bG_{\BQ_2}$ over $\BQ_2$. 
\begin{lemma} \label{lem-splith}
	For any non-degenerate hermitian form $h'$ on $V$, we have $G\simeq \GU(V,h')$.
\end{lemma}
\begin{proof}
	By the classification of hermitian spaces over local fields (see, for example, \cite[Theorem 3.1]{jacobowitz1962hermitian}), there are two isomorphism classes of $n$-dimensional non-degenerate hermitian spaces over $\BQ_2$, classified by discriminants in $\BQ_2\cross/N_{F/\BQ_2}(F\cross)$. Let $a\in\BQ_2\cross$ be an element not in $N_{F/\BQ_2}(F\cross)$. Define a hermitian form $h_a$ on $V$ by setting $h_a(x,y)\coloneqq ah(x,y)$ for $x,y\in V$. Since $\disc(h_a)=a^{n}\disc(h)$ and $n$ is odd, the hermitian spaces $(V,h)$ and $(V,h_a)$ represent the two isomorphism classes of $n$-dimensional non-degenerate hermitian spaces over $\BQ_2$. Moreover, multiplication by $a$ induces an isomorphism between $\GU(V,h)$ and $\GU(V,h_a)$. Hence, the lemma follows. 
\end{proof}

 By Lemma \ref{lem-splith}, we may assume that the hermitian form $h$ is split, that is, there exists an $F$-basis $e_1,\ldots,e_n$ of $V$ such that $h(e_i,e_j)=\delta_{i,n+1-j}$. Up to conjugation, we may assume the special parahoric subgroup $\rK_2\sset \bG(\BQ_2)$ corresponds to $I=\cbra{0}$ or $\cbra{m}$ as in \S \ref{subsubunitary}.  
% For an open compact subgroup of the form $\rK=\rK_2\rK^2\sset \bG(\BA_f)$, where $\rK^2\sset \bG(\BA_f^2)$ is open compact and sufficiently small, we can associate a Shimura variety $\Sh_\rK(\bG,X)$ of level $\rK$ as in \cite[\S 1.1]{pappas2009local}. Then $\Sh_\rK(\bG,X)$ is a quasi-projective smooth variety of dimension $n-1$ over the reflex field $\bF$. Denote by $\Sh_\rK(\bG,X)_F$ the base change of $\Sh_\rK(\bG,X)$ to $F$.  

Let $a\in \bF\cross$ be an element such that $a=-\ol{a}$. Then the hermitian form $\bh$ on $\bV$ induces a perfect alternating $\BQ$-bilinear form $\psi$ on $\bV$ by setting $$\psi(x,y)\coloneqq  \Tr_{\bF/\BQ}(a\inverse \bh(x,y)), \text{\ for $x,y\in \bV$}.$$ 
Denote by $\bGSp(\bV,\psi)$ the symplectic similitude group over $\BQ$ associated with the above pairing. Then we obtain an embedding $\iota_1: \bG\hookrightarrow \bGSp(\bV,\psi)$, which also induces an embedding of Shimura data \begin{flalign*}
	   \iota_1: (\bG,X)\hookrightarrow (\bGSp(\bV,\psi), S^\pm(\bV,\psi)).
\end{flalign*} 
Let $\Lambda_I\sset V$ be the lattice as in Theorem \ref{thm-goodlattice}. Then by Theorem \ref{thm-goodlattice} (1), there exists a good integral Hodge embedding $\wt{\iota}_1: (\CG_I,\mu)\hookrightarrow (\GL(\Lambda_I),\mu_n)$ extending $\iota_{1,\BQ_2}$. By Theorem \ref{thm-tangent} and Lemma \ref{lem-verygood}, $\wt{\iota}_1$ is very good. Denote by $\Lambda_I^\#\sset V$ the dual lattice of $\Lambda_I$ with respect to $\psi$.  Set $\Lambda\coloneqq (\Lambda_I)^4\oplus(\Lambda_I^\#)^4\sset V^{8}$. Using Zarhin's trick as in the proof of \cite[Proposition 7.2.10 (3)]{kisin2024integralmodelsshimuravarieties}, there exists a non-degenerate alternating pairing $\psi'$ on $\bV^8$ such that $\Lambda$ is self-dual with respect to $\psi'_{\BQ_2}$, and an embedding of Shimura data \begin{flalign*}
	     \iota: (\bG,X)\hookrightarrow (\bGSp(\bV^8,\psi'), S^\pm(\bV^8,\psi'))
\end{flalign*} such that $\iota$ extends to a very good integral Hodge embedding $(\CG_I,\mu)\hookrightarrow (\GL(\Lambda),\mu_{8n})$. 

Denote $(\bGSp,S^\pm)\coloneqq (\GSp(\bV^8,\psi'), S^\pm(\bV^8,\psi'))$. Then we obtain an embedding of Shimura data \begin{flalign*}
	   \iota: (\bG,X)\hookrightarrow (\bGSp, S^\pm).
\end{flalign*} Moreover, the embedding $\iota_{\BQ_2}$ extends to a very good integral Hodge embedding by previous discussion. Note that for odd unitary similitude groups, the parahoric group scheme corresponding to $\rK_2$ is connected by \cite[1.2.3]{pappas2009local}. In particular, the assumptions in Theorem \ref{thm-hodgemodel} are satisfied and we obtain the following theorem.

\begin{thm}
	There exists a normal flat $\CO_F$-scheme $\sS_\rK(\bG,X)$ extending $\Sh_\rK(\bG,X)$ such that the conclusions of Theorem \ref{thm-hodgemodel} hold for $\sS_\rK(\bG,X)$.  
\end{thm}

This finishes the proof of Theorem \ref{thm1} in Case (B).

\appendix
\section{Bruhat-Tits group schemes and tame Galois fixed points}  \label{appsec}
In this appendix, we establish a result on Bruhat-Tits group schemes, which is used in the proof of Lemma \ref{lem-chooseGX} to construct very good Hodge embeddings in Case (A).

Let $F$ be a complete discrete valued field with residue characteristic $p=2$. Let $G$ be a connected reductive group over $F$. Denote by $\CB(G,F)$ (resp. $\ol{\CB}(G,F)$) the extended (resp. ``classical") Bruhat-Tits building. Recall that for a finite tame Galois extension $K/F$ with Galois group $\Gamma$, the inclusion $$\CB(G,F)\hookrightarrow \CB(G,K)$$ of buildings identifies the image with the fixed point set $\CB(G,K)^\Gamma$. For $x\in \CB(G,F)$, we use $\CG^K_x$ to denote the Bruhat-Tits group scheme over $\CO_K$ attached to the image of $x$ in $\CB(G,K)$. 

\begin{thm} \label{thm-tamefix}
	Assume $G$ is unramified. Let $\CG=\CG_\bbf$ be the Bruhat-Tits group scheme attached to some facet $\bbf$ in $\CB(G,F)$ whose closure contains a hyperspecial point.
	
	Then there exist a point $x\in \CB(G,F)$ and a finite tame Galois extension $K/F$ with Galois group $\Gamma$ such that $G\otimes_FK$ is split, $\CG=\CG_x$, and (the image of) $x$ is hyperspecial in $\CB(G,K)$. Moreover, we have an isomorphism of (smooth) $\CO_F$-group schemes $$\CG\simeq (\Res_{\CO_K/\CO_F}\CG_x^K)^\Gamma$$ extending the isomorphism $G\simeq (\Res_{K/F}G_K)^\Gamma$.
\end{thm}

\subsection{The proof of Theorem \ref{thm-tamefix}}
We first consider the case when $G$ is split, absolutely simple, and simply connected. Fix a maximal torus $T$ and a Borel subgroup $B$ containing $T$. Let $\Delta=\cbra{\alpha_1,\ldots,\alpha_n}$ be the subset of simple roots with respect to $(T,B)$ in the root system $\Phi=\Phi(T,B)$. Denote by $\Phi^+=\Phi\cap \BZ_{\geq 0}\Delta$ the set of positive roots. Note that there is a perfect pairing $$\pair{-,-}: X_*(T)\times X^*(T)\ra \BZ $$ between the cocharacter group $X_*(T)$ and the character group $X^*(T)$ of $T$. There is an isomorphism between the apartment $\CA$ of $\CB(G,F)$ corresponding to $T$ and $V\coloneqq X_*(T)_\BR$ such that the origin in $V$ corresponds to a special vertex, which is also hyperspecial, in $\CA$. Moreover, a chamber $C$ of $\CA$ is given by \begin{flalign*}
	    C= \cbra{x\in V\ |\ 0<\pair{x,\alpha}<1 \text{\ for all\ }\alpha\in\Phi^+ }.
\end{flalign*}
For $1\leq i\leq n$, denote by ${\omega_i}\in V$ the fundamental coroot corresponding to $\alpha_i\in \Delta$. 
Then the chamber $C$ has $n+1$ vertices $v_0,\ldots,v_n$, where $v_0=0$ and $v_i=\omega_i/c_i$ for $1\leq i\leq n$, where $c_i$ is a positive integer such that $\sum_{i=1}^nc_i\alpha_i$ is the highest root in $\Phi$. Since $G(F)$ acts transitively on the set of chambers in $\CA$ (see, for example, \cite[\S 1.8]{tits1979reductive}), we may assume that $\bbf$ is contained in the closure of $C$. By assumption, the closure $\ol{\bbf}$ of $\bbf$ contains a hyperspecial vertex $v_\bbf$. Note that $v_\bbf$ is some vertex $v_i$ for which $c_i=1$. If $\ol{\bbf}$ consists of only a single point, there is nothing to prove. Hence, we may assume that $\ol{\bbf}$ strictly contains $v_\bbf$.   Let $y\in V$ be the barycenter of the (sub)facet determined by the vertices in $\ol{\bbf}$ except $v_\bbf$. Then $y$ is of the form $$y=\frac{1}{m2^d}y_1,$$ where $m$ is an odd integer, $d\geq 0$ is an integer, and $y_1\in \BZ\Delta$. Set $$x\coloneqq \frac{1}{m2^{d+1}+1}v_\bbf + \frac{m2^{d+1}}{m2^{d+1}+1}y= \frac{1}{m2^{d+1}+1}v_\bbf + \frac{2}{m2^{d+1}+1}y_1$$ 
%\begin{flalign*}
%	    x =\begin{cases}
%	    	  \frac{1}{2m+1}v_\bbf +\frac{2m}{2m+1}y= \frac{1}{2m+1}v_\bbf + \frac{2}{2m+1}y_1 \quad &\text{if $d=0$},
%	    	  \\ \frac{1}{m2^d+1}v_\bbf + \frac{m2^d}{m2^d+1}y= \frac{1}{m2^d+1}v_\bbf + \frac{1}{m2^d+1}y_1    &\text{if $d>0$.}
%	    \end{cases} 
%\end{flalign*}
Then $x$ lies in the line segment between $v_\bbf$ and $y$, and hence in $\bbf$. Since $G$ is simply connected, we have $$\CG_x=\CG_\bbf.$$ Let $F_1$ be a finite extension of $F$ with ramification index $m2^{d+1}+1$. Denote by $$\rho: \CB(G,F)\hookrightarrow \CB(G,F_1),$$ the natural inclusion of buildings. Then we see that \begin{flalign*}
	    \rho(x)= v_\bbf + 2y_1\in v_\bbf+X_*(T).
\end{flalign*}
%\begin{flalign*}
%	    \rho(x) =\begin{cases}
%	    	mv_\bbf +2y_1 \quad &\text{if $d=0$},\\ v_\bbf +
%	    \end{cases}
%\end{flalign*} 
Thus, $\rho(x)$ is a hyperspecial point in $\CB(G,F_1)$. As $p=2$, the extension $F_1/F$ is tame. Let $K$ be the Galois closure of $F_1/F$. Then $K$ is a tame Galois extension of $F$. Note that the image of $\rho(x)$ in $\CB(G,K)$ is also hyperspecial. The pair $(K,x)$ satisfies the conclusion of Theorem \ref{thm-tamefix}. 

Next we consider the case when $G$ is unramified, absolutely simple and simply connected. Let $F_1/F$ be an unramified Galois extension over which $G$ is split. Denote by $\Gamma_1$ the Galois group of $F_1/F$. Then the facets in $\CB(G,F)$ correspond to $\Gamma_1$-invariant facets in $\CB(G,F_1)$. Let $\bbf_1$ be the $\Gamma_1$-invariant facet in $\CB(G,F_1)$ corresponding to $\bbf$. The closure of the facet $\bbf_1$ contains a hyperspecial point, which is the image of $v_\bbf$ in $\CB(G,F_1)$. Let $y_1$ be the barycenter of $\bbf_1$. Then $y_1$ is a fixed point of $\Gamma_1$ and we have $$\CG=(\Res_{\CO_{F_1}/\CO_F}\CG_{y_1}^{F_1})^{\Gamma_1}.$$
Note that $y_1$ is of the form \begin{flalign*}
	   y_1= \frac{1}{m2^d}(v_\bbf + y_2),
\end{flalign*}
where $m$ is odd and $y_2\in X_*(T)$ for a maximal torus $T$ in the split group $G_{F_1}$. 
Since $y_1$ and $v_\bbf$ are fixed by $\Gamma_1$, so is any point in the line segment of $y_1$ and $v_\bbf$.
Set \begin{flalign*}
	   x\coloneqq \frac{1}{m2^{d+1}+1}v_\bbf + \frac{m2^{d+1}}{m2^{d+1}+1}y_1= \frac{3}{m2^{d+1}+1}v_\bbf + \frac{2}{m2^{d+1}+1}y_2.
\end{flalign*}
Then $x$ lies in the line segment between $y_1$ and $v_\bbf$, and hence is fixed by $\Gamma_1$. We obtain that $x$ corresponds to a point in $\CB(G,F)$ and $\CG_x=\CG_\bbf$. 
Let $F_2$ be a finite (tame) extension of $F_1$ with ramification index $m2^{d+1}+1$. Then the image of $x$ in $\CB(G,F_2)$ is of the form $3v_\bbf+2y_2\in 3v_\bbf+X_*(T)$. Since $3v_\bbf$ is hyperspecial, $x$ is hyperspecial in $\CB(G,F_2)$. Let $K$ be the Galois closure of $F_2/F$. Then $K$ is a tame Galois extension of $F$ and the pair $(K,x)$ satisfies the conclusion of Theorem \ref{thm-tamefix}. In particular, Theorem \ref{thm-tamefix} holds when $G$ is unramified, absolutely simple and simply connected. 

Following the proof of \cite[Proposition 2.2.2]{kisin2024integralmodelsshimuravarieties}, we see that Theorem \ref{thm-tamefix} holds when $G$ is any unramified group over $F$.

\printbibliography

%\begin{thebibliography}{99}
%	\bibitem{stack} https://stacks.math.columbia.edu/tag/0CC6
%\end{thebibliography}

\end{document}